\documentclass[reqno,11pt]{amsart}

\usepackage{amsmath,amssymb,amsfonts,amsthm, amscd,indentfirst}
\usepackage{amsmath,latexsym,amssymb,amsmath,
	amscd,amsthm,amsxtra,mathrsfs}
 
\usepackage[top=3cm, bottom=3.5cm, right=2.5cm, left=2.2cm]{geometry}

\usepackage[hyphens]{url} 
\usepackage{hyperref}
\usepackage{bookmark}
\usepackage{amsmath,thmtools,mathtools}
\usepackage{etoolbox}

\allowdisplaybreaks 
\mathtoolsset{showonlyrefs=true}
\DeclareRobustCommand{\textsection}{\ifmmode\mathsection\else\S\fi}

\usepackage[
  backend=biber,
  style=alphabetic,
  sorting=nyt,
  maxbibnames=99,
  giveninits=true
]{biblatex}
\DeclareFieldFormat[article,inproceedings,incollection,unpublished]{title}{#1}
\DeclareFieldFormat[article,inproceedings,incollection,unpublished]{citetitle}{#1}

\newcounter{mparcnt}

\usepackage{fancyhdr}
\usepackage{esint}
\usepackage{enumerate}
\usepackage{xcolor}

\usepackage{pictexwd,dcpic}
\usepackage{graphicx}
\usepackage{caption}

\usepackage{graphicx}
\usepackage{caption}
\usepackage{slashed}

\usepackage{epsfig,here}
\usepackage{subfigure,here}

\newtheorem{theorem}{Theorem}[section]
\newtheorem{lemma}[theorem]{Lemma}
\newtheorem{proposition}[theorem]{Proposition}
\newtheorem{definition}[theorem]{Definition}
\newtheorem{corollary}[theorem]{Corollary}
\newtheorem{remark}[theorem]{Remark}

\newcommand{\abs}[1]{\lvert#1\rvert}
\newcommand{\Abs}[1]{\left\lvert#1\right\rvert}

\newcommand{\norm}[1]{\lVert#1\rVert}

\newcommand{\rd}{{\rm d}}
\newcommand{\rdV}{{\rm dV}}

\newcommand{\rVol}{{\rm Vol}}

\newcommand{\rtr}{{\rm tr}}

\newcommand{\D}{{\slashed{D}}}

\newcommand{\curl}{{\rm curl}\,}
\newcommand{\ip}{\lrcorner\,}
\newcommand{\w}{\wedge}

\def\<{\langle}
\def\>{\rangle}
\def\S{\mathbb{S}}
\def\R{\mathbb{R}}

\newcommand{\ra}{\rightarrow}

\def\SS{{\mathbb S}}

\newcommand{\eq}[1]{\begin{equation}\allowdisplaybreaks\begin{alignedat}{2} #1 \end{alignedat}\end{equation}}

\numberwithin{equation} {section}

\begin{document}
\keywords{Sobolev inequalities, Differential forms, Conformal invariance, Killing forms, Killing vector fields, minimizer}

	
\title[On the sharp constants in curl-Sobolev inequalities on $\mathbb{S}^n$]{On the sharp constants in curl-Sobolev inequalities on $\mathbb{S}^n$}
\date{\today}


\author{Guofang Wang}\address{ Albert-Ludwigs-Universit\"at Freiburg,
Mathematisches Institut,
Ernst-Zermelo-Str. 1,
D-79104 Freiburg, Germany}
\email{guofang.wang@math.uni-freiburg.de}

\author{Mingwei Zhang}\address{ Wuhan University, School of Mathematics and Statistics, 430072 Wuhan, China and 
Albert-Ludwigs-Universit\"at Freiburg,
Mathematisches Institut,
Ernst-Zermelo-Str. 1,
D-79104 Freiburg, Germany}
\email{zhangmwmath@whu.edu.cn}

\begin{abstract}
Let $n\equiv 3\ (\mathrm{mod}\ 4)$ and set $p=\frac{n-1}{2}$. On an oriented Riemannian $n$-manifold we consider the (middle-degree) curl operator, $\curl\coloneqq*\rd:\Omega^{p}\to\Omega^{p}$, and the associated conformally invariant Sobolev quotients on $(\S^n,g_{\mathrm{st}})$,
\[
J_1(\alpha)=\frac{\big(\int|\curl\alpha|^{\frac{2n}{n+1}}\,\rdV\big)^{\frac{n+1}{n}}}{\int\langle\curl\alpha,\alpha\rangle\,\rdV},
\qquad
J_2(\alpha)=\frac{\big(\int|\curl\alpha|^{\frac{2n}{n+1}}\,\rdV\big)^{\frac{n+1}{n}}}{\inf_{\phi}\big(\int|\alpha-\rd\phi|^{\frac{2n}{n-1}}\,\rdV\big)^{\frac{n-1}{n}}}.
\]
Killing $p$-forms and their conformal images form a natural family of critical points for both functionals, analogous to the Aubin--Talenti family in the classical Sobolev inequality. We prove a quantitative local stability estimate for $J_1$ around this family, which in particular implies that every such form is a strict local minimizer in the conformally invariant space $W^{1,\frac{2n}{n+1}}$. In contrast, we show that these critical points are unstable for $J_2$ (and for related conformally invariant quotients), yielding a strict upper bound for the sharp constant of the $J_2$ inequality. By conformal invariance, the results on $\S^n$ transfer naturally to $\R^n$.

\medskip
\noindent{\bf MSC 2020:} 35A23, 46E35, 58A10

\noindent{\bf Keywords:} optimal Sobolev inequalities; curl operator; Killing forms; conformal invariance; stability; minimizers.
\end{abstract}

\maketitle
\tableofcontents

\section{Introduction}\label{sec1}

Let $M$ be an oriented Riemannian manifold of dimension $n\equiv 3\!\mod 4$, and write $\Omega^p$ for the space of smooth $p$-forms on $M$. In the middle degree $p=\frac{n-1}{2}$ we consider the \emph{curl operator}
\eq{
    \curl \coloneqq *\rd: \Omega^{\frac{n-1}{2}}\to \Omega^{\frac{n-1}{2}},
}
where $\rd$ is the exterior derivative and $*$ the Hodge star. The middle-degree restriction is forced by grading: $\rd$ raises degree by one and $*$ identifies $\Omega^{p+1}$ with $\Omega^{n-p-1}$, hence $*\rd$ preserves degree exactly when $p=\frac{n-1}{2}$. 
For $n\equiv 3\!\mod 4$, $\curl$ is real and self-adjoint on $L^2(\Omega^{\frac{n-1}{2}})$. It is, however, non-elliptic: $\ker(\curl\!)$ contains the infinite-dimensional space $\rd\Omega^{\frac{n-3}{2}}$ of exact forms. Despite this, the nonzero spectrum is discrete with finite multiplicities. Moreover, since $\rd^*=\pm *\rd*$,
\eq{
    \curl^2\alpha = (*\rd)(*\rd)\alpha = \pm\,\rd^*\rd\,\alpha,
}
so on co-closed forms ($\rd^*\!\alpha=0$) the square of $\curl$ agrees with the Hodge Laplacian $\Delta=\rd\rd^*+\rd^*\rd$.

In dimension $3$ (so $p=1$), identifying vector fields with dual $1$-forms via the metric gives $(*\rd)(u^\flat)=(\nabla\times u)^\flat$.
This framework ties $\curl$ to classical models in mathematical physics. In Maxwell theory, $\nabla\times E=-\partial_t B$ and $\nabla\times H=J+\partial_t D$, so curl estimates control electromagnetic fields under natural constraints. In fluid dynamics and magnetohydrodynamics, the vorticity and current are $\nabla\times u$ and $\nabla\times B$; eigenfields $\nabla\times u=\lambda u$ (Beltrami fields) describe force-free magnetic fields and arise as steady incompressible Euler flows and in energy--helicity variational principles. Further motivations appear in the final paragraphs of this Introduction.

Since the variational problems considered below are conformally invariant (see Section~\ref{sec2.2}), it is enough to work on the standard sphere $(\S^n,g_{\rm st})$: via stereographic projection, our results may be transported to $\R^n$. We therefore focus on $\S^n$ and study two conformally invariant Sobolev quotients for $\frac{n-1}{2}$-forms:
\eq{
    J_1(\alpha) \coloneqq \frac{ \Big(\int \abs{\curl\alpha}^{\frac{2n}{n+1}} \,\rdV \Big)^{\frac{n+1}{n}} }{ \int \< \curl\alpha, \alpha\> \,\rdV},
}
\eq{
    J_2(\alpha) \coloneqq \frac{\Big(\int \abs{\curl \alpha}^{\frac{2n}{n+1}} \,\rdV\Big)^{\frac{n+1}{n}}}{\inf_{\phi\in\Omega^{\frac{n-3}{2}}} \Big(\int \abs{\alpha - \rd\phi}^{\frac{2n}{n-1}} \,\rdV\Big)^{\frac{n-1}{n}}},
}
together with conformally invariant quotients $J_{p,k}$ defined below.
Here the integration is over $\S^n$ and $\rdV$ is its standard volume element.
The corresponding sharp constants are
\eq{
    S_1 \coloneqq \inf_{\int \< \curl\alpha, \alpha\> >0}J_1(\alpha), \qquad S_2 \coloneqq \inf_{\alpha\not\equiv 0}J_2(\alpha).
}
By the Hodge decomposition (see Section~\ref{sec2}), we may, without loss of generality, restrict attention to the subspace $\{\alpha\mid \rd^*\!\alpha=0\}$.
It is easy to check that $S_1,S_2>0$.
By contrast, proving the existence and characterizing the extremals and determining the sharp values of $S_1$ and $S_2$ are more delicate. For $J_1$
the existence of minimizers was proved already by Rivi\`ere in \cite{Riviere98CAG}, in an interesting connection with $p$-harmonic maps.
For $J_2$ in the case $n=3$, the existence of minimizers was proved by Frank and Loss~\cite{FL22}; their approach extends to all $n\ge 3$. See also the related work~\cite{MS25}. The paper addresses the problem of the sharp constants $S_1$ and $S_2$ and their possible extremals. 

A natural first step toward the sharp constants is to identify distinguished critical points. The Euler--Lagrange equation for $J_1$ is
\eq{\label{eq_Y1}
    \curl (\abs{\curl \alpha}^{-\frac 2 {n+1}} \curl \alpha) =\mu_1 \, \curl \alpha \quad\hbox{for some constant}\ \mu_1>0,
}
or equivalently, setting $\beta \coloneqq \abs{\curl \alpha}^{-\frac{2}{n+1}} \curl \alpha$, we obtain
\eq{ \label{eq_Y1'}
    \curl \beta =\mu_1 \abs{\beta}^{\frac 2{n-1}} \beta.
}
The Euler--Lagrange equation  of $J_2$ can be written as
\eq{\label{eq_Y2}
    \curl (|\curl \alpha |^{-\frac 2 {n+1}} \curl \alpha) =\mu_2 \,\abs{ \alpha - \rd\phi}^{\frac{2} {n-1}} (\alpha - \rd\phi)  \quad\hbox{for some constant}\ \mu_2>0,
}
and, after replacing $\alpha-\rd\phi$ by $\alpha$, equivalently as
\eq{\label{eq_Y2'}
\curl (|\curl \alpha |^{-\frac 2 {n+1}} \curl \alpha) =\mu_2 \,\abs{ \alpha }^{\frac{2} {n-1}} \alpha.
}
In particular, any solution of \eqref{eq_Y1'} is also a solution of \eqref{eq_Y2'}.

A distinguished family of solutions is provided by \emph{Killing forms}; see Definition~\ref{def_Killing} and Proposition~\ref{Killing_eigenform}. On $\S^n$ these are precisely the first (nonzero) curl eigenforms:
\eq{
\curl \xi=\frac {n+1}2 \xi,
}
and they have constant length: $\abs{\xi}\equiv\mathrm{const}$. Consequently, every Killing form solves both \eqref{eq_Y1} and \eqref{eq_Y2'}. 
A key feature of the functionals $J_1$ and $J_2$ is their conformal invariance.
Hence, 
their conformal images, i.e., deformations under conformal transformations, are also solutions. We denote by
\eq{\mathcal{M} :=\{\hbox{Killing forms and their conformal transformations}\}.}
The family $\mathcal M$ is the natural analogue, for the curl operator, of the Aubin--Talenti family for the classical Sobolev inequality on $\S^n$. This leads to the basic question: are elements of $\mathcal M$ optimizers for $J_1$ and/or $J_2$? A positive answer would yield the explicit candidates
$S_1 =\frac {n+1} 2 \omega_n^{1\slash n}$ and $S_2 = \frac {(n+1)^2 }4 \omega_n^{2 \slash n}$.

\medskip
The first part of this paper shows that $\mathcal M$ is locally optimal for $J_1$ in the conformally invariant topology, while the second part shows that  elements in $\mathcal M$ are not  minimizers of $J_2$, in fact, they are unstable. The latter result remains true for other Sobolev-type functionals. See below.

\begin{theorem}\label{thm1.1}
Let $n\equiv 3\! \mod 4$. There exist constants $\delta_0>0$ and $c(n)>0$  such that for any $\alpha\in W^{1, \frac{ 2n}{n+1}}$ with 
with $\int \<\curl \alpha, \alpha \> >0$ and 
\eq{
     \frac{\inf_{\beta\in\mathcal{M}}\Big(\int\abs{\curl(\alpha-\beta)}^{\frac{2n}{n+1}}\Big)^{\frac{n+1}{n}}}{\int\<\curl\alpha,\alpha\>} <\delta_0,
}
we have 
\eq{
    \frac{\Big(\int \Abs{\curl\alpha}^{\frac{2n}{n+1}}\Big)^{\frac{n+1}{n}}}{\int\<\curl\alpha,\alpha\>}
    - \frac{n+1}{2}\omega_{n}^{\frac{1}{n}}
    \geq c(n)\,\frac{\inf_{\beta\in\mathcal{M}}\Big(\int\abs{\curl(\alpha-\beta)}^{\frac{2n}{n+1}}\Big)^{\frac{n+1}{n}}}{\int\<\curl\alpha,\alpha\>}.
}
\end{theorem}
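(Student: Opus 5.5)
Our approach is a Bianchi--Egnell type local stability argument, carried out in the dual variable $\beta=\curl\alpha$. Write $q=\frac{2n}{n+1}$ and $q'=\frac{2n}{n-1}$. Restrict to co-closed $\alpha$; by the Hodge decomposition this changes neither $J_1$ nor the distance. Then $\alpha$ is determined by $\omega:=\curl\alpha$ through $\alpha=\curl^{-1}\omega$, where $\curl^{-1}$ is defined on co-closed forms orthogonal to the (trivial) kernel. The quotient becomes
\eq{
J_1=\frac{\|\omega\|_{q}^2}{\int\<\omega,\curl^{-1}\omega\>},
}
and the distance becomes $\inf_{\beta\in\mathcal M}\|\omega-\curl\beta\|_q^2$. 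Both are invariant under scaling and under conformal transformations. We therefore normalize $\|\omega\|_q=\|\curl\xi\|_q$ and choose $\beta_0\in\mathcal M$ realizing (up to a factor) the infimum. Composing with a conformal map and using invariance, we may assume $\beta_0=\xi$ is a genuine Killing form. For a Killing form, $\curl\xi=\frac{n+1}{2}\xi$ and $|\xi|$ is constant.

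Next we set $\omega=\curl\xi+\rho$ and expand. The numerator gives
\eq{
\|\omega\|_q^2=\|\curl\xi\|_q^2+2\|\curl\xi\|_q^{2-q}\int|\curl\xi|^{q-2}\<\curl\xi,\rho\>+Q_{\rm num}(\rho)+o(\|\rho\|_q^2).
}
Here $Q_{\rm num}$ is the second variation of the $L^q$ norm squared at a constant-length form. This is the delicate point, since $q<2$. We plan to use the elementary pointwise inequality of Figalli--Zhang type for $1<q<2$,
\eq{
|a+b|^q\ge |a|^q+q|a|^{q-2}\<a,b\>+c_q\min\{|a|^{q-2}|b|^2,|b|^q\},
}
together with $|a|=\mathrm{const}$. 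This gives a lower bound with a coercive quadratic part on the region where $|\rho|\lesssim 1$, plus an $L^q$ part. The denominator is exactly quadratic:
\eq{
\int\<\omega,\curl^{-1}\omega\>=\int\<\curl\xi,\xi\>+2\int\<\rho,\xi\>+\int\<\rho,\curl^{-1}\rho\>.
}
Because $\curl\xi=\frac{n+1}{2}\xi$, the first-order terms cancel in $J_1-\frac{n+1}{2}\omega_n^{1/n}$.

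The key step is the spectral analysis of the second variation at $\curl\xi$. Decompose $\rho$ into $\curl$-eigenforms on co-closed $\frac{n-1}{2}$-forms on $\S^n$; the paper recalls that the spectrum is discrete with first positive eigenvalue $\frac{n+1}{2}$, attained by Killing forms. The quadratic form to be controlled is essentially
\eq{
\frac{n+1}{2}\|\rho\|_2^2\ \ (\text{with a }(q-1)\text{ weighting on the component parallel to }\xi)\;-\;\Big(\tfrac{n+1}{2}\Big)^2\int\<\rho,\curl^{-1}\rho\>,
}
up to normalizing constants. The contribution of an eigenform with eigenvalue $\lambda$ is proportional to $1-\frac{n+1}{2\lambda}$. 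This is $\ge c>0$ for $\lambda$ at the second positive eigenvalue and above, and it is positive for all negative $\lambda$. It vanishes exactly on the Killing forms, which correspond to the tangent directions of scalings and rotations within $\mathcal M$. The remaining degenerate directions come from the pointwise reweighting of the component of $\rho$ parallel to $\xi$ (the factor $q-1<1$). These should correspond exactly to infinitesimal conformal transformations of $\xi$. We must check that the kernel of the full second variation equals the tangent space $T_\xi\mathcal M$, i.e.\ nondegeneracy of $\mathcal M$, by computing $\mathcal L_X\xi$ for conformal Killing fields $X$ and matching. Optimality of $\beta_0$ gives orthogonality conditions (in the weighted $L^2$ sense) of $\rho$ to $T_\xi\mathcal M$. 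With these, the quadratic form is bounded below by $c\int\min\{|\rho|^2,|\rho|^q\}\gtrsim \min\{\|\rho\|_q^2,\dots\}$.

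The main obstacle is exactly this step: proving that the mixed quadratic form, which combines a pointwise-weighted $L^2$ norm (weight $q-1$ along $\xi$ and $1$ transversally) with the nonlocal term $\int\<\rho,\curl^{-1}\rho\>$, is coercive modulo $T_\xi\mathcal M$. We would first try to reduce it to an eigenvalue problem. Split $\rho=t\,\curl\xi+\rho^\perp$ pointwise, and use the representation theory of $SO(n+1)$ acting on forms on $\S^n$ to compute how multiplication by $\<\cdot,\xi\>\xi$ acts on curl-eigenspaces. If this is too hard, we would fall back on a compactness/contradiction argument: assume a sequence with ratio $\to0$, normalize $\rho_k$, pass to weak limits, and show that the limit lies in the kernel of the second variation. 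Nondegeneracy then forces it to lie in $T_\xi\mathcal M$, contradicting orthogonality. The non-$L^2$ regime ($|\rho|\gg1$ on small sets) is handled by the $|b|^q$ branch of the pointwise inequality. Combined with the smallness $\delta_0$ and the fact that the denominator term $\int\<\rho,\curl^{-1}\rho\>\lesssim\|\rho\|_q^2$ (by Sobolev for $\curl^{-1}:L^q\to L^{q'}$), this absorbs higher-order errors and yields the stated inequality.
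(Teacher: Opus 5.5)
\textbf{There is a genuine gap: the step you call the main obstacle is the actual content of the theorem, and your proposal does not supply it.} Your overall architecture matches the paper's: reduce by conformal invariance to a normalized $\xi\in E_1$, prove coercivity of the second variation on $T_\xi\mathcal M^\perp$, then upgrade via Figalli--Zhang. But the coercivity step is Theorem~\ref{spectral_gap} in the paper, and you only flag it.

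Your heuristic that an eigenform with eigenvalue $\lambda$ contributes a multiple of $1-\frac{n+1}{2\lambda}$ describes only the unweighted part. In the paper's normalization the form is $G(\varphi)=\frac{2}{n+1}\int\abs{\curl\varphi}^2-\frac{4}{(n+1)^2}\int\<\xi,\curl\varphi\>^2-\int\<\curl\varphi,\varphi\>$. The middle term, which comes from the $(q-1)$-weighting, is not a perturbation:
\begin{itemize}
\item With only the pointwise bound $\<\xi,\varphi\>^2\le\abs{\varphi}^2$, you get a \emph{negative} lower bound on $E_2$.
\item With the same bound, you get zero margin on $E_3$ when $n=3$.
\item Multiplication by $\<\xi,\cdot\>\xi$ does not preserve curl-eigenspaces, so nothing diagonalizes.
\end{itemize}

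Your compactness fallback is circular. It shows that a limit lies in the kernel of the second variation, but to conclude you must already know that this kernel is exactly $T_\xi\mathcal M$. That is not automatic, since $E_2$ is strictly larger than the $(n+1)$-dimensional space of conformal directions it contains. The representation-theoretic route might work in principle, but nothing is computed. Also, a $\beta_0$ realizing the infimum only ``up to a factor'' gives no orthogonality conditions; you need an actual minimizer, which exists when the distance is small.

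The paper supplies three ingredients you are missing.
\begin{enumerate}
\item \emph{Block structure.} An orthonormal basis of $E_1$ trivializes the bundle. The Weitzenb\"ock formula together with $\nabla_X\xi=X\ip*\xi$ gives $\<\xi,\varphi_k\>\in P_{k-1}$ and $\<\xi,\varphi_{-k}\>\in P_{k+1}$. Hence the weighted term couples only $E_{-k}$ with $E_{k+2}$, and $G$ splits into the blocks $E_2$, $E_{-1}\oplus E_3$, and $E_{-k}\oplus E_{k+2}$ for $k\ge2$. The last family is coercive by crude bounds.
\item \emph{Sharp estimate on $E_2$.} One has $\int\<\xi,\varphi_2\>^2\le\frac{n+1}{n+3}\int\abs{\varphi_2}^2$, with equality iff $\varphi_2=\curl(f\xi)$ for some $f\in P_1$. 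This follows from $\int\<\xi,\varphi_2\>^2=-\int\<\nabla\<\xi,\varphi_2\>\ip*\xi,\varphi_2-\<\xi,\varphi_2\>\xi\>$, Cauchy--Schwarz, and the exact identity $\int\abs{\nabla\<\xi,\varphi_2\>\ip*\xi}^2=\frac{n+1}{2}\int\<\xi,\varphi_2\>^2$. Combined with the computation of $\mathcal L_{\nabla f}\xi$, which shows $T_\xi\mathcal M=E_1\oplus Q_\xi$ modulo closed forms, this gives $G\ge0$ on $E_2$ with kernel exactly $Q_\xi$. This confirms your guess about the degenerate directions, which your proposal leaves unproven.
\item \emph{The coupled block.} The bounds $\int\<\xi,\varphi_3\>^2\le\frac{n+1}{n+3}\int\abs{\varphi_3}^2$ and $\int\<\xi,\varphi_{-1}\>^2\le\frac{2}{n+3}\int\abs{\varphi_{-1}}^2$ control $E_{-1}\oplus E_3$.
\end{enumerate}

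Separately, the pointwise inequality you quote, with a small unspecified $c_q$ in front of $\min\{\abs{a}^{q-2}\abs{b}^2,\abs{b}^q\}$, cannot transmit this spectral gap. Since $G$ vanishes exactly on $Q_\xi$, any loss in the Hessian coefficient destroys coercivity. You need Figalli--Zhang's refined inequality, which keeps $(1-\kappa)$ times the exact anisotropic second-order term.
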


We call $\xi\in\mathcal M$ a \emph{strict local minimizer} of $J_1$ (in $W^{1,\frac{2n}{n+1}}$) if $J_1(\alpha)\ge J_1(\xi)$ for all $\alpha$ in a sufficiently small neighborhood of $\xi$, with equality only when $\alpha\in\mathcal M$. Theorem~\ref{thm1.1} yields this local minimality, modulo the conformal symmetries.

\medskip
In contrast, for $J_2$ we obtain an instability result. More generally, following Frank--Loss~\cites{FL22, FL1}, for $k$-forms we consider
\eq{
    J_{p,k}(\alpha) \coloneqq \frac {\big( \int \abs{\curl \alpha}^{p} \,\rdV\big)^{q/p}} {\int \abs{\alpha}^q \,\rdV},\quad\alpha\in\Omega^k, \quad \hbox{with}\quad 1<p<n \quad \hbox{and}\quad q=\frac{np}{n-p}.
}
(See Section~\ref{sec6}.) $J_{p,k}$ is conformally invariant if and only if $p=\frac{n}{k+1}$. In particuar, when $k=\frac{n-1}{2}$ and $p=\frac{2n}{n+1}$ one has $J_{\frac{2n}{n+1},\frac{n-1}{2}}=J_2$.

\begin{theorem}\label{thm1.2}
Each $\xi\in\mathcal{M}$ is not a local minimizer of $J_2=J_{\frac{2n}{n+1},\frac{n-1}{2}}$ in $W^{1, \frac{2n}{n+1}}$.
Moreover, each Killing $\frac{n-1}{2}$-form is not a local minimizer of $J_{p,\frac{n-1}{2}}$ in $W^{1,p}$ for $1<p<n$.
\end{theorem}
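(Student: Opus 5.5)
We show instability through a second variation computation at a Killing form $\xi$ (by conformal invariance of $J_2$ it suffices to treat Killing forms; their conformal images then inherit instability). Write $\lambda=\frac{n+1}{2}$, so $\curl\xi=\lambda\xi$ and $|\xi|\equiv c$ is constant. We consider perturbations $\alpha_t=\xi+t\eta$ with $\eta$ co-closed. We choose $\eta$ so that the competing $\rd\phi$ in the denominator may be taken as $0$ to second order; this holds since $\xi$ is co-closed and the Euler--Lagrange condition for $\phi$ is $\rd^*(|\alpha|^{q-2}\alpha)=0$, which at $\xi$ holds because $|\xi|$ is constant. More simply, we bound the infimum from above by the value at $\phi=0$. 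That gives $J_2(\alpha_t)\le \tilde J(\alpha_t):=(\int|\curl\alpha_t|^{p})^{2/p}/(\int|\alpha_t|^{q})^{2/q}$ with equality at $t=0$. It therefore suffices to find $\eta$ with $\frac{d^2}{dt^2}\tilde J(\alpha_t)|_{t=0}<0$. The first variation vanishes because $\xi$ solves \eqref{eq_Y2'}.

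\textbf{Second variation.} Using constant length, the second variation has three pieces. Expanding $|\xi+t\zeta|^{p}$ with $|\xi|=c$ gives
\[
\int|\xi+t\zeta|^{p}= c^{p}\omega_n+pc^{p-2}t\int\langle\xi,\zeta\rangle+\tfrac p2c^{p-2}t^2\int\big(|\zeta|^2+(p-2)c^{-2}\langle\xi,\zeta\rangle^2\big)+O(t^3).
\]
The same expansion holds for the exponent $q$. Applying this with $\zeta=\curl\eta/\lambda$ (after scaling) in the numerator and $\zeta=\eta$ in the denominator, the second variation is proportional to
\[
Q(\eta)=\frac{1}{\lambda^2}\int\Big(|\curl\eta|^2+(p-2)c^{-2}\langle\xi,\curl\eta\rangle^2\Big)-\int\Big(|\eta|^2+(q-2)c^{-2}\langle\xi,\eta\rangle^2\Big),
\]
plus terms involving $(\int\langle\xi,\eta\rangle)^2$ and $(\int\langle\xi,\curl\eta\rangle)^2$ coming from the products of first-order factors. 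We first restrict to $\eta$ that are $L^2$-orthogonal to $\xi$, which kills those product terms.

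\textbf{Choice of test direction.} The key is $p<2<q$, so the pointwise-angle terms favour negativity. We take $\eta=f\xi$ with $f$ a first spherical harmonic ($\Delta f=nf$, so $\int f=0$ and $\eta\perp\xi$). Then $\curl(f\xi)=f\lambda\xi+*(\rd f\wedge\xi)$. The resulting $Q$ reduces to integrals of $f^2$ and $|\rd f\wedge\xi|^2$, which are computable using the constant-length and Killing identities from Section~\ref{sec2}, together with $\int|\rd f|^2=n\int f^2$. If this particular $\eta$ is not co-closed, we project it off the exact part. Alternatively we note that $\tilde J$ does not require co-closedness, because the infimum bound $J_2\le\tilde J$ holds for any $\alpha$ and $J_2(\xi)=\tilde J(\xi)$. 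We expect this computation to produce a negative value, since the $(q-2)$ term contributes $-(q-2)\int f^2c^{2}$ fully. If the sign fails we try instead $\eta=f\xi'$ with $\xi'$ another Killing form pointwise orthogonal to $\xi$, or we use conformal Killing directions combined with $f$.

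\textbf{General case and main obstacle.} For general $J_{p,\frac{n-1}{2}}$ the same expansion holds with $2/p$ replaced by $q/p$ and $2/q$ by $1$. The angle terms carry the factors $(p-2)$ and $(q-2)$, and $q>p$ again gives a negative direction. The main obstacle is the exact evaluation of $\int|\curl(f\xi)|^2$ and $\int\langle\xi,\curl(f\xi)\rangle^2$. This evaluation needs the pointwise algebraic structure of Killing forms on $\S^n$, namely the identity $\langle\xi,*(\rd f\wedge\xi)\rangle=\pm\langle \rd f,\,\cdot\,\rangle$-type relations. Checking that the resulting constant is strictly negative for every $n\equiv3$ mod $4$ and every $1<p<n$ is the hardest step.
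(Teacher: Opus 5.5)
\textbf{There is a genuine gap: the comparison with $\tilde J$ runs the wrong way, and your test direction is degenerate for $J_2$.}

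The denominator of $J_2$ is an infimum over $\phi$. Evaluating it at $\phi=0$ bounds it from \emph{above}, which bounds $J_2$ from \emph{below}: $J_2(\alpha)\ge\tilde J(\alpha)$, with equality at $\xi$. So a negative second variation of $\tilde J$ at $\xi$ says nothing about $J_2$.

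In fact $\tilde J$ is trivially not minimized at $\xi$. For any exact $\rd\phi\neq0$, strict convexity of $\abs{\cdot}^q$, $\abs{\xi}\equiv c$ and $\int\langle\xi,\rd\phi\rangle=0$ give $\int\abs{\xi+\rd\phi}^q>\int\abs{\xi}^q$, so $\tilde J(\xi+\rd\phi)<\tilde J(\xi)$. The gauge infimum in $J_2$ is there precisely to remove this.

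Your other justification, that $\phi=0$ stays optimal ``to second order'', is also false. It is optimal at $t=0$, but for $\alpha_t=\xi+t\eta$ the optimal $\rd\phi_t$ is of order $t$. It lowers the denominator at order $t^2$ unless $\rd^*\big(\eta+(q-2)\langle\xi,\eta\rangle\xi\big)=0$. Co-closedness of $\eta$ does not give this: for $\eta=f\xi$ the left side is $-(q-1)\nabla f\ip\xi\neq0$.

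Your direction is moreover a null direction for $J_2$. By the computation in Lemma~\ref{tangent_space}, for $f\in P_1$
\[
f\xi=\tfrac{2}{n+1}\big[(f\xi+\nabla f\ip*\xi)-\mathcal L_{\nabla f}\xi\big].
\]
Here $f\xi+\nabla f\ip*\xi$ is closed, hence exact, and $\mathcal L_{\nabla f}\xi$ is tangent to $\mathcal M$. Since $J_2$ is invariant under exact perturbations and constant on the critical family $\mathcal M$, we get $J_2(\xi+tf\xi)=J_2\big(\xi-\tfrac{2t}{n+1}\mathcal L_{\nabla f}\xi\big)=J_2(\xi)+O(t^3)$.

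The computation you planned does produce a negative number. With $\abs{\xi}=1$ and $\lambda=\frac{n+1}{2}$ one finds $\int\abs{\curl(f\xi)}^2=\lambda(\lambda+1)\int f^2$. The second variation of $\tilde J$ is then a positive multiple of $\lambda\big(1-(q-p)\lambda\big)\int f^2<0$. But all of that negativity comes from the exact component.

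Projecting onto co-closed forms gives $\frac{2}{n+3}\Phi_\xi(f)\in Q_\xi$. This is still tangent to $\mathcal M$ modulo exact forms, so it does not help. For the un-gauged $J_{p,\frac{n-1}{2}}$ the same expression is even positive for $p$ near $1$ once $n\ge7$, since $q-p=\frac{p^2}{n-p}$.

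\textbf{What is missing} is a genuine lower bound for the gauge infimum, together with a direction in which the eigenvalue terms cancel. Then only the angle terms survive, with net coefficient $p-q<0$. The paper gets both from $\alpha=\xi+t\varphi_{-1}$ with $\varphi_{-1}\in E_{-1}$ and $\langle\xi,\varphi_{-1}\rangle\not\equiv0$ (Proposition~\ref{prop5.1}):
\begin{itemize}
\item Since $\alpha$ is co-closed, it is $L^2$-orthogonal to exact forms, so H\"older gives $\inf_\phi\big(\int\abs{\alpha-\rd\phi}^q\big)^{2/q}\ge\omega_n^{-1/n}\int\abs{\alpha}^2$.
\item H\"older also gives $\big(\int\abs{\curl\alpha}^p\big)^{2/p}\le\omega_n^{1/n}\lambda^2\int\abs{\xi-t\varphi_{-1}}^2$.
\item The integrals $\int\abs{\xi\pm t\varphi_{-1}}^2$ coincide, so $J_2(\alpha)\le J_2(\xi)$.
\item The inequality is strict because $\langle\xi,\varphi_{-1}\rangle\in P_2$ is nonconstant.
\end{itemize}
For the un-gauged quotient, the same $\varphi_{-1}$ makes the second-variation bracket exactly $(p-q)\lambda^2\int\langle\xi,\varphi_{-1}\rangle^2<0$ (Proposition~\ref{prop6.9}).
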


\begin{theorem}\label{thm1.3}
Each $\xi\in\mathcal{M}$ is not a local minimizer of $J_{\frac{n}{2},1}$ in $W^{1, \frac n 2}$. \end{theorem}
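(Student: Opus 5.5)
The plan is to exhibit, for every $\xi\in\mathcal M$, an explicit one-parameter perturbation $\alpha_t\to\xi$ in $W^{1,\frac n2}$ along which $J_{\frac n2,1}$ strictly decreases. The key observation is that for $k=1$ and $p=\frac n2$ one has $q=\frac{np}{n-p}=n$, and the quotient
\[
J_{\frac n2,1}(\alpha)=\frac{\big(\int\abs{\curl\alpha}^{\frac n2}\big)^{2}}{\int\abs{\alpha}^{n}}
\]
does not quotient out exact forms in the denominator, unlike $J_2$. I will therefore perturb in exact directions, $\alpha_t=\xi+t\,\rd\phi$ with $\phi\in C^\infty(\S^n)$ nonconstant. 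Since $\curl\rd\phi=*\rd\rd\phi=0$, the numerator is unchanged, and only the denominator needs to be analysed.

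The first step is to use criticality. Each $\xi\in\mathcal M$ is a critical point of $J_{\frac n2,1}$. For a Killing $1$-form this is part of the assumed structure of $\mathcal M$, as in Proposition~\ref{Killing_eigenform}, and for its conformal images it follows from conformal invariance. The weak Euler--Lagrange equation reads
\[
\int\big\<\abs{\curl\xi}^{\frac n2-2}\curl\xi,\curl\eta\big\>=\mu\int\abs{\xi}^{n-2}\<\xi,\eta\>
\]
for all test $1$-forms $\eta$, with $\mu>0$; the sign of $\mu$ follows by testing with $\eta=\xi$. Taking $\eta=\rd\phi$ makes the left side vanish, hence
\[
\int\abs{\xi}^{n-2}\<\xi,\rd\phi\>\,\rdV=0 .
\]
For an honest Killing form this is also immediate, since $\abs{\xi}$ is constant and $\rd^*\xi=0$. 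Working from the Euler--Lagrange equation instead avoids computing the conformal pull-backs explicitly.

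The second step uses the strict convexity of $v\mapsto\abs v^n$ pointwise:
\[
\abs{\xi+t\rd\phi}^n\ \ge\ \abs{\xi}^n+nt\,\abs{\xi}^{n-2}\<\xi,\rd\phi\>,
\]
with equality only where $t\,\rd\phi=0$. Integrating and using the orthogonality from the first step gives $\int\abs{\xi+t\rd\phi}^n>\int\abs\xi^n$ for every $t\ne0$, because $\rd\phi\not\equiv0$ on an open set. Hence $J_{\frac n2,1}(\xi+t\rd\phi)<J_{\frac n2,1}(\xi)$ for all $t\neq0$. Moreover $\norm{\alpha_t-\xi}_{W^{1,\frac n2}}=\abs t\,\norm{\rd\phi}_{W^{1,\frac n2}}\to0$, so $\xi$ is not a local minimizer. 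A second-order expansion gives the quantitative version
\[
\int\abs{\xi}^{n-2}\Big(\abs{\rd\phi}^2+(n-2)\frac{\<\xi,\rd\phi\>^2}{\abs\xi^2}\Big)\,\frac{n t^2}{2}+o(t^2)
\]
for the gain in the denominator, which is strictly positive.

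The main obstacle is the possibility that $J_{p,k}$ in Section~\ref{sec6} is meant only on co-closed forms, or modulo exact forms as in $J_2$; in that case the trivial exact direction is excluded. My fallback is then the same strategy used for Theorem~\ref{thm1.2}. Reduce to a Killing form by conformal invariance, normalised so that $\abs\xi$ and $\abs{\rd\xi}$ are constant, as for the Hopf-type field with $A^2=-I$. Then compute the second variation of $J_{\frac n2,1}$ at $\xi$ in co-closed directions $\eta$ built from eigenforms of the Hodge Laplacian. The $p$-Laplacian-type numerator contributes the anisotropic term $(\frac n2-2)\<\curl\xi,\curl\eta\>^2/\abs{\curl\xi}^2$, and the denominator contributes $(n-2)\<\xi,\eta\>^2/\abs\xi^2$. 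I would try $\eta$ pointwise orthogonal to $\xi$ and with $\curl\eta\perp\curl\xi$, for instance $\eta=f\,\zeta$ with $\zeta$ another Killing field anticommuting with $\xi$ and $f$ a first spherical harmonic, projected onto co-closed forms. For such $\eta$ the numerator gains only the isotropic term, while the denominator gains the factor $(n-2)$ from the parallel part. Checking that this combination yields a negative second variation is the computational heart of the fallback.
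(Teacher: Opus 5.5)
Your main argument works only under a reading of $J_{\frac n2,1}$ that makes the theorem empty. Theorem~\ref{thm1.3} is about the Frank--Loss problem \eqref{A1}. There the denominator is effectively minimized over gauges $\alpha\mapsto\alpha-\rd\phi$ (the constraint $\rd^*(\abs{\alpha}^{n-2}\alpha)=0$), exactly as in $J_2$, which Theorem~\ref{thm1.2} identifies with $J_{\frac{2n}{n+1},\frac{n-1}{2}}$. The purpose of the theorem is to refute the conjecture that $\xi$ attains $S_3$. For that functional, $\alpha\mapsto\alpha+t\,\rd\phi$ is an exact invariance, so your perturbation gives no decrease at all. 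If you read \eqref{L_pq} literally, your convexity computation is correct but proves too much. Letting $t\to\infty$ gives $J_{\frac n2,1}(\xi+t\,\rd\phi)\to0$, so the literal quotient has infimum $0$ and no form at all is a local minimizer. Everything therefore rests on your fallback, which you have not carried out. Also note that for $1$-forms criticality comes from Lemma~\ref{lem6.1}, not Proposition~\ref{Killing_eigenform}, and it holds only for constant-length Killing $1$-forms.

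The direction you propose in the fallback cannot produce a negative second variation. Put $a=\abs{\curl\xi}$ and $b=\abs{\xi}$, so $a^2/b^2=2(n-1)$ by Lemma~\ref{lem6.1}. Take a co-closed $\eta$ with $\<\xi,\eta\>\equiv0$ and $\<\curl\xi,\curl\eta\>\equiv0$. In the second-variation formula of Proposition~\ref{prop6.8}, the mean term and both anisotropic terms then vanish. What remains is a positive multiple of $\int\abs{\curl\eta}^2-2(n-1)\int\abs{\eta}^2$, which is $\ge0$ because $2(n-1)$ is the lowest eigenvalue of $\Delta$ on co-closed $1$-forms. Gauge minimization can only raise the second variation further.

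Your remark that the denominator gains the factor $(n-2)$ from the parallel part contradicts pointwise orthogonality. The only negative term, $-(n-2)a^nb^{n-4}\int\<\xi,\eta\>^2$, needs $\<\xi,\eta\>\not\equiv0$.

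The missing idea is the paper's choice of $\eta$: a second constant-length Killing $1$-form (Remark~\ref{rmk6.7}) with $\int\<\xi,\eta\>=0$ but $\<\xi,\eta\>\not\equiv0$.
\begin{itemize}
\item The isotropic terms then cancel exactly, since both forms have $\abs{\curl\cdot}=a$ and $\abs{\cdot}=b$.
\item The pointwise identity $\<\curl\xi,\curl\eta\>=-4\<\xi,\eta\>$ (Corollary~\ref{coro6.7}, via Lemmas~\ref{lem6.4}--\ref{lem6.6}) turns the numerator's anisotropic term into $16(\frac n2-2)a^{n-4}b^n\int\<\xi,\eta\>^2$.
\item The denominator's term strictly dominates this, giving a value proportional to $\frac{2(n-4)}{n-1}-(n-1)(n-2)<0$.
\end{itemize}
This direction also survives the gauge minimization to second order. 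Indeed $\eta$ is co-closed, and $\rd^*(\<\xi,\eta\>\xi)=-\xi^\sharp(\<\xi,\eta\>)=0$ for a constant-length Killing field $\xi$ and any Killing $\eta$. So the optimal gauge correction along $\xi+t\eta$ does not enter at second order.
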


In fact, in the proof for both Theorems~\ref{thm1.2} and~\ref{thm1.3} we show that $\xi\in\mathcal{M}$ is an unstable critical point; it would be interesting to determine its Morse index. We believe that each 
$\xi \in \mathcal M$ is also not a local minimizer for $J_{\frac n{k*1}, k},$  when  $2\le k\le n-1$. 

\medskip
The case $n=3$ is of particular importance. It was asked in \cite{Frank_Loss_2024} and more recently in \cite{MS25} whether
\eq{\label{conjecture_S2}
    S_2=\frac {(n+1)^2 }4 \omega_n^{\frac{2}{n}}.
}
Theorem~\ref{thm1.2} gives a negative answer.

For $J_1$ in dimension $3$, it was proved in \cite{EGP25} that Killing forms are local minimizers in the $C^0$ topology. Our result differs in two essential ways: we work in the conformally invariant space $W^{1,\frac{2n}{n+1}}$, and we obtain local stability in a neighborhood of the full conformal family $\mathcal M$ (which is larger than the set of Killing forms). In particular, due to conformal invariance, our results hold also in $\R^n$. We also note that \cite{EGP25} is phrased in terms of an equivalent problem on optimal metrics for the first curl eigenvalue; see also Appendix~\ref{secB}. The computations given in \cite{EGP25} are very difficult to be generalized to higher dimensions. We recently realized that it was already proved by Rivi\`ere in \cite{Riviere98CAG} that the Killing forms are local optimizers in $C^1$.

At present we cannot prove or disprove that elements of $\mathcal M$ are \emph{global} minimizers of $J_1$; this remains a challenging open problem. If true, Theorem~\ref{thm1.1} would yield a quantitative stability inequality in the spirit of \cite{WZ25}.\footnote{After the completion of the present manuscript, we succeeded to solve this problem in a subsequent preprint, namely we confirmed that elements of $\mathcal M$ are \emph{global} minimizers of $J_1$, hence the quantitative stability follows.}

Finally, it is useful to compare $\frac{n-1}{2}$-forms with spinor fields: Killing $\frac{n-1}{2}$-forms play a role analogous to Killing spinors, and the curl operator shares several features with the Dirac operator. Our approach is motivated by methods developed for spinors in \cite{WZ25}.

Before ending the introduction, we give  three more concrete geometric 
motivations for the study of  optimal Sobolev inequalities for differential forms.

\medskip
\noindent{\it Motivation from zero modes.} In dimension $3$, a magnetic potential can be modeled by a vector field $A$ and its magnetic field by $\rd A^\flat$ (equivalently $\nabla\times A$). Gauge invariance corresponds to the freedom to replace $A$ by $A+\nabla\phi$, which leaves $\rd A^\flat$ unchanged. The existence of zero modes for Pauli/Dirac operators in magnetic fields, beginning with the construction of Loss--Yau \cite{Loss_Yau_86}, is intimately connected to borderline (critical) estimates that control a potential $A$ modulo gauge in terms of its curl.

More precisely, in the critical regime one seeks sharp, conformally invariant inequalities of the schematic form
\eq{
    \inf_{\phi}\,\|A-\nabla\phi\|_{L^{\frac{2n}{n-1}}}\ \leq\ C\,\|\nabla\times A\|_{L^{\frac{2n}{n+1}}},
}
whose best constant controls how small a gauge-fixed potential can be relative to its magnetic field. Frank--Loss \cite{Frank_Loss_2024} made this link explicit and related the possibility of zero modes to sharp curl inequalities after optimizing over exact perturbations. The functional $J_2$ is tailored to this structure: its denominator precisely performs the gauge optimization $\alpha\mapsto \alpha-\rd\phi$, and the corresponding sharp constant encodes the optimal critical control of the potential by its curl.

\medskip

\noindent{\it Motivation from the first curl eigenvalue.} In dimension $3$, identifying vector fields with $1$-forms, the restriction of $\curl$ to divergence-free fields (equivalently, co-closed $1$-forms) is self-adjoint on $L^2$. Its first nonzero eigenvalue $\lambda_1$ is characterized variationally by a Rayleigh quotient and governs the ``simplest'' nontrivial stationary solutions of Euler: eigenfields
\eq{
    \nabla\times u = \lambda u
}
are Beltrami fields, i.e., critical points of the energy under a helicity constraint (and, in suitable settings, candidates for stable steady flows). This makes $\lambda_1$ a natural spectral quantity in hydrodynamic stability and in variational principles in magnetohydrodynamics.

From a geometric viewpoint one can then ask how $\lambda_1$ depends on the underlying geometry: for instance, how it changes under deformation of the metric, or under changes of the domain subject to volume or convexity constraints. Such ``optimal eigenvalue'' problems for curl have been investigated in depth, e.g. on optimal metrics and optimal convex domains in \cites{EGP25,EGP22}, as well as in related nonexistence results \cite{EP20}. See also Appendix B. The quotients $J_1$ and $J_2$ may be viewed as conformally invariant, form-valued analogues of these spectral problems: rather than fixing volume and minimizing a quadratic Rayleigh quotient, one works at the critical Sobolev scaling, which is the natural scale for conformal geometry.

\medskip
\noindent{\it Motivation from $p$-harmonic maps.} Curl--Sobolev-type inequalities enter naturally in the variational theory of $p$-harmonic maps $u:\S^3\to\S^2$. Following Rivi\`ere \cite{Riviere98CAG}, one encodes the Jacobian-type nonlinearity of $u$ by the closed $2$-form
\eq{
    u^*\omega_{\S^2} = \rd\alpha,
}
where $\omega_{\S^2}$ is the area form on $\S^2$, and $\alpha$ is the potential $1$-form. The elementary pointwise estimate
\eq{\label{eq_pointwise_F_du}
    \abs{\rd\alpha}=\abs{u^*\omega_{\S^2}}\ \leq\ \frac12\,\abs{\rd u}^2
}
implies, for any $q>1$, the integral bound $\int_{\S^3}\abs{\rd\alpha}^q\le 2^{-q}\int_{\S^3}\abs{\rd u}^{2q}$ (cf. \cite[(1.11)]{Riviere98CAG}). Equality in \eqref{eq_pointwise_F_du} characterizes transversally conformal maps and is attained by the Hopf fibration.

It is well known that when $n=3$, the denominator $\int_{\S^3}\<\curl\alpha,\alpha\>$ of $J_1$ is a homotopy invariant (the Hopf invariant). Therefore, to minimize the $p$-energy of $u$ in a given homotopy class, one can instead minimize the functional $J_1$ of $\alpha$, hence related to the optimal Sobolev inequality for differential forms.

Rivi\`ere further introduced a conformally invariant constrained variational problem for the potentials (imposing a Coulomb gauge to obtain coercivity). In that framework, the first eigenforms of the linear operator $\Delta^{1/2}$ (equivalently, of $\curl$ acting on co-closed $1$-forms) arise as stable critical points of the associated nonlinear functional. In particular, one can deduce that the Hopf fibration minimizes the $p$-energy in its homotopy class for $p\ge 4$, and remains locally minimizing for $3\le p<4$ \cite{Riviere98CAG}. This provides a geometric reason why sharp, gauge-optimized curl inequalities and their extremals are natural tools in the fine analysis of $p$-harmonic maps. In the same vein, our variational quantities are also closely related to the Faddeev--Skyrme model on $\S^3$, whose energy involves both the Dirichlet term $|\rd u|^2$ and the ``Skyrme term'' $|u^*\omega_{\S^2}|^2$ built from the same $2$-form as above.

\medskip

\noindent{\it Organization of the rest of the paper.} Section~2 collects preliminaries on exterior calculus, the curl operator and Killing forms. In Section~3 we prove sharp estimates for curl eigenforms, which are the key input for the stability analysis. Section~4 proves Theorem~\ref{thm1.1}. Section~5 proves the conjecture \eqref{conjecture_S2} is not true. Section~6 discusses the generalized quotients and proves Theorems~\ref{thm1.2} and~\ref{thm1.3}. Appendix~\ref{secA} contains a classification of curl eigenforms, and Appendix~\ref{secB} introduces a related conformal invariant and the problem of optimal metrics. 
In Appendix \ref{secC} we give a quantitative lower bound of $S_1$ ($n=3$) by using the optimal Sobolev type inequality for spinors.

\section{Preliminaries}\label{sec2}

\subsection{Exterior calculus}

Let $(M,g)$ be an oriented Riemannian manifold of dimension $n\equiv 3\!\mod 4$. For each $0\le p\le n$, we write $\Omega^p$ for the space of smooth $p$-forms and endow it with the $L^2$ inner product induced by $g$. The exterior differential is denoted by $\rd: \Omega^p\to \Omega^{p+1}$, where by convention $\Omega^{n+1}=0$. Fix the orientation determined by the volume form $\rdV$. The Hodge star operator $*:\Omega^p\to \Omega^{n-p}$ is characterized by
\eq{
    \alpha\w*\beta = \<\alpha, \beta\>\,\rdV.
}
The codifferential (the $L^2$-adjoint of $\rd$) is
\eq{
    \rd^*\coloneqq (-1)^{n(p+1)+1}*\rd*:\Omega^p\ra\Omega^{p-1}.
}
Since $n\equiv 3\!\mod 4$, we have $\rd^*=-*\rd*$ on $\Omega^{\frac{n-1}{2}}$. We recall the standard identities
\eq{
    *^2 = (-1)^{p(n-p)}\,{\rm id},\qquad \rd^2 = (\rd^*)^2=0.
}
In particular, throughout this paper $*^2 = {\rm id}$.

The Hodge Laplacian is defined by
\eq{
    \Delta \coloneqq (\rd+\rd^*)^2 = \rd\rd^* + \rd^*\rd: \Omega^p\ra\Omega^p
}
and the Bochner (or rough) Laplacian by
\eq{
    \nabla^*\nabla \coloneqq -\rtr\nabla^2: \Omega^p\ra\Omega^p.
}
They are related by the Weitzenb\"ock formula
\eq{
    \Delta = \nabla^*\nabla + \sum_{i,j} e_j^\flat \w e_i \ip R(e_i, e_j),
}
where $\{e_i\}$ is a local orthonormal frame, $e_i^\flat$ are the dual $1$-forms, $\ip$ denotes interior multiplication, and $R$ is the curvature tensor $R(X,Y)=\nabla_X\nabla_Y-\nabla_Y\nabla_X-\nabla_{[X,Y]}$. On the standard sphere $\SS^n$, acting on $p$-forms, this reduces to
\eq{\label{sphere_Weitzenbock}
    \Delta = \nabla^*\nabla + p(n-p).
}

If $M$ is closed, the Hodge decomposition gives
\eq{
    \Omega^p = {\rm ker}(\Delta)\oplus \rd\Omega^{p-1}\oplus \rd^*\Omega^{p+1},
}
and hence
\eq{
    {\rm ker}(\rd) = {\rm ker}(\Delta)\oplus \rd\Omega^{p-1},\qquad {\rm ker}(\rd^*) = {\rm ker}(\Delta)\oplus \rd^*\Omega^{p+1}.
}
Since $H^p(\S^n)=0$ for $0<p<n$, we obtain in particular on $\S^n$
\eq{\label{sphere_Hodge}
    \Omega^{\frac{n-1}{2}} = \rd\Omega^{\frac{n-3}{2}}\oplus \rd^*\Omega^{\frac{n+1}{2}},\quad {\rm ker}(\rd) = \rd\Omega^{\frac{n-3}{2}},\quad {\rm ker}(\rd^*) = \rd^*\Omega^{\frac{n+1}{2}}.
}
We next record some basic identities that will be used repeatedly.

\begin{proposition}\label{basic_form}
Let $(M,g)$ be an oriented closed Riemannian manifold of dimension $n\equiv 3\!\mod 4$. Then for any $\alpha,\beta\in\Omega^{\frac{n-1}{2}}$ and $X\in\Gamma(TM)$ we have
\begin{enumerate}
    \item $\abs{X\ip\alpha}^2+\abs{X^\flat\w\alpha}^2=\abs{X}^2\abs{\alpha}^2$ (Pythagoras theorem);
    \item $X\ip*\!\alpha=-*(X^\flat\w\alpha)$ (interior product dual to exterior product);
    \item $\<X\ip*\!\alpha,\beta\>=-\<\alpha, X\ip*\beta\>$ (anti-symmetric), and as a consequence, $\<X\ip*\!\alpha,\alpha\>=0$;
    \item $\rd = \sum_i e_i^\flat\w\nabla_{e_i}$,\quad $\rd^*=-\sum_i e_i\ip\nabla_{e_i}$ (local expression);
    \item $\mathcal{L}_X\alpha = \rd (X\ip\alpha) + X\ip\rd\alpha$ (Cartan’s magic formula);
    \item $\sum_i e_i^\flat\w e_i\ip\alpha = \frac{n-1}{2}\alpha$,\quad $\sum_i e_i\ip e_i^\flat\w\alpha = \frac{n+1}{2}\alpha$ (counting formula);
    \item $e_i^\flat\w e_j\ip\alpha + e_j\ip e_i^\flat\w\alpha = \delta_{ij}\alpha$ (Clifford multiplication).
  \end{enumerate}
More generally, for $\alpha,\beta\in\Omega^p$ the above formulas remain valid, with the following modifications:
\begin{enumerate}
    \item[(2)] $X\ip*\!\alpha=(-1)^p*(X^\flat\w\alpha)$; 
    \item[(3)] $\<X\ip*\!\alpha,\beta\>=(-1)^p\<\alpha, X\ip*\beta\>$; 
    \item[(6)] $\sum_i e_i^\flat\w e_i\ip\alpha = p\alpha$, \quad $\sum_i e_i\ip e_i^\flat\w\alpha = (n-p)\alpha$.
\end{enumerate}
As a direct consequence, we have
\eq{\label{direct_dual}
    \<X\ip\alpha,\beta\> = \<\alpha,X^\flat\w\beta\>, \quad\forall \alpha\in\Omega^p,\ \beta\in\Omega^{p-1}.
}
\end{proposition}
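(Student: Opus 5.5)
Every item in the statement is pointwise and linear, so the plan is to check it at a single point $x$ in an oriented orthonormal frame $\{e_i\}$ with dual coframe $\{e^i\}$, working on basis monomials $e^I=e^{i_1}\w\cdots\w e^{i_p}$. The general $p$-form versions are proved first; the middle-degree statements then follow by setting $p=\frac{n-1}{2}$. In that case $(-1)^p=-1$, because $n\equiv 3\pmod 4$ forces $p$ to be odd.

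The counting and Clifford identities (6), (7) come first. For a monomial $e^I$, the operator $e^i\w e_i\ip$ returns $e^I$ exactly when $i\in I$ and kills it otherwise. Summing over $i$ gives $p\,e^I$. Likewise $e_i\ip(e^i\w\cdot)$ returns $e^I$ exactly when $i\notin I$, which gives $(n-p)e^I$. For (7), I would use that $e_j\ip$ is an antiderivation. This yields $e_j\ip(e^i\w\alpha)=\delta_{ij}\alpha-e^i\w(e_j\ip\alpha)$, which is (7). Taking $i=j$ and summing recovers (6) as a consistency check.

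For (1), I would choose the frame so that $e_1=X/\abs{X}$. Splitting $\alpha=e^1\w\alpha_1+\alpha_2$ with $\alpha_1,\alpha_2$ free of $e^1$ gives $X\ip\alpha=\abs{X}\alpha_1$ and $X^\flat\w\alpha=\abs{X}e^1\w\alpha_2$. Since $\abs{\alpha}^2=\abs{\alpha_1}^2+\abs{\alpha_2}^2$, the Pythagoras identity follows.

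The adjunction \eqref{direct_dual} is the standard duality between $\ip$ and $\w$; it can be checked on monomials, or read off from (7) by pairing. For (2), I would start from the defining relation $\gamma\w*\beta=\<\gamma,\beta\>\rdV$ and test against an arbitrary $(n-p-1)$-form $\gamma$. The key step is that contracting the $n$-form $\gamma\w*\alpha$ with $X$ gives zero after wedging with $X^\flat$. Equivalently, one can use $\<\gamma,X\ip*\alpha\>=\<X^\flat\w\gamma,*\alpha\>$ together with $*$ being an isometry and $*^2=(-1)^{p(n-p)}$. Tracking the signs yields $X\ip*\alpha=(-1)^p*(X^\flat\w\alpha)$.

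The safest way to fix the sign in (2) is to verify it directly on $X=e_1$ with $\alpha=e^I$ in the two cases $1\in I$ and $1\notin I$. Item (3) is then immediate:
\[
\<X\ip*\alpha,\beta\>=(-1)^p\<*(X^\flat\w\alpha),\beta\>=(-1)^p\<X^\flat\w\alpha,*\beta\>=(-1)^p\<\alpha,X\ip*\beta\>,
\]
using that $*$ is a self-adjoint isometry in this degree (since $*^2=\mathrm{id}$) together with \eqref{direct_dual}. When $p$ is odd the pairing is antisymmetric, so $\<X\ip*\alpha,\alpha\>=0$.

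Items (4) and (5) are standard. For (4), $\rd=\sum_i e^i\w\nabla_{e_i}$ holds because the Levi-Civita connection is torsion-free; the formula for $\rd^*$ follows by taking $L^2$-adjoints, using the divergence theorem and \eqref{direct_dual}. Item (5) is Cartan's formula. The only real obstacle is the sign bookkeeping in (2), which I would settle by the explicit monomial check described above.
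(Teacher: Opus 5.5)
Your proposal is correct. The paper gives no proof of Proposition~\ref{basic_form}; it records these as standard identities, so there is no argument of the authors' to compare against. Your pointwise verification in an oriented orthonormal frame on basis monomials $e^I$ is the natural way to supply what they leave implicit. Your signs are right: $X\ip*\alpha=(-1)^p*(X^\flat\w\alpha)$ holds in any dimension, and for $p=\frac{n-1}{2}$ with $n\equiv 3\pmod 4$ the degree $p$ is odd, which gives the minus signs in (2) and (3). For (3), the general-$p$ version is only non-vacuous when $n=2p+1$, since otherwise the two sides pair forms of different degrees. That is exactly the case where your step $\<*\gamma,\beta\>=\<\gamma,*\beta\>$ is justified.

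One sentence should be repaired, although your explicit monomial check already makes it inessential. The ``key step'' you describe for (2) does not parse as written. For $\gamma\in\Omega^{n-p-1}$ the form $\gamma\w*\alpha$ has degree $2n-2p-1$, not $n$ in general. Moreover, for a genuine $n$-form $\omega$, item (7) gives $X^\flat\w(X\ip\omega)=\abs{X}^2\omega$, which is not zero.

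The clean version of that idea is to test against $\beta\in\Omega^{p+1}$ and contract the vanishing $(n+1)$-form $\beta\w*\alpha$ with $X$:
\[
0=X\ip(\beta\w*\alpha)=(X\ip\beta)\w*\alpha+(-1)^{p+1}\beta\w(X\ip*\alpha).
\]
Applying the defining relation of $*$ and the adjunction $\<X\ip\beta,\alpha\>=\<\beta,X^\flat\w\alpha\>$, this gives
\[
\beta\w(X\ip*\alpha)=(-1)^p\<X\ip\beta,\alpha\>\rdV=(-1)^p\<\beta,X^\flat\w\alpha\>\rdV=(-1)^p\,\beta\w*(X^\flat\w\alpha)
\]
for every $\beta\in\Omega^{p+1}$. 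Nondegeneracy of the wedge pairing $\Omega^{p+1}\times\Omega^{n-p-1}\to\Omega^n$ then yields (2) directly, with no separate sign bookkeeping for $*^2$.
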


\subsection{The curl operator}\label{sec2.2}

In this subsection we discuss the conformal covariance of the curl operator and recall its spectrum on $\S^n$. For $p=\frac{n-1}{2}$ we define
\eq{
    \curl = *\rd: \Omega^{\frac{n-1}{2}}\ra \Omega^{\frac{n-1}{2}}.
}
Let $\tilde g=u^2g$ for some smooth function $u>0$. The standard conformal transformation laws are
\eq{\label{conformal_change}
    *_{\tilde{g}} = u^{n-2p} *_{g}, \quad \<\cdot,\cdot\>_{\tilde{g}} = u^{-2p}\<\cdot,\cdot\>_g \quad\hbox{on}\ \Omega^p.
}
In particular, for any $\alpha\in\Omega^{\frac{n-1}{2}}$,
\eq{
    \curl_{\tilde{g}}\alpha = *_{\tilde{g}}\rd\alpha = u^{-1}\,\curl_g\alpha.
}
Together with \eqref{conformal_change} this yields
\eq{
    \abs{\curl_{\tilde{g}}\alpha}_{\tilde{g}}^{\frac{2n}{n+1}} = u^{-n}\abs{\curl_g\alpha}_g^{\frac{2n}{n+1}},\quad
    \<\curl_{\tilde{g}}\alpha, \alpha\>_{\tilde{g}} = u^{-n}\<\curl_g\alpha,\alpha\>_g,\quad
    \abs{\alpha}_{\tilde{g}}^{\frac{2n}{n-1}} = u^{-n}\abs{\alpha}_g^{\frac{2n}{n-1}}.
}
Consequently, the three quantities
\eq{\label{functionals}
    \int \abs{\curl\alpha}^{\frac{2n}{n+1}}\,\rdV, \quad \int \<\curl\alpha,\alpha\>\,\rdV, \quad \int \abs{\alpha}^{\frac{2n}{n-1}}\,\rdV
}
are conformal invariants, and hence so are $J_1$ and $J_2$.

The space ${\rm ker}(\rd)=\{\alpha\mid \rd\alpha=0\}$ is independent of the metric, whereas its $L^2$-orthogonal complement depends on the metric. By the Hodge decomposition,
\eq{
    {\rm ker}(\rd)^{\perp} = {\rm ker}(\rd^*)
}
(with $\perp$ taken in the $L^2$ inner product induced by $g$). Thus, for conformally invariant expressions depending only on $\curl\alpha=*\rd\alpha$, it is natural to work modulo closed forms, i.e.
\eq{
    \Omega^{\frac {n-1}2}\slash \,{\rm ker} (\rd).
}

The curl operator has purely point spectrum: the eigenvalue $0$ has infinite multiplicity, while each nonzero eigenvalue has finite multiplicity. If $(M,g)$ admits an orientation-reversing isometry (for instance, if $M$ is a symmetric space), then the spectrum is symmetric about $0$. Excluding the eigenvalue $0$, all eigenforms are co-closed. In fact, for any $\lambda\neq0$, by definition of the Hodge Laplacian we have (see \cite{B19})
\eq{\label{combination}
    \alpha = \alpha_+ + \alpha_- \quad \hbox{ with }\quad \curl\alpha_+=\lambda\alpha_+ \quad \hbox{and}\quad \curl\alpha_-=-\lambda\alpha_-
}
if and only if
\eq{
\Delta\alpha = \lambda^2\alpha \quad \hbox{and}\quad \alpha\in \rd^*\Omega^{\frac{n+1}{2}}.
}
In particular, the spectrum of curl on $\S^n$ can be determined by a result in \cite{IK79}. 
\begin{proposition}[cf. \cites{IK79,B19}]\label{eigenvalue_and_multi}
The spectrum of the curl operator on $(\S^n, g_{{\rm st}})$, besides $0$, is
\eq{
    \left\{ \pm\left(\frac{n+1}{2}+k\right) \,\Big|\, k\geq 0 \right\},
}
and the corresponding multiplicity is
\eq{
    m(\curl, \lambda) = \frac{ (n+k)! }{ \left(\frac{n-1}{2}!\right)^2\cdot k!\cdot \left(\frac{n+1}{2}+k\right) }.
}
\end{proposition}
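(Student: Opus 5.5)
The plan is to reduce the statement to the known spectrum of the Hodge Laplacian on co-closed forms on $\S^n$ and then use the curl/Laplacian relation \eqref{combination}. First I restrict to nonzero eigenvalues. If $\curl\alpha=\lambda\alpha$ with $\lambda\neq0$, then $\alpha=\lambda^{-1}*\rd\alpha$. Hence $\rd^*\alpha=-\lambda^{-1}*\rd*{*}\rd\alpha=-\lambda^{-1}*\rd\rd\alpha=0$, using $*^2={\rm id}$. So $\alpha$ is co-closed, and $\curl^2\alpha=\rd^*\rd\alpha=\Delta\alpha$. This gives $\Delta\alpha=\lambda^2\alpha$ with $\alpha\in\ker(\rd^*)=\rd^*\Omega^{\frac{n+1}{2}}$ by \eqref{sphere_Hodge}. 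Conversely, let $\alpha$ be a co-closed eigenform of $\Delta$ with eigenvalue $\mu>0$. Since $\curl$ is self-adjoint and preserves this eigenspace (it commutes with $\Delta$ on co-closed forms, because $\curl^2=\Delta$ there), the space splits into the $\pm\sqrt\mu$ eigenspaces of $\curl$, exactly as in \eqref{combination}.

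Next I invoke the result of \cite{IK79} on the spectrum of the Hodge Laplacian on co-closed $p$-forms on $\S^n$. For $p=\frac{n-1}{2}$ the eigenvalues are $(p+1+k)(n-p+k)$, $k\ge0$. With $p=\frac{n-1}{2}$ these equal $\left(\frac{n+1}{2}+k\right)^2$. This yields the eigenvalue set $\pm(\frac{n+1}{2}+k)$. The same reference gives the multiplicity $M_k$ of the co-closed $\Delta$-eigenspace as a dimension formula for an irreducible ${\rm SO}(n+1)$-representation, with highest weight $(k+1,1,\dots,1,0,\dots)$ carrying $p$ ones.

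The remaining step is to split $M_k$ evenly between $+\lambda$ and $-\lambda$. For this I use an orientation-reversing isometry $\sigma$ of $\S^n$, e.g. a reflection. Since $\sigma^*$ commutes with $\rd$ and anticommutes with $*$, it anticommutes with $\curl$. Thus $\sigma^*$ maps the $\lambda$-eigenspace isomorphically onto the $-\lambda$-eigenspace, and each has dimension $M_k/2$. Representation-theoretically, the ${\rm SO}(n+1)$-module splits into self-dual and anti-self-dual parts, which $\sigma$ interchanges.

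The main obstacle is simplifying the Weyl dimension formula for $M_k$ into the closed form $\frac{(n+k)!}{((\frac{n-1}{2})!)^2\,k!\,(\frac{n+1}{2}+k)}$ times $2$. I would do this by first checking $k=0$, where the answer should be $\binom{n+1}{p+1}\big/2$, the number of Killing $p$-forms with one sign. Then I would verify the general formula by an induction in $k$ on the ratio $M_{k+1}/M_k$. Alternatively, I would count co-closed eigenforms as the restrictions to $\S^n$ of harmonic polynomial forms on $\R^{n+1}$ and compute the dimension via generating functions. Since the statement is attributed to \cite{IK79,B19}, in the write-up I would cite the multiplicity from there and only carry out the halving argument in detail.
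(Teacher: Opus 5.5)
Your proposal is correct and follows the paper's route. The paper gives no separate proof: it cites \cite{IK79} for the co-closed Hodge spectrum, splits the eigenspaces via \eqref{combination}, and notes that an orientation-reversing isometry makes the nonzero curl spectrum symmetric, which is your reduction plus halving. One small slip: in the middle degree the weight $(k+1,1,\dots,1)$ with $\frac{n-1}{2}$ ones has no trailing zeros, so the co-closed $\Delta$-eigenspace is not $\mathrm{SO}(n+1)$-irreducible but the sum of the two irreducibles with highest weights $(k+1,1,\dots,1,\pm1)$ (precisely the two curl eigenspaces), and Weyl's formula for type $D_{\frac{n+1}{2}}$ gives each of them dimension $\frac{(n+k)!}{((\frac{n-1}{2})!)^2\,k!\,(\frac{n+1}{2}+k)}$ directly, which removes your ``main obstacle'' without any induction.
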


\subsection{Killing forms}

We next recall Killing forms and fix notation for the relevant eigenspaces.

\begin{definition}\label{def_Killing}
A co-closed $p$-form $\alpha\in\Omega^p$ is called a \emph{Killing $p$-form} if
\eq{
    \nabla_X\alpha = \frac{1}{p+1}X\ip\rd\alpha,\quad \forall X\in\Gamma(TM).
}
\end{definition}

For background on Killing and conformal Killing forms we refer to \cites{Semmelmann03, Stromenger10}. On the round sphere one has the following characterization.

\begin{proposition}
\label{Killing_eigenform}
A co-closed $\frac{n-1}{2}$-form $\xi$ on $(\S^n, g_{{\rm st}})$ is a Killing form if and only if it is a first (co-closed) eigenform of the Hodge Laplacian, namely
$\Delta\xi=\frac{(n+1)^2}{4}\xi$,
and it admits a decomposition as in \eqref{combination}.
We call $\xi$ a positive/negative Killing form if $\curl\xi = \pm\frac{n+1}{2}\xi$.
\end{proposition}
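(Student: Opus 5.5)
Assume $\xi$ is co-closed with $\Delta\xi=\frac{(n+1)^2}{4}\xi$ and $\xi=\xi_++\xi_-$ as in \eqref{combination}, with $\curl\xi_\pm=\pm\frac{n+1}{2}\xi_\pm$. I will show $\xi$ is Killing by computing $\norm{\nabla\xi-\frac{1}{p+1}(\cdot)\ip\rd\xi}_{L^2}^2$, $p=\frac{n-1}{2}$, and proving it vanishes. For the converse I will show that a Killing form on $\S^n$ satisfies $\rd^*\rd\xi=\frac{(n+1)^2}{4}\xi$ via the Weitzenb\"ock formula \eqref{sphere_Weitzenbock}, and then invoke the equivalence \eqref{combination} to get the decomposition.

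\emph{Converse direction.} Let $\xi$ be Killing. Then $\nabla\xi=\frac1{p+1}\sum_i e_i^\flat\otimes (e_i\ip\rd\xi)$. Taking the trace in $\rd=\sum e_i^\flat\w\nabla_{e_i}$ and using the counting formula of Proposition~\ref{basic_form}(6) with $\sum_i e_i^\flat\w e_i\ip\rd\xi=(p+1)\rd\xi$ gives consistency. Next I will compute $\nabla^*\nabla\xi=-\frac{1}{p+1}\sum_i e_i\ip\nabla_{e_i}\rd\xi=\frac{1}{p+1}\rd^*\rd\xi$, which uses that the covariant derivative of the trace expression commutes appropriately in a normal frame. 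Since $\rd^*\xi=0$, we have $\Delta\xi=\rd^*\rd\xi$. The Weitzenb\"ock formula then gives $\Delta\xi=\frac1{p+1}\Delta\xi+p(n-p)\xi$, hence $\frac{p}{p+1}\Delta\xi=p(n-p)\xi$. With $p=\frac{n-1}{2}$ and $n-p=\frac{n+1}{2}$ this is $\Delta\xi=(p+1)(n-p)\xi=\frac{(n+1)^2}{4}\xi$. Since $\xi$ is co-closed, $\xi\in\rd^*\Omega^{\frac{n+1}{2}}$ by \eqref{sphere_Hodge}, and \eqref{combination} yields the splitting.

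\emph{Main direction.} Set $T_X=\nabla_X\xi-\frac1{p+1}X\ip\rd\xi$. I will expand $\int\sum_i|T_{e_i}|^2$ as
\[
\int|\nabla\xi|^2-\frac{2}{p+1}\int\sum_i\<\nabla_{e_i}\xi,e_i\ip\rd\xi\>+\frac1{(p+1)^2}\int\sum_i|e_i\ip\rd\xi|^2 .
\]
For the middle term, \eqref{direct_dual} gives $\sum_i\<\nabla_{e_i}\xi,e_i\ip\rd\xi\>=\<\rd\xi,\rd\xi\>$. By the counting formula, the last integrand equals $(p+1)|\rd\xi|^2$. Hence the expression equals $\int|\nabla\xi|^2-\frac1{p+1}\int|\rd\xi|^2$. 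Integrating by parts and using \eqref{sphere_Weitzenbock}, $\int|\nabla\xi|^2=\int\<\Delta\xi,\xi\>-p(n-p)\int|\xi|^2$, and $\int|\rd\xi|^2=\int\<\Delta\xi,\xi\>$ since $\rd^*\xi=0$. With $\Delta\xi=\frac{(n+1)^2}{4}\xi$ the total becomes
\[
\Big(\tfrac{p}{p+1}\tfrac{(n+1)^2}{4}-p(n-p)\Big)\int|\xi|^2 = p\Big(\tfrac{n+1}{2}-\tfrac{n+1}{2}\Big)\int|\xi|^2=0 .
\]
Therefore $T\equiv0$ and $\xi$ is Killing. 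The hypothesis ``decomposition as in \eqref{combination}'' is automatic here, since any co-closed form is in $\rd^*\Omega^{\frac{n+1}{2}}$ on $\S^n$; I will simply note this.

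The main obstacle is the sign and normalization bookkeeping. This covers the identity $\sum_i\<\nabla_{e_i}\xi,e_i\ip\rd\xi\>=|\rd\xi|^2$ and the convention $\<\cdot,\cdot\>$ on forms matching the interior product so that \eqref{direct_dual} holds. In the converse I also need to verify $\nabla^*\nabla\xi=\frac1{p+1}\rd^*\rd\xi$ carefully. Alternatively, I can avoid that verification by using the same $L^2$ identity: $T=0$ forces $\int\<\Delta\xi,\xi\>=\frac{(n+1)^2}{4}\int|\xi|^2$. Combined with the lower bound of the co-closed spectrum from Proposition~\ref{eigenvalue_and_multi}, whose smallest nonzero curl eigenvalue squared is $\frac{(n+1)^2}{4}$, this shows $\xi$ lies in the first eigenspace.

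The positive/negative terminology then follows from $\curl\xi=\pm\frac{n+1}{2}\xi$ on $\xi_\pm$.
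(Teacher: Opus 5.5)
Your proof is correct. The paper gives no proof of this proposition. It states the characterization as known and points to the literature on Killing forms. The $1$-form analogue is later quoted directly from \cite[Corollary~1.1.4]{Semmelmann03} in the proof of Lemma~\ref{lem6.3}.

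Your argument is a self-contained derivation of that standard fact. For co-closed $\xi$ you verify the pointwise identity $\sum_i|T_{e_i}|^2=|\nabla\xi|^2-\frac{1}{p+1}|\rd\xi|^2$, using \eqref{direct_dual} and the counting formula; this step is correct. Integrating it and using \eqref{sphere_Weitzenbock} gives
\[
\int\sum_i|T_{e_i}|^2=\frac{p}{p+1}\int\<\Delta\xi,\xi\>-p(n-p)\int|\xi|^2 ,
\]
and $(p+1)(n-p)=\frac{(n+1)^2}{4}$ exactly in the middle degree. Your converse via $\nabla^*\nabla\xi=\frac{1}{p+1}\rd^*\rd\xi$ in a normal frame is also fine.

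Compared with the citation, your route gives three extras:
\begin{enumerate}
\item It shows the decomposition clause is redundant. Co-closedness on $\S^n$ together with \eqref{combination} supplies it automatically, as you observe.
\item The displayed identity gives $\int\<\Delta\xi,\xi\>\ge\frac{(n+1)^2}{4}\int|\xi|^2$ on co-closed forms, with equality exactly for Killing forms. So your alternative converse does not even need Proposition~\ref{eigenvalue_and_multi}.
\item Nothing before the last line uses $p=\frac{n-1}{2}$. The same computation therefore also covers the $1$-form case (eigenvalue $2(n-1)$) that the paper cites in Lemma~\ref{lem6.3}.
\end{enumerate}
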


Throughout the paper $\xi$ denotes a Killing $\frac{n-1}{2}$-form on $\S^n$.
For $k\ge 1$ we write $E_{\pm k}$ for the curl eigenspace with eigenvalue $\pm\big(\frac{n+1}{2}+k-1\big)$.
We also denote by $P_k$ ($k\ge 0$) the space of spherical harmonics of degree $k$, i.e. eigenfunctions of $\nabla^*\nabla$ with eigenvalue $k(n+k-1)$.
By Proposition~\ref{Killing_eigenform}, $E_1$ (resp. $E_{-1}$) is precisely the space of positive (resp. negative) Killing $\frac{n-1}{2}$-forms.

We now record several properties of Killing forms that will be used repeatedly.

\begin{proposition}\label{Killing_properties}
Let $\xi\in E_1$ and $\eta\in E_{-1}$. Then
\begin{enumerate}
    \item $\nabla_X\xi = X\ip*\xi$ and $\nabla_X\eta = -X\ip*\eta$ for any vector $X$;
    \item $\xi$ and $\eta$ have constant length;
    \item Any orthonormal basis of $E_1$ trivializes the $\frac{n-1}{2}$-form bundle; the same holds true for $E_{-1}$;
    \item For any function $f$, we have
    \eq{
    \curl (f\xi) = \frac{n+1}{2}f\xi - \nabla f\ip *\xi;
    }
    \item 
    If $f\in P_1$, then
    \eq{
    \curl (\nabla f\ip *\xi) = -\frac{n+1}{2}f\xi + \nabla f\ip *\xi=-\curl (f\xi).
    }
\end{enumerate}
\end{proposition}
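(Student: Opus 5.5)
The plan is to derive everything from Definition~\ref{def_Killing}, the eigenvalue relation $\curl\xi=\frac{n+1}{2}\xi$ (Proposition~\ref{Killing_eigenform}), and the algebraic identities of Proposition~\ref{basic_form}.

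\emph{Item (1).} Take $p=\frac{n-1}{2}$, so $p+1=\frac{n+1}{2}$. Since $*^2={\rm id}$, we have $\rd\xi=*\curl\xi=\frac{n+1}{2}*\xi$. The Killing equation then reads
\[
\nabla_X\xi=\tfrac{2}{n+1}\,X\ip\rd\xi=X\ip*\xi .
\]
For $\eta$ the only change is $\curl\eta=-\frac{n+1}{2}\eta$, which gives $\nabla_X\eta=-X\ip*\eta$.

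\emph{Items (2) and (3).} For (2), write $X\abs{\xi}^2=2\<\nabla_X\xi,\xi\>=2\<X\ip*\xi,\xi\>$. This vanishes by the antisymmetry in Proposition~\ref{basic_form}(3). For (3), take $\xi_a,\xi_b\in E_1$. Then
\[
X\<\xi_a,\xi_b\>=\<X\ip*\xi_a,\xi_b\>+\<\xi_a,X\ip*\xi_b\>=0 ,
\]
again by Proposition~\ref{basic_form}(3). So the pointwise Gram matrix of an $L^2$-orthonormal basis of $E_1$ is constant, and hence equals $\omega_n^{-1}{\rm Id}$. The basis is therefore pointwise orthogonal with nonzero constant lengths, so it is pointwise linearly independent.

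It remains to count. Put $k=0$ in the multiplicity formula of Proposition~\ref{eigenvalue_and_multi}. Using $(p!)^2(p+1)=p!\,(p+1)!$, we get
\[
\dim E_1=\frac{n!}{p!\,(p+1)!}=\binom{n}{p},
\]
which is exactly the rank of $\Lambda^p T^*\S^n$. So the basis trivializes the bundle. The case of $E_{-1}$ is identical.

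\emph{Item (4).} Expand $\rd(f\xi)=\rd f\w\xi+f\,\rd\xi$. Applying $*$ gives $\curl(f\xi)=*(\rd f\w\xi)+f\curl\xi$. Proposition~\ref{basic_form}(2) gives $*(\rd f\w\xi)=-\nabla f\ip*\xi$, and item (4) follows.

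\emph{Item (5).} This is the only genuine computation. I would use the local expression $\rd=\sum_i e_i^\flat\w\nabla_{e_i}$ together with the Leibniz rule:
\[
\nabla_{e_i}(\nabla f\ip*\xi)=(\nabla_{e_i}\nabla f)\ip*\xi+\nabla f\ip*\nabla_{e_i}\xi .
\]
Because $f\in P_1$ is the restriction of a linear function, its Hessian is $\nabla^2f=-f\,g$. So the first piece contributes
\[
-f\sum_i e_i^\flat\w e_i\ip*\xi=-\tfrac{n+1}{2}\,f*\xi ,
\]
by the general counting formula (6) applied to the $\frac{n+1}{2}$-form $*\xi$.

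For the second piece, use item (1) to write $*\nabla_{e_i}\xi=*(e_i\ip*\xi)=-e_i^\flat\w\xi$, where the last step is Proposition~\ref{basic_form}(2) followed by $*^2={\rm id}$. Next apply the Clifford relation (7) to move $e_i^\flat\w$ past $\nabla f\ip$. The term $\sum_i e_i^\flat\w e_i^\flat\w\xi$ vanishes. The $\delta$-term leaves a multiple of $\rd f\w\xi$, and its sign must be tracked carefully.

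Finally, apply $*$ and convert $*(\rd f\w\xi)$ back to $-\nabla f\ip*\xi$ using (2). The target is
\[
\curl(\nabla f\ip*\xi)=-\tfrac{n+1}{2}f\xi+\nabla f\ip*\xi ,
\]
and comparing with item (4) then gives $-\curl(f\xi)$.

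I expect the main obstacle to be the sign bookkeeping in this last step: the degree-dependent signs in (2) and (3) for forms of degree $\frac{n\pm1}{2}$, and the sign produced by the Clifford relation. The final equality $\curl(\nabla f\ip*\xi)=-\curl(f\xi)$ serves as a consistency check on those signs.
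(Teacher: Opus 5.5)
Your proposal is correct and follows the paper's proof. Items (1)--(4) are argued identically, including the multiplicity count $\dim E_1=\binom{n}{\frac{n-1}{2}}$ for (3). For (5) you expand $\rd(\nabla f\ip*\xi)$ via $\rd=\sum_i e_i^\flat\w\nabla_{e_i}$ and then apply $*$, whereas the paper first writes $\curl(\nabla f\ip*\xi)=-\rd^*(\rd f\w\xi)$ and expands $\rd^*=-\sum_i e_i\ip\nabla_{e_i}$; this is the Hodge dual of the same computation with the same ingredients.

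The sign you left open works out as required. Since $\sum_i e_i^\flat\w\nabla f\ip(e_i^\flat\w\xi)=\rd f\w\xi$, you get $\rd(\nabla f\ip*\xi)=-\frac{n+1}{2}f*\xi-\rd f\w\xi$. Applying $*$ and Proposition~\ref{basic_form}(2) then gives exactly $-\frac{n+1}{2}f\xi+\nabla f\ip*\xi$.
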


\begin{proof}
(1) By Proposition~\ref{Killing_eigenform} we have $\curl\xi = \frac{n+1}{2}\xi$. Combining this with Definition~\ref{def_Killing}, we compute
\eq{
    \nabla_X\xi = \frac{2}{n+1}X\ip \rd\xi = \frac{2}{n+1}X\ip *\curl\xi = X\ip *\xi.
}
The statement for $\eta\in E_{-1}$ is proved similarly.

(2) This follows immediately from (1) and Proposition~\ref{basic_form}(3).

(3) Let $\xi_1,\xi_2\in E_1$ and $X\in\Gamma(TM)$. Using (1) and Proposition~\ref{basic_form}(3), we obtain
\eq{
    \nabla_X\<\xi_1,\xi_2\> = \<\nabla_X\xi_1, \xi_2\> + \<\xi_1, \nabla_X\xi_2\>
    = \<X\ip*\xi_1, \xi_2\> + \<\xi_1, X\ip*\xi_2\> = 0.
}
Hence $\<\xi_1,\xi_2\>$ is constant. Moreover, by Proposition~\ref{eigenvalue_and_multi} we have
\eq{
    {\rm dim}E_1 = m(\curl, \frac{n+1}{2}) = \binom{n}{\frac{n-1}{2}} = {\rm rank}\,\Omega^{\frac{n-1}{2}}.
}
Therefore any orthonormal basis $\{\xi_i\}$ of $E_1$ gives a global frame of $\Omega^{\frac{n-1}{2}}$, which is parallel with respect to the connection
\eq{
    \widetilde{\nabla}_X=\nabla_X - X\ip*.
}
Using Proposition~\ref{basic_form}(2) and
\eq{
    R^{\nabla}(X,Y) = Y^\flat \wedge X\ip - X^\flat \wedge Y\ip \quad \hbox{on}\ \S^n,
}
one checks that
\eq{
    R^{\tilde{\nabla}}=0.
}
The corresponding statements for $E_{-1}$ are analogous.

(4) Using Proposition~\ref{basic_form}(2) we have
\eq{
    \curl(f\xi) = *\rd(f\xi) = *(f\rd\xi + \rd f\w\xi) = \frac{n+1}{2}f\xi - \nabla f\ip *\xi.
}

(5) Let $f\in P_1$. Then ${\rm Hess}f=-fg_{{\rm st}}$, and therefore
\begin{align}
    \curl (\nabla f\ip *\xi) &= -*\rd* (\rd f\w \xi) = -\rd^* (\rd f \w\xi)\\
    &= e_i\ip\nabla_{e_i}(\rd f \w\xi) = e_i\ip (\nabla_{e_i}\rd f\w\xi + \rd f\w\nabla_{e_i}\xi )\\
    &=-fe_i\ip e_i\w\xi + e_i\ip(\rd f\w e_i\ip*\xi)\\
    &=-\frac{n+1}{2}f\xi + \nabla f\ip*\xi =-\curl (f\xi),
    \end{align}
where we used Proposition~\ref{basic_form}(2) in the first equality, Proposition~\ref{basic_form}(4) in the third, and Proposition~\ref{basic_form}(6)(7) in the sixth.
\end{proof}

\section{Estimates for eigenforms}\label{sec3}

This section collects a few eigenfunction identities and establishes the sharp $L^2$-estimates needed for the stability and instability arguments in Sections~4--6.

\begin{proposition}\label{new_eigenfunction}
Let $\xi\in E_1$ and $\eta\in E_{-1}$. Then
\begin{enumerate}
    \item For any $\varphi_{k} \in E_{k}$ $(k\ge 1)$, we have $\langle \xi, \varphi_{k}\rangle \in P_{k-1}$ and $\langle \eta, \varphi_{k}\rangle \in P_{k+1}$.
    \item For any $\varphi_{-k} \in E_{-k}$ $(k\ge 1)$, we have $\langle \xi, \varphi_{-k}\rangle \in P_{k+1}$ and $\langle \eta, \varphi_{-k}\rangle \in P_{k-1}$.
\end{enumerate}
\end{proposition}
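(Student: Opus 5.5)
The plan is to compute the rough Laplacian of the function $f=\langle\xi,\varphi\rangle$ directly and to show that $\nabla^*\nabla f=j(n+j-1)f$ with the appropriate $j$. Since $P_j$ is by definition the eigenspace of $\nabla^*\nabla$ with eigenvalue $j(n+j-1)$, this gives the claim; the case $f\equiv0$ is trivially included. Throughout I write $a=\frac{n+1}{2}$ and use the product rule
\[
\nabla^*\nabla\langle\xi,\varphi\rangle=\langle\nabla^*\nabla\xi,\varphi\rangle+\langle\xi,\nabla^*\nabla\varphi\rangle-2\sum_i\langle\nabla_{e_i}\xi,\nabla_{e_i}\varphi\rangle .
\]

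\textbf{Step 1 (zeroth-order terms).} Let $\varphi$ satisfy $\curl\varphi=\lambda\varphi$ with $\lambda\neq0$. Then $\varphi$ is co-closed and $\Delta\varphi=\lambda^2\varphi$, as recalled after \eqref{combination}. The Weitzenb\"ock formula \eqref{sphere_Weitzenbock} with $p(n-p)=\frac{n^2-1}{4}$ then gives
\[
\nabla^*\nabla\varphi=\Big(\lambda^2-\tfrac{n^2-1}{4}\Big)\varphi .
\]
Taking $\lambda=\pm a$ in particular yields $\nabla^*\nabla\xi=a\,\xi$ and $\nabla^*\nabla\eta=a\,\eta$.

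\textbf{Step 2 (cross term).} By Proposition~\ref{Killing_properties}(1) we have $\nabla_{e_i}\xi=e_i\ip*\xi$. Proposition~\ref{basic_form}(3) then gives
\[
\sum_i\langle e_i\ip*\xi,\nabla_{e_i}\varphi\rangle=-\Big\langle\xi,\sum_i e_i\ip*\nabla_{e_i}\varphi\Big\rangle .
\]
By Proposition~\ref{basic_form}(2) and (4),
\[
\sum_i e_i\ip*\nabla_{e_i}\varphi=-*\sum_i e_i^\flat\w\nabla_{e_i}\varphi=-*\rd\varphi=-\lambda\varphi .
\]
Hence the cross term equals $\lambda f$. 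For $\eta$ the sign in Proposition~\ref{Killing_properties}(1) flips, so the cross term becomes $-\lambda\langle\eta,\varphi\rangle$.

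\textbf{Step 3 (collecting terms).} Combining Steps 1 and 2, and using $a^2-\frac{n^2-1}{4}=a$, we get
\[
\nabla^*\nabla\langle\xi,\varphi\rangle=\big(\lambda^2-2\lambda+a\big)\langle\xi,\varphi\rangle,\qquad
\nabla^*\nabla\langle\eta,\varphi\rangle=\big(\lambda^2+2\lambda+a\big)\langle\eta,\varphi\rangle .
\]

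For $\varphi\in E_k$ we have $\lambda=a+k-1$. Writing $j=k-1$, the two eigenvalues expand as follows.
\begin{itemize}
\item For $\xi$: $\lambda^2-2\lambda+a=j(n+j-1)$, which is the eigenvalue on $P_{k-1}$.
\item For $\eta$: $\lambda^2+2\lambda+a=(j+2)(n+j+1)=(k+1)(n+k)$, which is the eigenvalue on $P_{k+1}$.
\end{itemize}
This proves part (1).

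For $\varphi\in E_{-k}$ we have $\lambda=-(a+k-1)$. Replacing $\lambda$ by $-\lambda$ exactly swaps the two polynomials $\lambda^2\mp2\lambda+a$. Hence $\langle\xi,\varphi_{-k}\rangle\in P_{k+1}$ and $\langle\eta,\varphi_{-k}\rangle\in P_{k-1}$, which is part (2).

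The only delicate point is the sign bookkeeping in Step 2, which relies on $*^2=\mathrm{id}$ and the sign $(-1)^p$ in Proposition~\ref{basic_form}(2),(3) for $p=\frac{n-1}{2}$. As a consistency check, taking $\varphi=\xi$ must give a constant, i.e.\ the coefficient $0$. This holds, since for $k=1$ we have $j=0$ and $j(n+j-1)=0$.
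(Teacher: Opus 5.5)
Your approach is the same as the paper's. You expand $\nabla^*\nabla\langle\xi,\varphi\rangle$ by the product rule, use the Weitzenb\"ock formula for the zeroth-order terms, and turn the cross term into $\langle\xi,\curl\varphi\rangle$ via the Killing equation and Proposition~\ref{basic_form}(2)--(4). Steps 1 and 2 are correct, and running the computation for a general eigenvalue $\lambda$ neatly covers the cases the paper calls ``analogous''.

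\textbf{Error in Step 3.} Combining Steps 1 and 2 actually gives
\[
\nabla^*\nabla\langle\xi,\varphi\rangle=\Big(a+\lambda^2-\tfrac{n^2-1}{4}-2\lambda\Big)\langle\xi,\varphi\rangle .
\]
You dropped the constant $-\frac{n^2-1}{4}$ coming from $\nabla^*\nabla\varphi$; the identity $a^2-\frac{n^2-1}{4}=a$ only simplifies the $\xi$-term. So your polynomials $\lambda^2\mp2\lambda+a$ are wrong. The expansions you then assert are also false: for $\lambda=a+j$ one has $\lambda^2-2\lambda+a=j(n+j-1)+\frac{n^2-1}{4}$ and $\lambda^2+2\lambda+a=(j+2)(n+j+1)+\frac{n^2-1}{4}$. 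You reach the right eigenvalues only because these two mistakes cancel. Your consistency check would have caught this had you applied it to the Step~3 formula rather than to its claimed expansion: at $\lambda=a$, $\lambda^2-2\lambda+a=a^2-a=\frac{n^2-1}{4}\neq0$.

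\textbf{Repair.} Using $\frac{n^2-1}{4}=a^2-a$, the correct coefficients factor as
\[
\lambda^2-2\lambda+2a-a^2=(\lambda-a)(\lambda+a-2),\qquad
\lambda^2+2\lambda+2a-a^2=(\lambda+a)(\lambda-a+2).
\]
For $\lambda=a+j$ with $j=k-1$, these equal $j(n+j-1)$ (the paper's $(k-1)(n+k-2)$) and $(j+2)(n+j+1)=(k+1)(n+k)$. Your observation that $\lambda\mapsto-\lambda$ interchanges the two polynomials still holds for the corrected ones, which gives part (2). With this correction the argument is complete.
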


\begin{proof}
We prove the first statement in (1); the remaining cases are analogous. Using the Weitzenb\"ock formula \eqref{sphere_Weitzenbock}, we compute
\begin{align*}
    &\nabla^*\nabla\<\xi,\varphi_k\> = \<-\nabla_{e_i}\nabla_{e_i}\xi, \varphi_k\> + \<\xi, -\nabla_{e_i}\nabla_{e_i}\varphi_k\> - 2\<\nabla_{e_i}\xi, \nabla_{e_i}\varphi_k\>\\
    &= \<\Delta\xi-\frac{(n+1)(n-1)}{4}\xi, \varphi_k\> + \<\xi, \Delta\varphi_k-\frac{(n+1)(n-1)}{4}\varphi_k\> -2\<e_i\ip*\xi, \nabla_{e_i}\varphi_k\>\\
    &= \<\frac{(n+1)^2}{4}\xi-\frac{(n+1)(n-1)}{4}\xi, \varphi_k\> + \<\xi, \big(\frac{n+1}{2}+k-1\big)^2\varphi_k-\frac{(n+1)(n-1)}{4}\varphi_k\> + 2\<\xi, e_i\ip*\nabla_{e_i}\varphi_k\>\\
    &= \frac{n+1}{2}\<\xi, \varphi_k\> + \Big[ \big(\frac{n-1}{2}+k\big)^2 - \frac{(n+1)(n-1)}{4} \Big]\<\xi, \varphi_k\> - 2\<\xi, \curl\varphi_k\>\\
    &= \Big[ \frac{n+1}{2} + \big(\frac{n-1}{2}+k\big)^2 - \frac{(n+1)(n-1)}{4} - 2\big(\frac{n-1}{2}+k\big) \Big]\<\xi, \varphi_k\>\\
    &= (k-1)(n+k-2)\<\xi, \varphi_k\>,
\end{align*}
where we used Proposition~\ref{Killing_properties}(1), Proposition~\ref{basic_form}(3), and Proposition~\ref{basic_form}(2)(4).
Thus $\langle \xi,\varphi_k\rangle$ is an eigenfunction of $\nabla^*\nabla$ with eigenvalue $(k-1)(n+k-2)$, i.e. $\langle \xi,\varphi_k\rangle\in P_{k-1}$.
\end{proof}

\begin{corollary}\label{cor3.2}
Let $\xi\in E_1$. For $\varphi_k\in E_k$ and $\varphi_{-j}\in E_{-j}$ we have
\eq{
   \int \langle \xi, \varphi_k\rangle \langle \xi, \varphi_{-j}\rangle=0 \quad \hbox{for any }\ k\neq j+2.
}
\end{corollary}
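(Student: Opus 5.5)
By Proposition~\ref{new_eigenfunction}(1), applied with $\varphi_k\in E_k$, we have $\langle \xi,\varphi_k\rangle\in P_{k-1}$. By Proposition~\ref{new_eigenfunction}(2), applied with $\varphi_{-j}\in E_{-j}$ and the positive Killing form $\xi$, we have $\langle \xi,\varphi_{-j}\rangle\in P_{j+1}$. So the integral in question is the $L^2(\S^n)$ pairing of a spherical harmonic of degree $k-1$ with one of degree $j+1$.

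Spherical harmonics of different degrees are eigenfunctions of the self-adjoint operator $\nabla^*\nabla$ on the closed manifold $\S^n$. Their eigenvalues are $(k-1)(n+k-2)$ and $(j+1)(n+j)$ respectively. The map $m\mapsto m(n+m-1)$ is strictly increasing for $m\ge 0$, so these eigenvalues differ whenever $k-1\neq j+1$, that is, whenever $k\neq j+2$. Eigenfunctions of a self-adjoint operator with distinct eigenvalues are $L^2$-orthogonal; concretely, integrating by parts,
\[
\lambda_a\int fg=\int (\nabla^*\nabla f)g=\int f\,\nabla^*\nabla g=\lambda_b\int fg,
\]
which forces $\int fg=0$ when $\lambda_a\neq\lambda_b$.

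This gives $\int\langle\xi,\varphi_k\rangle\langle\xi,\varphi_{-j}\rangle=0$ for every $k\neq j+2$. There is no real obstacle: the only point needing care is using the correct degree shifts from Proposition~\ref{new_eigenfunction}, namely $k-1$ for $E_k$ and $j+1$ for $E_{-j}$, both paired against $\xi\in E_1$.
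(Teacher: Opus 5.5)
Your argument is correct and is exactly the intended one. The paper states this as an immediate consequence of Proposition~\ref{new_eigenfunction}: it uses $\langle\xi,\varphi_k\rangle\in P_{k-1}$ and $\langle\xi,\varphi_{-j}\rangle\in P_{j+1}$, together with the $L^2$-orthogonality of spherical harmonics of distinct degrees.
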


We next classify the eigenspaces $E_{\pm 2}$; this is the only explicit classification needed in the main arguments. A complete classification for all $k$ is given in Appendix~\ref{secA}. See also \cite{WZ25} for an analogous picture for Dirac eigenspinors.

\begin{proposition}\label{classification}
Let $E_{\pm k}$ be the curl eigenspaces defined above. Then
\eq{
    E_2 &= {\rm span}\left\{ \frac{n+1}{2}f\xi - \nabla f\ip*\xi=\curl (f\xi) \,\Big|\, \xi\in E_1, f\in P_1 \right\},\\
    E_{-2} &= {\rm span}\left\{ \frac{n+1}{2}f\eta + \nabla f\ip*\eta=-\curl (f\eta)\,\Big|\, \eta\in E_{-1}, f\in P_1 \right\}.
}
Moreover,
\eq{
    {\rm span}\{f\xi, \nabla f\ip*\xi \,|\, \xi\in E_1, f\in P_1\} = {\rm span}\{f\xi \,|\, \xi\in E_1, f\in P_1\}.
}
\end{proposition}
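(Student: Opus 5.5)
The plan is to show that the forms $\curl(f\xi)$, with $\xi\in E_1$ and $f\in P_1$, lie in $E_2$ and together fill it out, and then to prove the span identity separately. The case of $E_{-2}$ follows by reversing orientation.

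\emph{Step 1: $\curl(f\xi)\in E_2$.} By Proposition~\ref{Killing_properties}(4), $\curl(f\xi)=\frac{n+1}{2}f\xi-\nabla f\ip*\xi$. Proposition~\ref{Killing_properties}(5) gives $\curl(\nabla f\ip*\xi)=-\curl(f\xi)$. Applying $\curl$ to the expression from (4) and substituting (5) yields
\[
\curl\big(\curl(f\xi)\big)=\tfrac{n+1}{2}\curl(f\xi)+\curl(f\xi)=\tfrac{n+3}{2}\curl(f\xi).
\]
Since $\frac{n+3}{2}=\frac{n+1}{2}+1$, the form $\curl(f\xi)$ is a $\curl$-eigenform with the eigenvalue defining $E_2$. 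It is co-closed, being in the image of $*\rd$. This also shows that $V:=\mathrm{span}\{f\xi,\nabla f\ip*\xi\}$ is invariant under $\curl$.

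\emph{Step 2: the span is all of $E_2$.} I would argue by orthogonality rather than by comparing dimensions. Suppose $\varphi\in E_2$ is $L^2$-orthogonal to every $\curl(f\xi)$. Self-adjointness then gives
\[
0=\int\<\curl(f\xi),\varphi\>=\int f\<\xi,\curl\varphi\>=\tfrac{n+3}{2}\int f\<\xi,\varphi\>.
\]
By Proposition~\ref{new_eigenfunction}(1), $\<\xi,\varphi\>\in P_1$. Taking $f=\<\xi,\varphi\>$ shows $\<\xi,\varphi\>\equiv0$ for every $\xi\in E_1$. By Proposition~\ref{Killing_properties}(3), an orthonormal basis of $E_1$ trivializes the bundle, so $\varphi\equiv0$.

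As a consistency check, one can also compare dimensions. The spanning set has at most $\dim E_1\cdot\dim P_1=(n+1)\binom{n}{\frac{n-1}{2}}$ elements. Proposition~\ref{eigenvalue_and_multi} with $k=1$ gives $\dim E_2=\frac{(n+1)!}{((\frac{n-1}{2})!)^2\frac{n+3}{2}}$, which is smaller than that bound, so there are linear relations among the spanning elements.

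\emph{Step 3: the span identity.} The main obstacle is to show that each $\nabla f\ip*\xi$ lies in $W:=\mathrm{span}\{f\xi\}$. I would use the trivialization by an orthonormal basis $\{\xi_a\}$ of $E_1$ and write $\nabla f\ip*\xi=\sum_a g_a\xi_a$ with $g_a=\<\nabla f\ip*\xi,\xi_a\>$. It then suffices to check that each $g_a\in P_1$.

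To do this, compute $\nabla g_a$ using $\mathrm{Hess}f=-fg$ together with Proposition~\ref{Killing_properties}(1) for both $\xi$ and $\xi_a$. The claim to verify is that the terms quadratic in $*$ cancel by Proposition~\ref{basic_form}(3), so that $\mathrm{Hess}\,g_a=-g_a g$.

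Alternatively, one can use the spectral structure. Since $V$ is $\curl$-invariant and finite-dimensional, it decomposes into eigenspaces. Proposition~\ref{new_eigenfunction} together with the trivialization shows that any eigenform $\varphi\in V$ with eigenvalue $\lambda$ has coefficients $\<\xi_a,\varphi\>$ in $P_{m}$ for a single index $m$ determined by $\lambda$. Meanwhile $\<\xi_a,f\xi_b\>$ is a constant times $f$, and $\<\xi_a,\nabla f\ip*\xi_b\>=-\<\nabla f\ip*\xi_a,\xi_b\>$. Combining the two, I would aim to show that $V$ contains only components in $E_2$ and $E_{-1}$, and that both of these have coefficients in $P_1$; the latter gives the inclusion $V\subseteq W$.

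The reverse inclusion $W\subseteq V$ is trivial. I expect the Hessian computation for $g_a$ to be the delicate point of the whole argument.
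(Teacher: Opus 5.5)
Steps 1 and 2 are correct. Step 2's orthogonality argument, using self-adjointness and the choice $f=\langle\xi,\varphi\rangle\in P_1$, is a valid alternative to the paper's route. The paper instead expands $\varphi_2=\sum_i h_i\xi_i$ with $h_i\in P_1$ and uses the splitting $V=V_1\oplus V_2$, where $V_2\subset\ker(\rd)$ and hence $V_2\perp E_2$.

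The gap is in Step 3, which you correctly identify as the crux but do not prove.

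\begin{itemize}
\item The Hessian route is only announced. The cancellation you predict is never checked.
\item The spectral route rests on a wrong picture of $V$. By Proposition~\ref{Killing_properties}(4),(5), $\curl(f\xi+\nabla f\ip*\xi)=0$. So $V=E_2\oplus V_2$, where $V_2$ consists of closed forms. These forms are nonzero, since the pairing of $f\xi+\nabla f\ip*\xi$ with $\xi$ is $f\abs{\xi}^2$.
\item The second component of $V$ therefore lies in $\ker(\curl)$, not in $E_{-1}$. Proposition~\ref{new_eigenfunction} says nothing about the eigenvalue $0$. So your ``single index $m$ determined by $\lambda$'' fails exactly on the component that matters.
\item Showing that this closed component lies in $W$ is just a restatement of the goal.
\item Even for $E_{-1}$, the coefficients would lie in $P_2$, not $P_1$.
\end{itemize}

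The missing idea is a one-line use of what you already have. Proposition~\ref{new_eigenfunction}(1) plus the trivialization gives, for every $\varphi\in E_2$, $\varphi=\sum_a\langle\xi_a,\varphi\rangle\xi_a$ with $\langle\xi_a,\varphi\rangle\in P_1$. That is, $E_2\subset W$. Then Proposition~\ref{Killing_properties}(4) reads
\[
\nabla f\ip*\xi=\tfrac{n+1}{2}f\xi-\curl(f\xi).
\]
The first term is in $W$, and $\curl(f\xi)\in E_2\subset W$ by your Step 1. Hence $V\subset W$. This is why the paper says the final identity ``follows immediately,'' and no Hessian computation is needed.
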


\begin{proof}
We treat $E_2$; the case of $E_{-2}$ is analogous.
Define
\eq{
    V_1 &\coloneqq {\rm span}\left\{ \frac{n+1}{2}f\xi - \nabla f\ip*\xi \,\Big|\, \xi\in E_1, f\in P_1 \right\},\\
    V_2 &\coloneqq {\rm span}\left\{ f\xi + \nabla f\ip*\xi \,\Big|\, \xi\in E_1, f\in P_1 \right\},\\
    V &\coloneqq {\rm span}\{f\xi, \nabla f\ip*\xi \,|\, \xi\in E_1, f\in P_1\}.
}
By Proposition~\ref{Killing_properties}(4) we have $V_1\subset E_2$. Moreover, the same identity shows that $\curl(f\xi+\nabla f\ip*\xi)=0$, hence $V_2\subset {\rm ker}(\curl\!)={\rm ker}(\rd)$ and therefore $V_2\subset E_2^\perp$. In particular, $V=V_1\oplus V_2$.

Let $\{\xi_i\}$ be an orthonormal basis of $E_1$. For any $\varphi_2\in E_2$, Proposition~\ref{Killing_properties}(3) yields
\eq{
    \varphi_2 = \sum_i h_i\xi_i
}
for suitable functions $h_i$. By Proposition~\ref{new_eigenfunction}, we have $\langle\xi,\varphi_2\rangle\in P_1$ for every $\xi\in E_1$, and hence $h_i\in P_1$ for each $i$. It follows that $\varphi_2\in V$. Since $V_2\subset E_2^\perp$, we conclude that in fact $\varphi_2\in V_1$, and thus $E_2=V_1$. The final identity for $V$ follows immediately.
\end{proof}

We now turn to the key $L^2$-estimates that enter the second variation computations.
\begin{proposition}\label{crucial_estimate}
Let $\xi \in E_{1}$ be given with $\abs{\xi}=1$. Then
\begin{enumerate}
    \item For any $k\geq 1$ and $\varphi_{\pm k}\in E_{\pm k}$, we have
    \eq{
        \int\<\xi,\varphi_{k}\>^2 \leq \frac{n+k-2}{n+2k-3} \int \abs{\varphi_k}^2
    }
    and
    \eq{
        \int\<\xi,\varphi_{-k}\>^2 \leq \frac{k+1}{n+2k+1} \int \abs{\varphi_{-k}}^2;
    }
    \item Moreover, for $k=2$ and $\varphi_2\in E_2$ we have the sharp estimate
    \eq{
        \int\<\xi,\varphi_{2}\>^2 \leq \frac{n+1}{n+3} \int \abs{\varphi_{2}}^2,
    } 
    with equality if and only if
    \eq{
    \varphi_2 = \frac{n+1}{2}f\xi - \nabla f\ip *\xi=\curl(f\xi) \quad\hbox{for some}\quad f\in P_1.
    }
    \end{enumerate}
\end{proposition}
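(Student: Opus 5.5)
The plan is to turn the pointwise pairing with $\xi$ into a spectral statement about the function $f\coloneqq\<\xi,\varphi\>$ and then compare two expressions for $\int\abs{\nabla f}^2$. By Proposition~\ref{new_eigenfunction}, $f\in P_{k-1}$ when $\varphi=\varphi_k$, and $f\in P_{k+1}$ when $\varphi=\varphi_{-k}$. Hence
\[
\int\abs{\nabla f}^2=\mu\int f^2 ,\qquad \mu=(k-1)(n+k-2)\ \hbox{ resp. }\ \mu=(k+1)(n+k).
\]
On the other hand, Proposition~\ref{Killing_properties}(1) and Proposition~\ref{basic_form}(3) give
\[
\nabla_X f=\<\xi,\,T_X\varphi\>,\qquad T_X\varphi\coloneqq\nabla_X\varphi-X\ip*\varphi .
\]
(For $\varphi_{-k}$ the same formula holds; only the sign interplay with $\curl\varphi=-\lambda\varphi$ changes.) Since $\abs{\xi}=1$, any bound of the form $\sum_i\<\xi,T_{e_i}\varphi\>^2\le\theta\sum_i\abs{T_{e_i}\varphi}^2+(\hbox{terms in } f)$ will convert into the claimed inequality once $\int\sum_i\abs{T_{e_i}\varphi}^2$ is known.

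That integral is computed from the Weitzenb\"ock formula \eqref{sphere_Weitzenbock}, which gives $\int\abs{\nabla\varphi}^2=(\lambda^2-\frac{n^2-1}{4})\int\abs{\varphi}^2$. We also need the cross term $\sum_i\<\nabla_{e_i}\varphi,e_i\ip*\varphi\>=\pm\<\curl\varphi,\varphi\>$, obtained from Proposition~\ref{basic_form}(2)(4), and the counting identity $\sum_i\abs{e_i\ip*\varphi}^2=\frac{n+1}{2}\abs{\varphi}^2$ from Proposition~\ref{basic_form}(6). A quick check shows that naive Cauchy--Schwarz, $\<\xi,T\varphi\>^2\le\abs{T\varphi}^2$, only reproduces the trivial bound $\int f^2\le\int\abs{\varphi}^2$ for $E_k$. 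So the real content lies in a sharper pointwise inequality.

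For that I would decompose the $1$-form-valued section $X\mapsto T_X\varphi$ into its irreducible parts, in the spirit of the twistor/Killing splitting $T\varphi=\sum_i e_i^\flat\otimes\big(\frac{1}{p+1}e_i\ip A+\frac1{n-p+1}e_i^\flat\w B+\hbox{trace-free part}\big)$. Here $A$ and $B$ are essentially $\rd\varphi$-type and $\rd^*\varphi$-type contractions of $T\varphi$, computable from $\curl\varphi=\pm\lambda\varphi$ and $\rd^*\varphi=0$. The linear functional $T\mapsto(\<\xi,T_{e_i}\>)_i$ then sees the $A,B$ parts only through explicit quantities involving $f$ and $\<\nabla\cdot\ip*\xi,\varphi\>$. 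Only the trace-free part is estimated by Cauchy--Schwarz, and the gain comes from the fact that a unit vector can capture only the fraction $\frac{n+k-2}{n+2k-3}$ (resp.\ $\frac{k+1}{n+2k+1}$) of the remaining norm once the integrated identities are used. Integrating and solving the resulting linear inequality for $\int f^2$ should give (1). This step is the main obstacle: getting the pointwise algebra right so that the constants come out exactly, including the sign difference between $E_k$ and $E_{-k}$. If the irreducible splitting proves unwieldy, my fallback is to expand $\varphi=\sum_i h_i\xi_i$ via Proposition~\ref{Killing_properties}(3) and estimate $\int h_1^2$ using the relations among the $h_i$ forced by the eigen-equation.

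For (2) I would work directly with the classification of Proposition~\ref{classification}. Write $\varphi_2=\curl(f\xi)+\sum_j\curl(f_j\xi_j)$ with $\xi_j\perp\xi$ in $E_1$ and $f,f_j\in P_1$. Then compute $\<\xi,\varphi_2\>=\frac{n+1}{2}f-\sum_j\<\nabla f_j\ip*\xi_j,\xi\>$ and $\int\abs{\varphi_2}^2$ explicitly, using $\int f^2$, $\int\abs{\nabla f}^2=n\int f^2$, and Proposition~\ref{basic_form}(3). This reduces (2) to a finite-dimensional quadratic-form comparison on $P_1\otimes E_1$. For the pure term $\varphi_2=\curl(f\xi)$ one gets $\int\<\xi,\varphi_2\>^2=\frac{(n+1)^2}{4}\int f^2$ and $\int\abs{\varphi_2}^2=\big(\frac{(n+1)^2}{4}+\frac{n+1}{2}\big)\int f^2$, since $\abs{\nabla f\ip*\xi}^2$ integrates to $\frac{n+1}{2n}\int\abs{\nabla f}^2$. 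The ratio is exactly $\frac{n+1}{n+3}$. The remaining step is to show that the cross terms and the $f_j$-terms strictly lower the ratio unless all $f_j$-contributions vanish modulo $E_2\cap\{\curl(f\xi)\}$, which yields the equality characterization.
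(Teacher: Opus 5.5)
There is a genuine gap: in neither part do you actually produce the decisive inequality.

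\textbf{Part (1).} You correctly observe that Cauchy--Schwarz applied to $\nabla_X f=\<\xi,T_X\varphi\>$ is useless. Indeed $\int\sum_i\abs{T_{e_i}\varphi}^2=\mu\int\abs{\varphi}^2$ exactly, so it only gives $\mu\int f^2\le\mu\int\abs{\varphi}^2$. The gain is then delegated to an unspecified irreducible splitting that ``should give (1)'', which you yourself flag as the main obstacle.

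It is not even clear that a pointwise upper bound for $\abs{\nabla f}^2$ can reach the stated constants. The natural concrete version of your idea uses $\sum_i\<T_{e_i}\varphi,e_i\ip*\xi\>=(\lambda-\frac{n+1}{2})f$, together with $e_i\ip*\xi\perp\xi$ and $\sum_i\abs{e_i\ip*\xi}^2=\frac{n+1}{2}$. This gives pointwise $\sum_i\abs{T_{e_i}\varphi}^2-\abs{\nabla f}^2\ge\frac{2}{n+1}(\lambda-\frac{n+1}{2})^2f^2$. After integration you get only $\int f^2\le\frac{(n+1)(n+k-2)}{(n+1)(n+k-2)+2(k-1)}\int\abs{\varphi_k}^2$, which is strictly weaker than $\frac{n+k-2}{n+2k-3}$; the same happens for $E_{-k}$.

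\textbf{Part (2).} You verify that $\curl(f\xi)$ attains the ratio $\frac{n+1}{n+3}$. That only shows the constant cannot be improved. The inequality itself, i.e.\ that the ``cross terms'' can only lower the ratio, is left open. In your formulation it would also have to cope with a nontrivial kernel: the map $f\otimes\xi\mapsto\curl(f\xi)$ on $P_1\otimes E_1$ is not injective, since that space has dimension $\frac{n+3}{n+1}\dim E_2$. So the representation of $\varphi_2$ is not unique.

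\textbf{The missing idea} is an integral identity obtained by testing the eigen-equation against $f\xi$. By self-adjointness of $\curl$ and Proposition~\ref{Killing_properties}(4),
\[
\lambda\int f^2=\int\<\curl(f\xi),\varphi\>=\frac{n+1}{2}\int f^2-\int\<\nabla f\ip*\xi,\varphi\>.
\]
Since $\<\nabla f\ip*\xi,\xi\>=0$, you may replace $\varphi$ by $\varphi-f\xi$, whose squared length is $\abs{\varphi}^2-f^2$. One Cauchy--Schwarz, together with $\abs{\nabla f\ip*\xi}\le\abs{\nabla f}$ and $\int\abs{\nabla f}^2=\mu\int f^2$, gives
\[
\Big(\lambda-\frac{n+1}{2}\Big)^2\int f^2\le\mu\Big(\int\abs{\varphi}^2-\int f^2\Big).
\]
With $\lambda-\frac{n+1}{2}=k-1$, resp.\ $-(n+k)$, this is exactly (1); the case $\varphi_1\in E_1$ is trivial since the constant is $1$. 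The point is that the pairing of $\nabla f\ip*\xi$ with $\varphi$ is computed exactly by integration by parts. The spectral identity for $f$ is then used only to bound one Cauchy--Schwarz factor.

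For (2) the same argument works verbatim once you replace $\int\abs{\nabla f}^2=n\int f^2$ by the value you already computed, $\int\abs{\nabla f\ip*\xi}^2=\frac{n+1}{2}\int f^2$ for $f\in P_1$. The equality case is then the equality case of Cauchy--Schwarz, $\varphi_2-f\xi=-C\,\nabla f\ip*\xi$, with $C=\frac{2}{n+1}$ forced by Proposition~\ref{classification}. No finite-dimensional cross-term analysis is needed.
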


\begin{proof}
(1) We first consider $\varphi_k\in E_k$. The case $k=1$ is immediate. For $k\ge 2$ we have $\curl\varphi_k = \left(\frac{n-1}{2}+k\right)\varphi_k$, and hence
\begin{align}
    \int \<\xi,\varphi_k\>^2 &= \Big(\frac{n-1}{2}+k\Big)^{-1}\int \<\xi, \varphi_k\>\<\xi, \curl\varphi_k\>\\
    &= \Big(\frac{n-1}{2}+k\Big)^{-1}\int \<\curl (\<\xi, \varphi_k\>\xi), \varphi_k\>\\
    &= \Big(\frac{n-1}{2}+k\Big)^{-1}\int \Big\<\frac{n+1}{2}\<\xi,\varphi_k\>\xi - \nabla \<\xi,\varphi_k\>\ip *\xi, \varphi_k\Big\>\\
    &= \frac{\frac{n+1}{2}}{\frac{n-1}{2}+k}\int \<\xi,\varphi_k\>^2 - \frac{1}{\frac{n-1}{2}+k} \int \<\nabla \<\xi,\varphi_k\>\ip *\xi, \varphi_k\>.
\end{align}
Proposition~\ref{basic_form}(3) implies $\<\nabla \<\xi,\varphi_k\>\ip *\xi, \xi\> = 0$. Therefore,
\begin{align}
    \int \<\xi,\varphi_k\>^2 &= -\frac{1}{k-1} \int\<\nabla \<\xi,\varphi_k\>\ip *\xi, \varphi_k  \>\\
    &= -\frac{1}{k-1} \int\<\nabla \<\xi,\varphi_k\>\ip *\xi, \varphi_k - \<\xi,\varphi_k\>\xi \>\\
    &\leq \frac{1}{k-1} \Big( \int \abs{\nabla \<\xi,\varphi_k\>\ip *\xi}^2 \Big)^{\frac{1}{2}} \Big( \int \abs{\varphi_k - \<\xi,\varphi_k\>\xi }^2 \Big)^{\frac{1}{2}} \\
    &\leq \frac{1}{k-1} \Big( \int \abs{\nabla \<\xi,\varphi_k\>}^2 \Big)^{\frac{1}{2}} \Big( \int \abs{\varphi_k}^2 - \int \<\xi,\varphi_k\>^2\Big)^{\frac{1}{2}} \\
    &= \frac{1}{k-1} \Big( (k-1)(n+k-2)\int \<\xi,\varphi_k\>^2 \Big)^{\frac{1}{2}} \Big( \int \abs{\varphi_k}^2 - \int \<\xi,\varphi_k\>^2\Big)^{\frac{1}{2}}, 
\end{align}
where we used Cauchy--Schwarz' and H\"older's inequality in the first inequality, Proposition~\ref{basic_form}(1) in the second, and Proposition~\ref{new_eigenfunction}(1) in the last equality. It follows that
\eq{
    \int\<\xi,\varphi_{k}\>^2 \leq \frac{n+k-2}{n+2k-3} \int |\varphi_k|^2.
}

We next consider $\varphi_{-k}\in E_{-k}$ ($k\ge 1$). Since $\curl\varphi_{-k} = -\left(\frac{n-1}{2}+k\right)\varphi_{-k}$, we have
\begin{align}
    \int \<\xi,\varphi_{-k}\>^2 &= -\Big(\frac{n-1}{2}+k\Big)^{-1}\int \<\xi, \varphi_{-k}\>\<\xi, \curl\varphi_{-k}\>\\
    &= -\Big(\frac{n-1}{2}+k\Big)^{-1}\int \<\curl (\<\xi, \varphi_{-k}\>\xi), \varphi_{-k}\>\\
    &= -\Big(\frac{n-1}{2}+k\Big)^{-1}\int \Big\<\frac{n+1}{2}\<\xi,\varphi_{-k}\>\xi - \nabla \<\xi,\varphi_{-k}\>\ip *\xi, \varphi_{-k}\Big\>\\
    &= -\frac{\frac{n+1}{2}}{\frac{n-1}{2}+k}\int \<\xi,\varphi_{-k}\>^2 - \frac{1}{\frac{n-1}{2}+k} \int \<\nabla \<\xi,\varphi_{-k}\>\ip *\xi, \varphi_{-k}\>.
\end{align}
Proposition \ref{basic_form} (3) implies $\<\nabla \<\xi,\varphi_{-k}\>\ip *\xi, \xi\> = 0$. Hence from the previous equation we have
\begin{align}
    \int \<\xi,\varphi_{-k}\>^2 &= -\frac{1}{n+k} \int\<\nabla \<\xi,\varphi_{-k}\>\ip *\xi, \varphi_{-k} \>\\
    &= -\frac{1}{n+k} \int\<\nabla \<\xi,\varphi_{-k}\>\ip *\xi, \varphi_{-k} - \<\xi,\varphi_{-k}\>\xi \>\\
    &\leq \frac{1}{n+k} \Big( \int \abs{\nabla \<\xi,\varphi_{-k}\>\ip *\xi}^2 \Big)^{\frac{1}{2}} \Big( \int \abs{\varphi_{-k} - \<\xi,\varphi_{-k}\>\xi }^2 \Big)^{\frac{1}{2}} \\
    &\leq \frac{1}{n+k} \Big( \int \abs{\nabla \<\xi,\varphi_{-k}\>}^2 \Big)^{\frac{1}{2}} \Big( \int \abs{\varphi_{-k}}^2 - \int \<\xi,\varphi_{-k}\>^2\Big)^{\frac{1}{2}} \\
    &= \frac{1}{n+k} \Big( (k+1)(n+k)\int \<\xi,\varphi_{-k}\>^2 \Big)^{\frac{1}{2}} \Big( \int \abs{\varphi_{-k}}^2 - \int \<\xi,\varphi_{-k}\>^2\Big)^{\frac{1}{2}}, 
\end{align}
where we have used Cauchy--Schwarz' and H\"older's inequality in the first inequality, Proposition~\ref{basic_form} (1) in the second, and Proposition~\ref{new_eigenfunction} (2) in the last equality. It follows that
\eq{
    \int\<\xi,\varphi_{-k}\>^2 \leq \frac{k+1}{n+2k+1} \int |\varphi_{-k}|^2.
}

(2) Now let $\varphi_2\in E_2$. As in (1) we obtain
\eq{\label{Cauchy_E2}
    \int \<\xi,\varphi_2\>^2 &= - \int\<\nabla \<\xi,\varphi_2\>\ip *\xi, \varphi_2 - \<\xi,\varphi_2\>\xi \>\\
    &\leq \Big( \int \abs{\nabla \<\xi,\varphi_2\>\ip *\xi}^2 \Big)^{\frac{1}{2}} \Big( \int \abs{\varphi_2 - \<\xi,\varphi_2\>\xi }^2 \Big)^{\frac{1}{2}}.
}
Since $\<\xi,\varphi_2\>\in P_1$, we compute
\begin{align}
    \int \abs{\nabla \<\xi,\varphi_2\>\ip *\xi}^2 &= \int \abs{\rd \<\xi, \varphi_2\> \w\xi}^2= \int \<\rd \<\xi, \varphi_2\> \w\xi, \rd \<\xi, \varphi_2\> \w\xi\>\\
    &=\int \<\rd \<\xi, \varphi_2\>\w\xi, \rd (\<\xi, \varphi_2\>\xi) - \frac{n+1}{2}\<\xi, \varphi_2\>*\xi\>\\
    &= \int \< \rd^*(\rd \<\xi, \varphi_2\> \w\xi), \<\xi, \varphi_2\>\xi\> = \int \< * \rd *(\rd \<\xi, \varphi_2\> \w\xi), \<\xi, \varphi_2\>\xi\>\\
    &= \int \< \curl (-\nabla \<\xi, \varphi_2\>\ip *\xi), \<\xi, \varphi_2\>\xi\> = \int \< \frac{n+1}{2}\<\xi, \varphi_2\>\xi - \nabla \<\xi, \varphi_2\>\ip *\xi, \<\xi, \varphi_2\>\xi\>\\
    &= \frac{n+1}{2}\int \<\xi, \varphi_2\>^2,
\end{align}
where we used Proposition~\ref{basic_form}(2) in the first and sixth equalities, Proposition~\ref{Killing_properties}(4) in the seventh, and Proposition~\ref{basic_form}(3) in the last.
Combining this with \eqref{Cauchy_E2} yields
\eq{
    \int \<\xi,\varphi_2\>^2 &\leq \Big( \frac{n+1}{2}\int \<\xi, \varphi_2\>^2 \Big)^{\frac{1}{2}} \Big( \int \abs{\varphi_2}^2 - \int \<\xi, \varphi_2\>^2 \Big)^{\frac{1}{2}}.
}
Hence
\eq{
        \int\<\xi,\varphi_{2}\>^2 \leq \frac{n+1}{n+3} \int \abs{\varphi_{2}}^2.
}
The only inequality is \eqref{Cauchy_E2}, where we applied Cauchy--Schwarz' and H\"older's inequality. Thus equality holds if and only if
\eq{
    \varphi_2 - \<\xi,\varphi_2\>\xi = -C\, \nabla \<\xi,\varphi_2\>\ip *\xi \quad\hbox{for some constant}\ C>0.
}
By Proposition~\ref{classification} we have $C=\frac{2}{n+1}$. Taking $f = \frac{2}{n+1}\<\xi,\varphi_2\>\in P_1$ completes the proof.
\end{proof}

\section{Local stability}\label{sec4}

We begin by recalling the Euler--Lagrange equation for $J_1$:
\eq{
    \curl \big( \abs{ \curl\alpha }^{-\frac{2}{n+1}} \, \curl\alpha \big) = \mu\, \curl\alpha \quad\hbox{for some constant}\ \mu.
}
Equivalently, if we set $\beta \coloneqq \abs{ \curl\alpha }^{-\frac{2}{n+1}} \curl\alpha$, then $\beta$ satisfies the conformally invariant equation
\eq{
    \curl \beta =\mu \, \abs{\beta}^{\frac{2}{n-1}} \beta.
}
Positive and negative Killing $\frac{n-1}{2}$-forms solve this equation, and by conformal invariance so do their conformal images. The goal of this section is to show that this conformal family is locally stable for $J_1$.

\medskip
We parametrize conformal maps of $\S^n$ as follows. For $b\in \mathbb{B}^{n+1}$ with $\abs{b}<1$ define
\eq{\label{eq_conformal_transformation}
    \Xi_b(x) = \frac{(1-|b|^2)x+2(1+\langle b,x\rangle)b}{1+2\langle b,x\rangle+|b|^2}.
}
One easily checks
that $\Xi_b$ is conformal and
\eq{
    \langle (\Xi_{b})_* (v), (\Xi_{b})_* (w) \rangle = \left( \frac {1-|b|^2 }{ (1+\langle x, b\rangle)^2} \right)^2 \langle v, w\rangle,
    \qquad
    (\det D\Xi_{b})^ {\frac 1n} = \frac  {1-|b|^2 }{ (1+\langle x, b\rangle)^2}.
}
In particular, the conformal family generated by positive Killing forms can be written as
\eq{\label{explicit_M}
    {\mathcal M}= \left\{(\Xi_{b})^*\xi \, |\, 0\not\equiv\xi \in E_1,\ b\in \mathbb{B}^{n+1}\right\}.
}

By conformal invariance it suffices to analyze the second variation of $J_1$ at a fixed normalized positive Killing form. Fix $\xi\in E_1$ with $\abs{\xi}=1$.

For later use, define the linear map
\eq{\label{Phi}
    \Phi_\xi \coloneqq  \Phi_\xi (f) \coloneqq \frac{n+1}{2} f \xi - \nabla f\ip *\xi \quad \hbox{for}\quad f\in P_1.
}
By Proposition~\ref{classification}, $\Phi_\xi(f)\in E_2$ for every $f\in P_1$. We set
\eq{
    Q\coloneqq Q_\xi\coloneqq   \{\Phi_\xi(f)\,|\, f\in P_1\} \subset E_2.
}

Finally, note that $J_1$ depends only on $\curl\alpha=*\rd\alpha$, and hence
\eq{
    J_1(\alpha) = J_1(\alpha + \beta) \quad\hbox{for any closed}\ \beta.
}
Thus $J_1$ (and its variations) are naturally defined on the quotient space $\Omega^{\frac{n-1}{2}}/\,{\rm ker}(\rd)$, and we will freely replace a class $[\alpha]$ by a convenient representative.

To identify the nondegenerate directions in the second variation, we first describe the tangent space of the conformal family $\mathcal M$.

\begin{lemma}\label{tangent_space}
Let $\xi\in E_1$. Modulo closed forms, the tangent space of $\mathcal M$ at $\xi$ is
\eq{
    T_\xi\mathcal{M} = E_1 \oplus Q_\xi \quad ({\rm mod}\ {\rm ker}(\rd)).
}
\end{lemma}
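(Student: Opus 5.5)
The tangent space is spanned by two kinds of variations: scaling and rotation of $\xi$ inside the linear space $E_1$, and conformal deformations $b\mapsto(\Xi_b)^*\xi$ near $b=0$. Varying $\xi$ within $E_1$ contributes exactly $E_1$, since $\mathcal M\supset E_1\setminus\{0\}$ and $E_1$ is a linear space. For the conformal directions we differentiate $(\Xi_{tb})^*\xi$ at $t=0$. Since $\Xi_0=\mathrm{id}$, this derivative is the Lie derivative $\mathcal L_V\xi$, where $V=\frac{d}{dt}\big|_{t=0}\Xi_{tb}$. Differentiating \eqref{eq_conformal_transformation} gives
\[
V(x)=2\big(b-\langle b,x\rangle x\big)=2\nabla f,\qquad f(x)=\langle b,x\rangle\in P_1 .
\]
So $V$ is, up to the factor $2$, the gradient conformal field of a first spherical harmonic, and as $b$ ranges over $\R^{n+1}$, $f$ ranges over all of $P_1$.

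Next we compute $\mathcal L_{\nabla f}\xi$ modulo closed forms using Cartan's formula, Proposition~\ref{basic_form}(5):
\[
\mathcal L_{\nabla f}\xi=\rd(\nabla f\ip\xi)+\nabla f\ip\rd\xi\equiv \nabla f\ip\rd\xi \pmod{\ker\rd}.
\]
Since $\rd\xi=*\curl\xi=\frac{n+1}{2}*\xi$ (using $*^2=\mathrm{id}$), this yields
\[
\mathcal L_{\nabla f}\xi\equiv\frac{n+1}{2}\,\nabla f\ip*\xi \pmod{\ker\rd}.
\]
In the proof of Proposition~\ref{classification} we saw that $f\xi+\nabla f\ip*\xi\in\ker(\curl)=\ker\rd$. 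Hence modulo closed forms $\nabla f\ip*\xi\equiv -f\xi$, and therefore
\[
\Phi_\xi(f)=\frac{n+1}{2}f\xi-\nabla f\ip*\xi\equiv \frac{n+3}{2}\,f\xi\equiv -\frac{n+3}{2}\,\nabla f\ip*\xi .
\]
It follows that $\mathcal L_{\nabla f}\xi\equiv -\frac{n+1}{n+3}\Phi_\xi(f)$, a nonzero multiple of $\Phi_\xi(f)$. The conformal directions therefore span exactly $Q_\xi$ modulo closed forms.

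Two points remain. First, the tangent map is defined on the product parametrization $(\xi',b)\mapsto (\Xi_b)^*\xi'$ from \eqref{explicit_M}, so by the chain rule its image at $(\xi,0)$ is $E_1+Q_\xi$. Second, the sum is direct: $E_1\perp E_2\supset Q_\xi$ in $L^2$, because these are distinct curl eigenspaces. Moreover, $E_1$ and $E_2$ consist of co-closed forms, which are $L^2$-orthogonal to $\ker\rd=\rd\Omega^{\frac{n-3}{2}}$ by \eqref{sphere_Hodge}. So no nonzero element of $E_1\oplus Q_\xi$ is closed, and the sum stays direct modulo $\ker\rd$.

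The main thing to get right is the identification $\mathcal L_V\xi \in Q_\xi$ modulo closed forms, which rests on the closedness identity $\curl(f\xi+\nabla f\ip*\xi)=0$ from Proposition~\ref{Killing_properties}(4)--(5). A secondary check is that $\Phi_\xi:P_1\to Q_\xi$ is injective, so that $\dim Q_\xi=n+1$ matches the $n+1$ conformal parameters. Injectivity holds because $\langle\xi,\Phi_\xi(f)\rangle=\frac{n+1}{2}f$ by Proposition~\ref{basic_form}(3) and $\abs{\xi}=1$. This dimension count is not strictly needed for the equality of spaces, but it confirms that the family is nondegenerate.
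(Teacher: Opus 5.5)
Your proof is correct and follows essentially the same route as the paper. Both arguments differentiate $(\Xi_{tb})^*\xi$ to obtain $\mathcal L_{2\nabla f}\xi$ with $f\in P_1$, apply Cartan's formula, and use the closedness of $f\xi+\nabla f\ip*\xi$ to land on $-\frac{n+1}{n+3}\Phi_\xi(f)$ modulo $\ker(\rd)$. The only difference is that you discard the exact term $\rd(\nabla f\ip\xi)$ at once, whereas the paper first computes $\mathcal L_{\nabla f}\xi=-\frac{n-1}{2}f\xi+\nabla f\ip*\xi$ explicitly and then decomposes it. Your added checks of directness and of the injectivity of $\Phi_\xi$ are fine; for a non-normalized $\xi$, use that $\abs{\xi}$ is a nonzero constant.
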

\begin{proof}
Choose a variation of the conformal parameter of the form $b(t)=t e_i$. Differentiating at $t=0$, one checks
\eq{
    \frac{\rd}{\rd t}\Big|_{t=0}\left(\frac  {1-|b|^2 }{ (1+\langle x, b\rangle)^2} \right)^{\frac {1}{2}} = -2x_i,
}
and
\eq{
    \frac{\rd}{\rd t}\Big|_{t=0}\Xi_b(x) = e_i - x_ix = 2\nabla x_i,
}
where $\nabla x_i$ is a conformal Killing vector field on $\S^n$. Therefore, at $\xi\in E_1$ the tangent space $T_\xi \mathcal{M}$ is generated (modulo ${\rm ker}(\rd)$) by the infinitesimal conformal actions
\eq{
    \frac{\rd}{\rd t}\Big|_{t=0}(\Xi_{b})^*\xi = 2\mathcal{L}_{\nabla x_i}\xi,
}
together with the directions in $E_1$.

Let $f \coloneqq 2x_i\in P_1$. By Proposition~\ref{basic_form}(4)(5) we compute
\begin{align}
    \mathcal{L}_{\nabla f}\xi &= \rd (\nabla f \ip\xi) + \nabla f \ip \rd\xi\\
    &= e_i^\flat\w\nabla_{e_i}(\nabla f \ip\xi) + \frac{n+1}{2}\nabla f\ip*\xi\\
    &= e_i^\flat\w(\nabla_{e_i}\nabla f \ip\xi + \nabla f \ip \nabla_{e_i}\xi) + \frac{n+1}{2}\nabla f\ip*\xi\\
    &= e_i^\flat\w(-f e_i\ip\xi + \nabla f \ip e_i\ip*\xi) + \frac{n+1}{2}\nabla f\ip*\xi\\
    &= -\frac{n-1}{2}f\xi + \nabla f \ip*\xi - \nabla f\ip e_i^\flat\w e_i\ip*\xi + \frac{n+1}{2}\nabla f\ip*\xi\\
    &= -\frac {n-1} 2 f \xi + \nabla f \ip* \xi, \label{vectors}
\end{align}
where we used Proposition~\ref{basic_form}(7) in the fifth equality and Proposition~\ref{basic_form}(6) in the fifth and last.

Finally, each direction in \eqref{vectors} admits the decomposition
\eq{
    -\frac {n-1} 2 f \xi + \nabla f \ip* \xi = \frac{2}{n+3} \left[ -\frac{n+1}{2} \Big( \frac{n+1}{2}f\xi - \nabla f \ip *\xi \Big) + (f\xi + \nabla f \ip *\xi )\right] \in Q_\xi \oplus V_2,
}
where $V_2\subset {\rm ker}(\rd)$ as shown in the proof of Proposition~\ref{classification}. This proves that, modulo closed forms, the tangent space is $E_1\oplus Q_\xi$.
\end{proof}

\begin{corollary}
\label{tangent_perp}
As a consequence, we have
\eq{
     T_\xi\mathcal{M}^\perp &= {\rm ker}(\rd)^\perp \cap (E_1 \oplus Q_\xi)^\perp \\
    &= (E_2 \cap Q_\xi^\perp) \oplus (E_{-1}\oplus E_{3}) \oplus (E_{-2}\oplus E_{4}) \oplus \cdots \oplus
    (E_{-k}\oplus E_{k+2}) \oplus\cdots
}
\end{corollary}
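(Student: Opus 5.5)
The plan is to combine Lemma~\ref{tangent_space} with the $L^2$-orthogonal spectral decomposition of co-closed forms into curl eigenspaces. By the Hodge decomposition \eqref{sphere_Hodge}, ${\rm ker}(\rd)^\perp={\rm ker}(\rd^*)=\rd^*\Omega^{\frac{n+1}{2}}$. On this space $\curl$ is injective and self-adjoint, with the nonzero spectrum described in Proposition~\ref{eigenvalue_and_multi}. Hence
\eq{
{\rm ker}(\rd)^\perp=\overline{\bigoplus_{k\ge1}(E_k\oplus E_{-k})}
}
as an $L^2$-orthogonal sum. Distinct eigenspaces are orthogonal because $\curl$ is self-adjoint and the eigenvalues $\pm(\frac{n-1}{2}+k)$ are pairwise distinct. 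Completeness comes from the discreteness of the spectrum of the Hodge Laplacian restricted to co-closed forms, together with \eqref{combination}.

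Next I identify the tangent space. Since $T_\xi\mathcal M$ is only defined modulo closed forms, I take as its representative in ${\rm ker}(\rd)^\perp$ the space $E_1\oplus Q_\xi$. This is legitimate because $E_1$ and $Q_\xi\subset E_2$ already lie in ${\rm ker}(\rd^*)$, by Proposition~\ref{classification}. Lemma~\ref{tangent_space} then gives the first equality:
\eq{
T_\xi\mathcal M^\perp={\rm ker}(\rd)^\perp\cap(E_1\oplus Q_\xi)^\perp.
}

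Intersecting the spectral decomposition with $(E_1\oplus Q_\xi)^\perp$ removes all of $E_1$. Inside $E_2$ it leaves exactly $E_2\cap Q_\xi^\perp$, since $Q_\xi\subset E_2$ and $E_2$ is orthogonal to every other eigenspace. Every other eigenspace $E_{\pm k}$ is untouched, because it is orthogonal both to $E_1$ and to $Q_\xi$. What remains is
\eq{
(E_2\cap Q_\xi^\perp)\oplus E_{-1}\oplus\bigoplus_{k\ge3}E_k\oplus\bigoplus_{k\ge2}E_{-k}.
}
Regrouping the terms as $(E_{-k}\oplus E_{k+2})$ for $k\ge1$ gives the stated form. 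This pairing is exactly the one in Corollary~\ref{cor3.2}, under which the cross terms $\int\<\xi,\varphi_k\>\<\xi,\varphi_{-j}\>$ survive only when $k=j+2$; it is presumably chosen for the later second-variation analysis.

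The only point needing care is this choice of representative modulo ${\rm ker}(\rd)$. I must check that the tangent directions $\mathcal L_{\nabla f}\xi$ differ from elements of $Q_\xi$ only by elements of $V_2\subset{\rm ker}(\rd)$, which is exactly the decomposition established in Lemma~\ref{tangent_space}. Once that is in place, everything else is bookkeeping with orthogonal eigenspaces.
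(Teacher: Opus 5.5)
Your proposal is correct and follows essentially the same route as the paper. The paper states the corollary as an immediate consequence of Lemma~\ref{tangent_space} combined with the $L^2$-orthogonal decomposition of ${\rm ker}(\rd)^\perp={\rm ker}(\rd^*)$ into the curl eigenspaces $E_{\pm k}$, a decomposition it writes out explicitly in the proof of Theorem~\ref{spectral_gap}. Your extra check, that the $V_2\subset{\rm ker}(\rd)$ components of $\mathcal L_{\nabla f}\xi$ are orthogonal to ${\rm ker}(\rd)^\perp$, is exactly what justifies taking $E_1\oplus Q_\xi$ as the representative of the tangent space.
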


\begin{proposition}\label{second_variation}
The (formal) second variation of $J_1$ on standard $\S^n$ at $\xi\in E_{1}$ (with normalization $|\xi|=1$) is given by
\eq{
  \frac{\rd^2}{\rd t^2}\Big|_{t=0}J_1(\xi+t\varphi) = 2\omega_{n}^{\frac{1-n}{n}}\Bigg\{ \omega_n^{-1}\Big( \int \<\xi, \varphi\> \Big)^2 + \frac{2}{n+1} \int \abs{\curl \varphi}^2 - \frac{4}{(n+1)^2} \int \<\xi, \curl \varphi \>^2 - \int \<\curl\varphi, \varphi\> \Bigg\}.
   }
\end{proposition}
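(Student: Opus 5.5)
The plan is a direct Taylor expansion of numerator and denominator of $J_1(\xi+t\varphi)$ to second order in $t$, followed by the quotient rule. Throughout, $\varphi$ is taken smooth, as the word ``formal'' allows. Set $\lambda\coloneqq\frac{n+1}{2}$, $q\coloneqq\frac{2n}{n+1}$ and $\psi\coloneqq\curl\varphi$. By Proposition~\ref{Killing_eigenform} we have $\curl\xi=\lambda\xi$, and $|\xi|=1$, so $|\curl\xi|\equiv\lambda$ is constant and nonvanishing. This makes $|\cdot|^q$ smooth near $\curl\xi$ and the pointwise expansion legitimate.

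\emph{Numerator.} For a nowhere vanishing form $a$ we use
\[
|a+tb|^q=|a|^q+qt|a|^{q-2}\langle a,b\rangle+\frac{q}{2}t^2|a|^{q-2}\Big(|b|^2+(q-2)\frac{\langle a,b\rangle^2}{|a|^2}\Big)+O(t^3).
\]
We apply it with $a=\lambda\xi$ and $b=\psi$, and integrate to get $A(t)\coloneqq\int|\curl(\xi+t\varphi)|^q=A_0+tA_1+t^2A_2+O(t^3)$, where
\[
A_0=\lambda^q\omega_n,\qquad A_1=q\lambda^{q-1}\int\langle\xi,\psi\rangle,\qquad A_2=\frac q2\lambda^{q-2}\Big(\int|\psi|^2+(q-2)\int\langle\xi,\psi\rangle^2\Big).
\]
Self-adjointness of $\curl$ gives $\int\langle\xi,\psi\rangle=\lambda\int\langle\xi,\varphi\rangle$. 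Then $N(t)=A(t)^{2/q}$, with exponent $2/q=\frac{n+1}{n}$, is expanded by the chain rule:
\[
N'(0)=\tfrac2q A_0^{2/q-1}A_1,\qquad N''(0)=\tfrac2q A_0^{2/q-1}\,2A_2+\tfrac2q\big(\tfrac2q-1\big)A_0^{2/q-2}A_1^2.
\]

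\emph{Denominator.} By self-adjointness, $D(t)=\int\langle\curl(\xi+t\varphi),\xi+t\varphi\rangle=\lambda\omega_n+2t\lambda\int\langle\xi,\varphi\rangle+t^2\int\langle\psi,\varphi\rangle$.

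\emph{Combining.} We insert these into
\[
J''=\frac{N''}{D}-\frac{2N'D'}{D^2}-\frac{ND''}{D^2}+\frac{2N(D')^2}{D^3}.
\]
As a consistency check, the first-order terms satisfy $N'/N=D'/D$ at $t=0$, which is the criticality of $\xi$. After this we collect terms.
\begin{itemize}
\item The term from $A_2$ yields $\int|\psi|^2$ and $\int\langle\xi,\psi\rangle^2$. The coefficient $q-2=-\frac{2}{n+1}$ produces, after normalisation, the coefficients $\frac{2}{n+1}$ and $-\frac{4}{(n+1)^2}$.
\item The term $-ND''/D^2$ yields $-\int\langle\curl\varphi,\varphi\rangle$.
\item All terms quadratic in $\int\langle\xi,\varphi\rangle$, namely the $A_1^2$ term, $-2N'D'/D^2$ and $2N(D')^2/D^3$, must collapse to a single multiple $\omega_n^{-1}\big(\int\langle\xi,\varphi\rangle\big)^2$.
\end{itemize}
Finally we factor out the common prefactor. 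We have $N(0)=\lambda^2\omega_n^{2/q}$ and $D(0)=\lambda\omega_n$, and $\lambda^{q-2}$ together with the powers of $\omega_n$ should combine into $2\omega_n^{\frac{1-n}{n}}$.

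The main obstacle is purely algebraic: tracking the three contributions proportional to $\big(\int\langle\xi,\varphi\rangle\big)^2$. Their exponents of $\lambda$ and $\omega_n$ differ a priori, and one must verify that the coefficients $\frac2q\big(\frac2q-1\big)q^2\lambda^2$, $-2\cdot 2\lambda\cdot\frac2q q\lambda$ and $2\cdot 4\lambda^2$, each over the appropriate power of $\lambda\omega_n$, add up to exactly $\omega_n^{-1}$ times the common prefactor. This is where I would recompute most carefully, using $\frac2q-1=\frac1n$.
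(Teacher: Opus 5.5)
Your proposal is correct and takes essentially the paper's approach: expand numerator and denominator to second order at the critical point $\xi$ and apply the quotient rule. Your $A_0$, $A_1$, $A_2$, $N''(0)$ and $D(t)$ agree with the paper's $U''$, $V''$, $U(\xi)$ and $V(\xi)$. The step you flag as the main obstacle is actually immediate: because $N'=ND'/D$ at $t=0$ (the criticality you already note), the terms $-2N'D'/D^2$ and $2N(D')^2/D^3$ cancel exactly, which is why the paper writes $J''=(U''V-UV'')/V^2$ from the start, so only the $A_1^2$ term survives, giving $(n+1)\omega_n^{\frac{1-n}{n}}\big(\int\langle\xi,\varphi\rangle\big)^2/(\lambda\omega_n)=2\omega_n^{\frac{1-n}{n}}\,\omega_n^{-1}\big(\int\langle\xi,\varphi\rangle\big)^2$.
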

\begin{proof}
The computation is elementary; we include it for completeness.

In general, for a quotient functional $J=U/V$, the Euler--Lagrange equation is $U'V-UV'=0$. At a critical point this gives the second variation formula
\eq{
    J'' = \frac{U''V-UV''}{V^2}.
}
Here
\eq{
    U(\alpha)=\Big(\int\abs{\curl\alpha}^{\frac{2n}{n+1}}\Big)^{\frac{n+1}{n}},\quad V(\alpha)=\int\<\curl\alpha,\alpha\>.
}
A direct computation of the second variations of $U$ and $V$ at $\xi\in E_{1}$ yields
\eq{
    U''(\xi)(\varphi,\varphi) &= (n+1)\omega_{n}^{\frac{1-n}{n}}\Big(\int\<\xi,\varphi\>\Big)^2 - \frac{4}{n+1}\omega_{n}^{\frac{1}{n}}\int\<\xi,\curl\varphi\>^2 + 2\omega_{n}^{\frac{1}{n}}\int\abs{\curl\varphi}^2,\\
    V''(\xi)(\varphi,\varphi) &= 2\int\<\curl\varphi,\varphi\>.
}
Together with
\eq{
    U(\xi) = \frac{(n+1)^2}{4}\omega_{n}^{\frac{1+n}{n}},\quad V(\xi) = \frac{n+1}{2}\omega_{n},
}
this gives the stated formula.
\end{proof}

We now prove Theorem~\ref{thm1.1}. The key input is a spectral gap for the second variation of $J_1$ in directions transverse to the conformal symmetry manifold $\mathcal M$.

\begin{proof}[Proof of Theorem~\ref{thm1.1}]
By conformal invariance, it suffices to consider the case where $\alpha$ is close (in the $\curl$-norm appearing in the statement) to a fixed normalized positive Killing form $\xi\in E_1$.
After modulating by the conformal parameters, i.e. projecting away the tangent directions $T_\xi\mathcal M$, the quadratic form given by the second variation of $J_1$ is strictly positive by Theorem~\ref{spectral_gap} below.
A standard quantitative argument (following \cite{Figalli_Zhang_20}) then upgrades this spectral gap to the nonlinear estimate in Theorem~\ref{thm1.1}; in our setting the implementation is identical to \cite[Appendix~A]{WZ25}.
\end{proof}

\begin{theorem}\label{spectral_gap}
For any $\xi\in E_1$ with $\abs{\xi}=1$, there exists $c(n)>0$, such that for any $\varphi \in W^{1,2}$ with  $\varphi\in T_\xi\mathcal{M}^{\perp}$ we have
\eq{
    \frac{2}{n+1} \int \abs{\curl \varphi}^2 - \frac{4}{(n+1)^2} \int \<\xi, \curl \varphi \>^2 - \int \<\curl\varphi, \varphi\> \geq c(n)\int\abs{\curl\varphi}^2.
}
It implies that there exists $c_1(n)>0$ such that
\eq{
    \delta^2 J_1 (\xi) (\varphi, \varphi) \ge c_1(n)\int | \curl \varphi|^2, \quad\forall \varphi\in T_\xi\mathcal{M}^{\perp}.
}
\end{theorem}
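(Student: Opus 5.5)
Expand $\varphi\in T_\xi\mathcal M^\perp$ in curl eigenforms using Corollary~\ref{tangent_perp}:
\[
\varphi=\varphi_2+\sum_{k\ge1}(\varphi_{-k}+\varphi_{k+2}),\qquad \varphi_2\in E_2\cap Q_\xi^\perp .
\]
Write $\lambda_k=\frac{n-1}{2}+k$ for the eigenvalue on $E_{\pm k}$. The terms $\int|\curl\varphi|^2$ and $\int\<\curl\varphi,\varphi\>$ are diagonal in this decomposition, with contributions $\lambda_k^2\|\varphi_{\pm k}\|^2$ and $\pm\lambda_k\|\varphi_{\pm k}\|^2$. The only cross terms come from $\int\<\xi,\curl\varphi\>^2$. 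By Proposition~\ref{new_eigenfunction}, $\<\xi,\varphi_k\>\in P_{k-1}$ and $\<\xi,\varphi_{-k}\>\in P_{k+1}$. Since distinct $P_j$ are $L^2$-orthogonal, the only surviving cross terms pair $E_{-k}$ with $E_{k+2}$ (Corollary~\ref{cor3.2}); these are exactly the blocks of the corollary. So the quadratic form splits into the block $\varphi_2$ and the blocks $(\varphi_{-k},\varphi_{k+2})$, $k\ge1$. It suffices to prove a uniform positive lower bound $c\int|\curl\psi|^2$ on each block, with $c$ independent of $k$.

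\textbf{Block $E_2\cap Q_\xi^\perp$.} Here the form equals $\big(\frac{2}{n+1}\lambda_2^2-\lambda_2\big)\|\varphi_2\|^2-\frac{4\lambda_2^2}{(n+1)^2}\int\<\xi,\varphi_2\>^2$, with $\lambda_2=\frac{n+3}{2}$. Writing the bilinear form $B(\varphi,\psi)=\int\<\xi,\varphi\>\<\xi,\psi\>$ on $E_2$, Proposition~\ref{crucial_estimate}(2) says its maximal eigenvalue is $\frac{n+1}{n+3}$, attained exactly on $Q_\xi$. On $Q_\xi^\perp$ I then need the next eigenvalue of $B$. I would compute it via the classification of Proposition~\ref{classification}: write $\varphi_2=\sum_i\curl(f_i\xi_i)$ and reduce to a finite-dimensional matrix computation on $P_1\otimes E_1$. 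Alternatively, I would redo the Cauchy--Schwarz argument of Proposition~\ref{crucial_estimate} with the orthogonality constraint to get a strictly smaller constant $\theta<\frac{n+1}{n+3}$. Any bound with $\frac{4\lambda_2^2}{(n+1)^2}\theta<\frac{2}{n+1}\lambda_2^2-\lambda_2=\frac{(n+3)}{2}\cdot\frac{2}{n+1}$ suffices. Note that with $\theta=\frac{n+1}{n+3}$ the two sides are exactly equal, which is consistent with $Q_\xi$ being the kernel.

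\textbf{Blocks $(\varphi_{-k},\varphi_{k+2})$.} Set $a=\|\varphi_{-k}\|$ and $b=\|\varphi_{k+2}\|$. The diagonal part is
\[
\Big(\tfrac{2}{n+1}\lambda_k^2+\lambda_k\Big)a^2+\Big(\tfrac{2}{n+1}\lambda_{k+2}^2-\lambda_{k+2}\Big)b^2 .
\]
The subtracted term is $\frac{4}{(n+1)^2}\int\big(-\lambda_k\<\xi,\varphi_{-k}\>+\lambda_{k+2}\<\xi,\varphi_{k+2}\>\big)^2$. I bound it using $(x+y)^2\le(1+\epsilon)x^2+(1+\epsilon^{-1})y^2$, or directly through the $2\times2$ determinant, together with Proposition~\ref{crucial_estimate}(1):
\[
\int\<\xi,\varphi_{-k}\>^2\le\tfrac{k+1}{n+2k+1}a^2,\qquad \int\<\xi,\varphi_{k+2}\>^2\le\tfrac{n+k}{n+2k+1}b^2 .
\]
Multiplied by $\frac{4}{(n+1)^2}$, these are roughly $\frac{\lambda^2}{n+1}$-sized contributions against the $\frac{2}{n+1}\lambda^2$ diagonal. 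Taking a supremum over $a,b$ reduces positivity to an explicit $2\times2$ matrix inequality in $k,n$. Both diagonal ratios tend to $\frac12$, leaving margin $\sim\frac{\lambda^2}{n+1}$ as $k\to\infty$, which gives uniformity. For small $k$ I check the determinant by hand, using that $\frac{k+1}{n+2k+1}+\frac{n+k}{n+2k+1}=1$ (the Cauchy--Schwarz cross term is controlled by the sum of the two projections, since both are $\langle\xi,\cdot\rangle$ of a common function in $P_{k+1}$).

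\textbf{Conclusion and main obstacle.} Summing the blocks gives the first inequality. For the second, the extra term $\omega_n^{-1}(\int\<\xi,\varphi\>)^2$ in Proposition~\ref{second_variation} is nonnegative, so Proposition~\ref{second_variation} yields it with $c_1=2\omega_n^{\frac{1-n}{n}}c$. I expect the main obstacle to be the $E_2\cap Q_\xi^\perp$ block: it needs the second eigenvalue of $B$ on $E_2$, i.e. a refinement of the sharp estimate. The smallest-$k$ cross blocks may also be tight and require the exact determinant rather than crude bounds.
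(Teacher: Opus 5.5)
Your proposal is correct and follows essentially the paper's proof. It uses the same block splitting via Proposition~\ref{new_eigenfunction} and Corollary~\ref{cor3.2}, Proposition~\ref{crucial_estimate} on each block, and Proposition~\ref{second_variation} for the second claim. The only difference in the cross blocks is that the paper uses the crude bound $\<\xi,\curl\varphi\>^2\le\abs{\curl\varphi}^2$ for $k\ge 2$ and reserves your $2\times 2$ determinant argument for $k=1$, where the crude bound degenerates at $n=3$.

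The $E_2\cap Q_\xi^\perp$ block, which you flag as the main obstacle, needs no second-eigenvalue computation. You already noted that the maximum $\frac{n+1}{n+3}$ of $B$ on the finite-dimensional space $E_2$ is attained exactly on $Q_\xi$. So the quadratic form is nonnegative on $E_2$ and vanishes only on $Q_\xi$, hence is strictly positive on the unit sphere of $E_2\cap Q_\xi^\perp$. Compactness of that sphere then gives the uniform constant, which is exactly the paper's argument.
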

\begin{proof}
Define the quadratic form
\eq{
    G(\varphi) \coloneqq \frac{2}{n+1} \int \abs{\curl \varphi}^2 - \frac{4}{(n+1)^2} \int \<\xi, \curl \varphi \>^2 - \int \<\curl\varphi, \varphi\>.
}
Since $\curl$ is self-adjoint, $G$ depends only on the $L^2$-projection of $\varphi$ onto ${\rm ker}(\rd)^\perp$, i.e.
$G(\varphi)=G({\rm proj}_{{\rm ker}(\rd)^\perp}(\varphi))$.

By Lemma~\ref{tangent_space} and Corollary~\ref{tangent_perp} we have the orthogonal decomposition
\eq{
    L^2(\Omega^{\frac{n-1}{2}}) = {\rm ker}(\rd) \oplus (E_1 \oplus Q) \oplus (E_2 \cap Q^\perp) \oplus (E_{-1}\oplus E_{3}) \oplus (E_{-2}\oplus E_{4}) \oplus \cdots,
}
where $Q=Q_\xi$. Moreover, by Corollary~\ref{cor3.2} it suffices to estimate $G$ on each summand separately, i.e.
\eq{
    G = G|_{E_2 \cap Q^\perp} + G|_{E_{-1}\oplus E_{3}} + G|_{E_{-2}\oplus E_{4}} + \cdots.
}

For $k\ge 2$ and $\varphi\in E_{-k}\oplus E_{k+2}$, Cauchy--Schwarz gives
\eq{
    G(\varphi) &\geq \frac{2}{n+1} \int \abs{\curl \varphi}^2 - \frac{4}{(n+1)^2} \int \abs{\curl \varphi}^2 - \Big( \frac{n+1}{2} + k + 1 \Big)^{-1} \int \<\curl\varphi, \varphi\>\\
    &\geq \Big( \frac{2}{n+1} - \frac{4}{(n+1)^2} - \frac{2}{n+7} \Big)\int \abs{\curl \varphi}^2\\
    &= \frac{8(n-2)}{(n+1)^2(n+7)} \int \abs{\curl \varphi}^2 \eqcolon c_{1}(n)\int \abs{\curl \varphi}^2.
}

Next, consider $\varphi\in E_2 \cap Q^\perp$. Using Proposition~\ref{crucial_estimate}(2), we obtain
\eq{
    G(\varphi) &= \frac{2}{n+1}\cdot \frac{(n+3)^2}{4} \int \abs{\varphi}^2 - \frac{4}{(n+1)^2}\cdot \frac{(n+3)^2}{4} \int \<\xi, \varphi \>^2 - \frac{n+3}{2} \int \<\xi, \varphi \>^2\\
    &\geq \frac{(n+3)^2}{2(n+1)} \int \abs{\varphi}^2 - \frac{(n+3)^2}{(n+1)^2}\cdot \frac{n+1}{n+3} \int \abs{\varphi}^2 - \frac{n+3}{2} \int \abs{\varphi}^2 = 0,
}
with equality if and only if
\eq{
    \varphi_2 = \frac{n+1}{2}f\xi - \nabla f\ip *\xi
}
for some $f\in P_1$. In particular, $G(\varphi)\ge 0$, and $G(\varphi)=0$ holds if and only if $\varphi\in Q$. Therefore $G(\varphi)>0$ on $E_2\cap Q^\perp$. Since $E_2$ is finite-dimensional and $G$ is quadratic, there exists $c_{2}(n)>0$ such that
\eq{
    G(\varphi)\geq c_{2}(n)\int \abs{\varphi}^2, \quad \forall \,\varphi \in E_2\cap Q^\perp.
}

Finally, consider $\varphi\in E_{-1}\oplus E_{3}$ and decompose
\eq{
    \varphi = -\frac{2}{n+1}\varphi_{-1} + \frac{2}{n+5}\varphi_{3}
}
so that
\eq{
    \curl \varphi = \varphi_{-1} + \varphi_{3}.
}
Using Proposition~\ref{crucial_estimate}(1) we compute
\begin{align}
    G(\varphi) &= \frac{2}{n+1} \int \abs{\curl \varphi}^2 - \frac{4}{(n+1)^2} \int \<\xi, \curl \varphi \>^2 - \int \<\curl\varphi, \varphi\>\\
    &= \frac{2}{n+1} \left( \int \abs{\varphi_{-1}}^2 + \int \abs{\varphi_{3}}^2 \right) - \frac{4}{(n+1)^2} \int (\<\xi,\varphi_{-1}\> + \<\xi,\varphi_{3}\>)^2 + \frac{2}{n+1}\int \abs{\varphi_{-1}}^2 - \frac{2}{n+5} \int \abs{\varphi_{3}}^2\\
    &= \frac{4}{n+1} \int \abs{\varphi_{-1}}^2 + \frac{8}{(n+1)(n+5)} \int \abs{\varphi_{3}}^2 - \frac{4}{(n+1)^2} \int (\<\xi,\varphi_{-1}\> + \<\xi,\varphi_{3}\>)^2 \\
    &\geq \frac{4}{n+1}\cdot \frac{n+3}{2} \int \<\xi,\varphi_{-1}\>^2 + \frac{8}{(n+1)(n+5)}\cdot \frac{n+3}{n+1} \int \<\xi,\varphi_{3}\>^2 - \frac{4}{(n+1)^2} \int (\<\xi,\varphi_{-1}\> + \<\xi,\varphi_{3}\>)^2 \\
    &= \frac{2(n^2+4n+1)}{(n+1)^2} \int \<\xi,\varphi_{-1}\>^2 + \frac{4}{(n+1)(n+5)} \int \<\xi,\varphi_{3}\>^2 - \frac{8}{(n+1)^2} \int \<\xi,\varphi_{-1}\>\<\xi,\varphi_{3}\>.
\end{align}
It is then straightforward to conclude that $G(\varphi)\geq c_{3}(n)\int\abs{\curl\varphi}^2$ for some $c_{3}(n)>0$.

Taking $c(n)\coloneqq \min\{c_{1}(n),c_{2}(n),c_{3}(n)\}>0$ completes the proof.
\end{proof}

\section{The second form-Sobolev inequality}\label{sec5}

We now turn to the second Sobolev quotient
\eq{
    J_2(\alpha) \coloneqq \frac{\Big(\int \abs{\curl \alpha}^{\frac{2n}{n+1}} \,\rdV\Big)^{\frac{n+1}{n}}}{\inf_{\phi\in\Omega^{\frac{n-3}{2}}} \Big(\int \abs{\alpha - \rd\phi}^{\frac{2n}{n-1}} \,\rdV\Big)^{\frac{n-1}{n}}},
    \qquad \alpha\not\equiv 0,
}
with sharp constant $S_2\coloneqq \inf_{\alpha\not\equiv 0} J_2(\alpha)$.
Equivalently, after replacing $\alpha-\rd\phi$ by $\alpha$, one may write
\eq{
    \inf_{\rd^*(\abs{\alpha}^{\frac{2}{n-1}}\alpha)=0} \frac{\Big(\int \abs{\curl \alpha}^{\frac{2n}{n+1}} \,\rdV\Big)^{\frac{n+1}{n}}}{ \Big(\int \abs{\alpha}^{\frac{2n}{n-1}} \,\rdV\Big)^{\frac{n-1}{n}}} = S_2,
}
where the constraint is exactly the Euler--Lagrange equation for the minimization problem in the denominator. In particular, any critical point of $J_2$ satisfies
\eq{
    \rd^*\left(\abs{ \alpha - \rd\phi}^{\frac{2} {n-1}} (\alpha - \rd\phi)\right) = 0.
}

Frank--Loss~\cite{FL22} proved existence of minimizers for $n=3$ (and their method extends to all $n\ge 3$). Moreover, each positive Killing form $\xi\in E_1$ solves the Euler--Lagrange equation \eqref{eq_Y2'}; it is therefore natural to ask whether
\eq{
    J_2(\alpha) \geq  J_2(\xi), \quad \forall \alpha \not\equiv 0.
}
We show that this conjecture fails: for every $n\equiv 3\!\mod 4$, the infimum of $J_2$ is strictly smaller than $J_2(\xi)$.

\medskip
We first record the value of $J_2(\xi)$. Since $\abs{\xi}$ is constant, $\rd\phi=0$ is a critical point of $\rd\phi\mapsto \norm{\xi-\rd\phi}_{\frac{2n}{n-1}}$, and by convexity of the $L^{\frac{2n}{n-1}}$-norm we have
\eq{
    \inf_{\phi} \Big(\int \abs{\xi - \rd\phi}^{\frac{2n}{n-1}}\Big)^{\frac{n-1}{n}} = \Big(\int \abs{\xi}^{\frac{2n}{n-1}}\Big)^{\frac{n-1}{n}}.
}
Consequently,
\eq{
    J_2(\xi) = \frac{(n+1)^2}{4}\omega_n^{2/n}.
}

\begin{proposition}\label{prop5.1}
Given any $0\not\equiv\xi\in E_1$, there exists $\varphi_{-1}\in E_{-1}$ such that
\eq{
    J_2(\xi + \varphi_{-1}) < \frac{(n+1)^2}{4}\omega_n^{2/n}.
}
\end{proposition}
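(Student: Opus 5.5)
The plan is to test $J_2$ along the curve $\alpha_t=\xi+t\varphi_{-1}$, with $\varphi_{-1}=\eta\in E_{-1}$ to be chosen, and expand to second order in $t$. We normalize $\abs{\xi}=\abs{\eta}=1$; by Proposition~\ref{Killing_properties}(2) both have constant length. Set $h\coloneqq\<\xi,\eta\>$. By Proposition~\ref{new_eigenfunction}(2), $h\in P_2$, so $\int h=0$. The key structural fact is
\[
\curl\alpha_t=\tfrac{n+1}{2}(\xi-t\eta).
\]
Hence $\abs{\curl\alpha_t}^2=\tfrac{(n+1)^2}{4}(1-2th+t^2)$ and $\abs{\alpha_t}^2=1+2th+t^2$, and both sides of the quotient are explicit.

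\emph{Step 1: expand numerator and ungauged denominator.} Expanding $(1\mp 2th+t^2)^{r/2}$ with $r=\frac{2n}{n+1}$ and $r=\frac{2n}{n-1}$ gives
\[
\Big(\int\abs{\curl\alpha_t}^{\frac{2n}{n+1}}\Big)^{\frac{n+1}{n}}=\tfrac{(n+1)^2}{4}\omega_n^{\frac2n}\Big(1+\tfrac{t^2}{\omega_n}\Big[\omega_n-\tfrac{2}{n+1}\int h^2\Big]+o(t^2)\Big),
\]
\[
\Big(\int\abs{\alpha_t}^{\frac{2n}{n-1}}\Big)^{\frac{n-1}{n}}=\omega_n^{\frac{n-1}{n}}\Big(1+\tfrac{t^2}{\omega_n}\Big[\omega_n+\tfrac{2}{n-1}\int h^2\Big]+o(t^2)\Big).
\]
The first-order terms vanish because $\int h=0$. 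If no gauge optimization were present, the ratio would therefore be $J_2(\xi)\big(1-\frac{4n}{(n^2-1)}\frac{t^2}{\omega_n}\int h^2+o(t^2)\big)$.

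\emph{Step 2: gauge correction.} The infimum over $\phi$ only lowers the denominator, which raises $J_2$, so this effect must be quantified. Write $\psi=\rd\phi$ and $q=\frac{2n}{n-1}$, and expand $F(\psi)=\int\abs{\alpha_t-\psi}^q$.

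Since $\xi$ and $\eta$ are co-closed, $\int\<\xi,\psi\>=\int\<\eta,\psi\>=0$. The term linear in $\psi$ is therefore $-q(q-2)t\int h\<\xi,\psi\>+O(t^2\norm{\psi})$. The quadratic term is $\frac q2\int\big(\abs{\psi}^2+(q-2)\<\xi,\psi\>^2\big)\ge\frac q2\int\abs{\psi}^2$, and higher orders are negligible. The minimizer is therefore $\psi=O(t)$. The decrease of $F$ is at most
\[
\tfrac q2 (q-2)^2t^2\,\norm{\Pi(h\xi)}_{L^2}^2+o(t^2),
\]
where $\Pi$ denotes the $L^2$ projection onto exact forms.

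So the proposition reduces to showing, for a suitable choice of $\eta$, that
\[
\tfrac{q}{2}(q-2)^2\norm{\Pi(h\xi)}^2\cdot\tfrac{2}{q}\ <\ \tfrac{4n}{n^2-1}\int h^2 .
\]
The left side already incorporates the conversion from $F$ to $F^{2/q}$ relative to $\omega_n$.

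\emph{Step 3: compute $\Pi(h\xi)$ (main obstacle).} Using Proposition~\ref{Killing_properties}(4),
\[
\rd^*(h\xi)=\pm*(\rd h\wedge\xi)=\pm\nabla h\ip\xi .
\]
For $h\in P_2$ one can then decompose $h\xi$ in the spirit of Proposition~\ref{classification} and Appendix~\ref{secA}: a piece $\curl(h\xi)$-type lying in $E_3$, a piece in $E_{-1}\oplus E_{-2}$-type co-closed forms, and an exact piece of the form $h\xi+\nabla h\ip*\xi$, modulo co-closed corrections. The norm of the exact piece should be computable by the same integration-by-parts trick as in Proposition~\ref{crucial_estimate}(2), using $\int\abs{\nabla h}^2=2(n+1)\int h^2$. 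I expect it to come out as an explicit fraction $c_n\int h^2$.

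I would first try the crude estimate $\norm{\Pi(h\xi)}^2\le\norm{(\mathrm{id}-\text{proj}_{\ker\curl^\perp})(h\xi)}^2$ via the identity $\int\abs{\rd(h\xi)}^2=\int\abs{\curl(h\xi)}^2$. If that is not sharp enough, I would do the exact spectral decomposition. Finally, $\eta$ is chosen so that $\int h^2>0$, for instance equality-type choices in Proposition~\ref{crucial_estimate}(1) with the bound $\frac{2}{n+3}$, and the strict inequality for small $t\neq 0$ follows.
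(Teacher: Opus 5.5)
Your Steps 1--2 are set up correctly, but the proof is not finished. The inequality to which you reduce everything is never established: Step 3 only records the expectation that $\norm{\Pi(h\xi)}^2$ is some explicit multiple of $\int h^2$. Two slips along the way: the numerator prefactor is $\frac{(n+1)^2}{4}\omega_n^{\frac{n+1}{n}}$, not $\omega_n^{\frac2n}$, and $\rd^*(h\xi)=\pm*(\rd h\w\xi)$ mixes degrees, since in fact $\rd^*(h\xi)=-\nabla h\ip\xi$.

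No spectral decomposition is needed to close Step 3. $\Pi$ is an $L^2$ projection and $\abs{\xi}\equiv1$, so $\norm{\Pi(h\xi)}_{L^2}^2\le\norm{h\xi}_{L^2}^2=\int h^2$. With $q-2=\frac{2}{n-1}$, your target $\frac{4}{(n-1)^2}\norm{\Pi(h\xi)}^2<\frac{4n}{n^2-1}\int h^2$ then reduces to $n+1<n(n-1)$, which holds for $n\ge3$ once $h\not\equiv0$.

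Two further steps also need repair:
\begin{enumerate}
\item In Step 2 you need a \emph{lower} bound on an infimum over all exact $\psi$, so ``the minimizer is $O(t)$ and higher orders are negligible'' is not an argument. The pointwise Bernoulli inequality $\abs{a-b}^q\ge\abs{a}^q-q\abs{a}^{q-2}\<a,b\>+\frac q2\abs{a}^{q-2}\abs{b}^2$, valid for $q\ge2$, gives a bound for every $\psi$ by completing a square. Combined with $\int\<\alpha_t,\psi\>=0$, it reproduces your estimate up to $O(t^3)$, with $\norm{\Pi(h\xi)}^2$ replaced by $\int h^2$.
\item The existence of $\eta\in E_{-1}$ with $h\not\equiv0$ cannot come from Proposition~\ref{crucial_estimate}(1), which is only an upper bound. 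Use instead that an $L^2$-orthonormal basis $\{\eta_j\}$ of $E_{-1}$ is a pointwise orthogonal frame of constant length (Proposition~\ref{Killing_properties}(3)). Then $\xi=\omega_n\sum_j\<\xi,\eta_j\>\eta_j$, and $\xi\not\equiv0$ forces some $\<\xi,\eta_j\>\not\equiv0$.
\end{enumerate}

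With these repairs your perturbative route works, but the paper's proof is shorter and non-perturbative. It applies H\"older in opposite directions:
\begin{itemize}
\item in the numerator, $\big(\int\abs{\curl\alpha}^{\frac{2n}{n+1}}\big)^{\frac{n+1}{n}}\le\omega_n^{\frac1n}\int\abs{\curl\alpha}^2$;
\item in the denominator, $\inf_\phi\big(\int\abs{\alpha-\rd\phi}^{\frac{2n}{n-1}}\big)^{\frac{n-1}{n}}\ge\omega_n^{-\frac1n}\inf_\phi\int\abs{\alpha-\rd\phi}^2=\omega_n^{-\frac1n}\int\abs{\alpha}^2$. The last equality holds because $\alpha=\xi+\varphi_{-1}$ is co-closed, so the $L^2$ gauge problem is solved by $\phi=0$.
\end{itemize}
Since $\curl\alpha=\frac{n+1}{2}(\xi-\varphi_{-1})$ and $E_1\perp E_{-1}$, one has $\int\abs{\curl\alpha}^2=\frac{(n+1)^2}{4}\int\abs{\alpha}^2$. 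Hence $J_2(\alpha)\le\frac{(n+1)^2}{4}\omega_n^{2/n}$ for \emph{every} $\varphi_{-1}\in E_{-1}$, not only small multiples. The inequality is strict as soon as $\abs{\xi-\varphi_{-1}}$ is nonconstant, i.e.\ as soon as $\<\xi,\varphi_{-1}\>\in P_2$ is not identically zero.

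The gauge effect you analyse in Steps 2--3 is thus absorbed entirely by the $L^q\ge L^2$ comparison. Inserting only this denominator bound into your Step 1 already gives $J_2(\alpha_t)\le J_2(\xi)\big(1-\frac{2}{n+1}\frac{t^2}{\omega_n}\int h^2+O(t^3)\big)$. What your approach offers in exchange is a finer second-order description of the gauge correction, which the statement does not require.
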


\begin{proof}
Let $\alpha=\xi+\varphi_{-1}$ with $0\not\equiv\varphi_{-1}\in E_{-1}$. Since $E_1\perp E_{-1}$ in $L^2$, we have
\eq{
    \int \< \xi, \varphi_{-1} \> =0.
}
Moreover,
$\curl \alpha =\curl \xi +\curl \varphi_{-1}=\frac{n+1}{2} (\xi-\varphi_{-1})$.

For the numerator, H\"older's inequality gives
\eq{
    \Big( \int \abs{ \curl\alpha }^{\frac{2n}{n+1}} \Big)^{\frac{n+1}{n}}
    \leq \omega_{n}^{\frac{1}{n}} \int \abs{\curl\alpha}^2
    = \frac{(n+1)^2}{4}\omega_{n}^{\frac{1}{n}} \int \abs{\xi-\varphi_{-1}}^2
    = \frac{(n+1)^2}{4}\omega_{n}^{\frac{1}{n}}\Big(\int \abs{\xi}^2 + \int \abs{\varphi_{-1}}^2 \Big).
}

For the denominator, applying H\"older again yields
\eq{
    \inf_{\phi} \Big(\int \abs{\alpha - \rd\phi}^{\frac{2n}{n-1}}\Big)^{\frac{n-1}{n}} \geq \omega_{n}^{-\frac{1}{n}} \inf_{\phi} \int \abs{\alpha - \rd\phi}^2.
}
Since $\alpha$ is co-closed, the Hodge decomposition \eqref{sphere_Hodge} gives $\alpha=\rd^*\beta$ for some $\beta\in\Omega^{\frac{n+1}{2}}$. Therefore,
\eq{
    \inf_{\phi} \int \abs{\alpha - \rd\phi}^2
    = \inf_{\phi} \int \big( \abs{\rd^*\beta}^2 + \abs{\rd \phi}^2 \big)
    = \int \abs{\alpha}^2.
}
Consequently,
\eq{
    \inf_{\phi} \Big(\int \abs{\alpha - \rd\phi}^{\frac{2n}{n-1}}\Big)^{\frac{n-1}{n}}
    \geq \omega_{n}^{-\frac{1}{n}} \int \abs{\alpha}^2
    = \omega_{n}^{-\frac{1}{n}}\Big(\int \abs{\xi}^2 + \int \abs{\varphi_{-1}}^2 \Big).
}
Combining these estimates yields
\eq{\label{min}
    J_2(\alpha) \le \frac{(n+1)^2}{4} \omega_n^{\frac 2 n}.
}

We now characterize the equality case. Equality holds if and only if both $\abs{\xi+\varphi_{-1}}^2$ and $\abs{\xi-\varphi_{-1}}^2$ are constant, equivalently $\<\xi,\varphi_{-1}\>$ is constant. By Proposition~\ref{new_eigenfunction}, $\<\xi,\varphi_{-1}\>\in P_2$, so equality in \eqref{min} holds if and only if $\<\xi,\varphi_{-1}\>\equiv 0$.

It remains to show that we can choose $\varphi_{-1}\in E_{-1}$ such that $\<\xi,\varphi_{-1}\>\not\equiv 0$. Using Proposition~\ref{Killing_properties}(3) (as in the proof of Proposition~\ref{classification}), we may expand
\eq{
    \xi = \sum_{l,i}c_{l,i}\,f_l\,\eta_i,
}
where $\{\eta_i\}$ is an orthonormal basis of $E_{-1}$ and $\{f_l\}$ is an orthonormal basis of $P_2$.
If $\<\xi,\varphi_{-1}\>\equiv 0$ for all $\varphi_{-1}\in E_{-1}$, then in particular for each basis element $\eta_j$ we would have
\eq{
    0 = \<\xi,\eta_j\> = \sum_l c_{l,j} f_l,
}
forcing $c_{l,j}=0$ for all $l,j$, a contradiction.
Thus there exists $\varphi_{-1}\in E_{-1}$ with $\<\xi,\varphi_{-1}\>\not\equiv 0$, and for this choice the inequality in \eqref{min} is strict.
\end{proof}

The argument above is inspired by a similar idea in \cite{WZ25}. By conformal invariance, Proposition~\ref{prop5.1} implies Theorem~\ref{thm1.2}.

\begin{remark}
(1) In view of \cite{WZ25}, we expect the Morse index of $\xi$ to be $n+1$.

(2) When $n=3$, the instability for $J_2$ is closely related to the instability for the conformally invariant 1-form quotient studied in Section~6; in that case one can identify an explicit choice of $\varphi_{-1}$.
\end{remark}

\section{A family of generalized inequalities}\label{sec6}

In this section we discuss a broader family of Sobolev-type quotients introduced by Frank--Loss \cites{FL22, FL1}. Let $n\ge 3$ be odd and let $\alpha$ be a $k$-form. For $1<p<n$ define
\eq{\label{L_pq}
    J_{p,k}(\alpha) \coloneqq \frac {\big( \int \abs{\curl \alpha}^{p} \,\rdV\big)^{q/p}} {\int \abs{\alpha}^q \,\rdV},
    \qquad q=\frac{np}{n-p}.
}
By \eqref{conformal_change}, the quotient $J_{p,k}$ is conformally invariant if and only if $p=\frac{n}{k+1}$.
Two conformally invariant choices will be particularly relevant here:
\begin{itemize}
\item $k=1$ and $p=\frac n2$ (with $n$ odd);
\item $k=\frac{n-1}{2}$ and $p=\frac{2n}{n+1}$ (with $n\equiv 3\!\mod 4$), which recovers $J_2$ from Section~5.
\end{itemize}
We first treat the case $k=1$, $p=\frac n2$, and prove Theorem~\ref{thm1.3}.

\subsection{Proof of Theorem \ref{thm1.3}}
For $k=1$ and $p=\frac n2$ (with $n$ odd), the operator $*\rd$ is (formally) self-adjoint on $1$-forms, and one is led to the conformally invariant inequality on $\R^n$
\eq{\label{A1}
    \inf_{\substack{\alpha \in \Omega^1, \\ \rd^*(\abs{\alpha}^{n-2}\alpha)=0}}
    J_{\frac{n}{2},1}(\alpha)
    =
    \inf_{\substack{\alpha \in \Omega^1, \\ \rd^*(\abs{\alpha}^{n-2}\alpha)=0}}
    \frac {\big( \int \abs{\curl \alpha}^{\frac{n}2}  \,\rdV\big)^2}{\int \abs{\alpha}^n \,\rdV}
    = S_3.
}
As in the $J_2$-problem, the constraint arises from minimizing over exact perturbations in the denominator.
The corresponding Euler--Lagrange equation is
\eq{\label{EL_another}
    \curl\big(\abs{\curl\alpha}^{\frac{n}{2}-2}\curl\alpha\big) = \mu_3 \, \abs{\alpha}^{n-2}\alpha \quad\hbox{for some constant}\ \mu_3>0.
}
As for $J_2$, any solution of \eqref{EL_another} automatically satisfies $\rd^*(\abs{\alpha}^{n-2}\alpha)=0$.

A distinguished solution of \eqref{EL_another} is provided by the Loss--Yau construction  \cite{Loss_Yau_86} (and its higher-dimensional generalization due to Dunne--Min \cite{Dunne_Min_08}). Concretely, on $\R^n$ define
\eq{\label{construction_xi}
    \xi = A^\flat = \left[n\left( \frac 1 {1+|x|^2} \right)^2\big((1-|x|^2) e_1 +2 x_1 x +2\Sigma x\big)\right]^\flat,
}
where
\eq{
    \Sigma \coloneqq \left ( \begin{matrix}
    0 &  &  &  &  &  \\
    & 0 & -1 &  &  &  \\
    & 1 & 0  &  &  &  \\
    &  &  & \ddots &  &  \\
    &  &  &  & 0 & -1 \\
    &  &  &  & 1 & 0
    \end{matrix}\right).
}
For $n=3$, the vector field $A$ was discovered in \cite{Loss_Yau_86} as the first example of a zero mode; Dunne--Min \cite{Dunne_Min_08} extended this construction to higher odd dimensions. Such solutions play an important role in the study of zero modes; see \cites{Loss_Yau_86, Dunne_Min_08, Frank_Loss_2024}.

Frank--Loss conjectured that the infimum $S_3$ in \eqref{A1} is achieved by $\xi$. For $n=3$ this is false by the example in Section~5. We show that the conjecture is also false for every odd $n$.

To exploit conformal invariance, we denote the form in \eqref{construction_xi} by $\xi_{\R^n}$ and let $\xi_{\S^n}$ be its stereographic pullback to $\S^n$ with respect to
\eq{
    g_{\S^n} = \left(\frac{2}{1+\abs{x}^2}\right)^2 g_{\mathbb{R}^n}.
}
We also introduce the reflected ``twin'' 1-form
\eq{
    \bar{\xi}_{\R^n} = \left[n\left( \frac 1 {1+|x|^2} \right)^2\big((1-|x|^2) e_1 +2 x_1 x -2\Sigma x\big)\right]^\flat,
}
and denote its stereographic pullback by $\bar\xi_{\S^n}$. When there is no ambiguity we omit the subscript $g_{\S^n}$ in norms and inner products.

\begin{lemma}\label{lem6.1}
On $\S^n$ we have
\begin{enumerate}
    \item $\abs{\xi} = \abs{\bar{\xi}} \equiv \frac{n}{2}$;
    \item $\abs{\curl\xi} = \abs{\curl\bar{\xi}} \equiv n\big(\frac{n-1}{2}\big)^{1/2}$;
    \item $\xi$ and $\bar{\xi}$ are solutions to \eqref{EL_another}.
\end{enumerate}
\end{lemma}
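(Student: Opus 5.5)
The plan is to work on $\R^n$ with the explicit formula \eqref{construction_xi} and then transfer to $\S^n$ using the conformal laws \eqref{conformal_change}. For $1$-forms the metric $g_{\S^n}=u^2 g_{\R^n}$ with $u=\frac{2}{1+\abs{x}^2}$ gives $\abs{\alpha}_{\S^n}=u^{-1}\abs{\alpha}_{\R^n}$. For the $2$-form $\rd\alpha$, and hence for $\curl\alpha=*\rd\alpha$, the norm scales as $\abs{\rd\alpha}_{\S^n}=u^{-2}\abs{\rd\alpha}_{\R^n}$. So (1) and (2) reduce to the pointwise identities
\[
\abs{A}_{\R^n}=\tfrac{n}{2}\,u=\frac{n}{1+\abs{x}^2},\qquad \abs{\rd A^\flat}_{\R^n}=n\Big(\tfrac{n-1}{2}\Big)^{1/2}u^2 .
\]
The twin $\bar\xi$ is the image of $\xi$ under the orientation-reversing isometry that reflects one coordinate in each rotation block of $\Sigma$ (this replaces $\Sigma$ by $-\Sigma$). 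The isometry preserves pointwise norms and intertwines $\curl$ up to a sign, so it suffices to treat $\xi$.

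For (1), I expand $\abs{(1-\abs{x}^2)e_1+2x_1x+2\Sigma x}^2$. Three facts do the work: $\<x,\Sigma x\>=0$, $\<e_1,\Sigma x\>=0$ (the first row of $\Sigma$ vanishes), and $\abs{\Sigma x}^2=\abs{x}^2-x_1^2$. The sum then becomes
\[
(1-\abs{x}^2)^2+4x_1^2(1-\abs{x}^2)+4x_1^2\abs{x}^2+4(\abs{x}^2-x_1^2)=(1+\abs{x}^2)^2 .
\]
Hence $\abs{A}=n/(1+\abs{x}^2)$, which is the claim.

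For (2), I recognize $A$ as a conformal pullback of a linear object. Write
\[
\frac{(1-\abs{x}^2)e_1+2x_1x}{(1+\abs{x}^2)^2}=\frac{1}{2}u\,\nabla^{\S^n}x_1 ,
\]
i.e. the stereographic image of the gradient of the first coordinate function, plus the rotational part $\Sigma x\,u^2/4$. Here $\Sigma$ is a rotation fixing $e_1$ and $e_0$, and it comes from a Killing field of $\S^n$. Rather than rely on this structure, I expect to compute $\rd A^\flat$ directly: write $A^\flat=n\,u^2/4\,\big(\cdots\big)$ and use $\rd(\Sigma x)^\flat=2\sum_{j}\rd x_{2j}\w\rd x_{2j+1}$ together with $\rd(x_1x)^\flat=\rd x_1\w x^\flat$. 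The $2$-form $\rd A^\flat$ is then a combination of $\omega_\Sigma=\sum\rd x_{2j}\w\rd x_{2j+1}$, $\rd x_1\w x^\flat$, $x^\flat\w(\Sigma x)^\flat$ and $\rd x_1\w(\Sigma x)^\flat$, with rational coefficients in $\abs{x}^2$. Its norm is computed from the Gram relations of these forms, and the target is $\abs{\rd A^\flat}^2=\frac{n^2(n-1)}{2}\,u^4$.

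I expect (2) to be the main obstacle, because it is pure but messy algebra. A cleaner route is to evaluate at $x=0$ and argue homogeneity. The family is invariant under the subgroup of conformal transformations preserving $A$: rotations commuting with $\Sigma$ and fixing $e_1$, together with the "Hopf-type" flows generated by $\xi$ itself. This makes $\abs{\curl\xi}$ constant on $\S^n$, so it is enough to evaluate at one point. At $x=0$ we get $\rd A^\flat=n(2\omega_\Sigma)$. The form $\omega_\Sigma$ has $\frac{n-1}{2}$ orthonormal blocks, so $\abs{2n\,\omega_\Sigma}^2=4n^2\frac{n-1}{2}$. With $u(0)=2$ this gives $\abs{\curl\xi}_{\S^n}=n\big(\frac{n-1}{2}\big)^{1/2}$, consistent with the target. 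If the homogeneity argument is not convincing, I fall back on the direct expansion above.

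For (3), I use the constancy from (1) and (2). Since $\abs{\curl\xi}$ and $\abs{\xi}$ are constant on $\S^n$, equation \eqref{EL_another} reduces to the linear relation $\curl\curl\xi=\mu\,\xi$, i.e. $\rd^*\rd\xi=\mu\xi$ up to sign. I would check that $\xi$ is co-closed and lies in the first co-closed eigenspace of the Hodge Laplacian on $1$-forms, with $\Delta\xi=(n-1)\xi$ via \eqref{sphere_Weitzenbock} and the Killing-type structure. Alternatively, I would verify on $\R^n$ the known Loss--Yau/Dunne--Min identity that $A$ solves the zero-mode equation, which encodes exactly this. The sign $\mu_3>0$ then follows by pairing \eqref{EL_another} with $\xi$ and integrating.
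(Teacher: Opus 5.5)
Your part (1) is correct and is the paper's computation. The reflection $R$ flipping one coordinate in each block of $\Sigma$ is a fine way to pass from $\xi$ to $\bar\xi$: whether $R$ reverses orientation depends on $n$ mod $4$, but $\det R$ enters \eqref{EL_another} twice and cancels. Your fallback for (2), the direct expansion of $\rd A^\flat$, is also what the paper does.

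The genuine gap is in (3). Reducing \eqref{EL_another} via the constant norms to $\curl\curl\xi=\mu\xi$ is fine; here $\curl\curl=\rd^*\rd$ on $1$-forms in odd dimension. But all the content then lies in showing that $\xi$ is co-closed with $\rd^*\rd\xi=\mu\xi$. You only say you ``would check'' this from ``the Killing-type structure'', which you never establish. The structure you actually describe points the wrong way. The field $(1-\abs{x}^2)e_1+2x_1x$ is not a gradient direction: the $g_{\S^n}$-gradient of $X_1=\frac{2x_1}{1+\abs{x}^2}$ is $\frac12\big((1+\abs{x}^2)e_1-2x_1x\big)$, the other combination of translation and special conformal field. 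With a conformal, non-isometric component there would be no reason for $\xi$ to be co-closed, let alone a $\curl^2$-eigenform. The eigenvalue you quote is also wrong. Pairing with $\xi$ and using (1)--(2) forces $\mu=\abs{\curl\xi}^2/\abs{\xi}^2=2(n-1)$, whereas $n-1$ is the eigenvalue of $\nabla^*\nabla$. Appealing to the Loss--Yau zero-mode equation does not close the gap either: that is a statement about a Pauli operator, not an identity for $\curl\curl A$.

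The missing idea is that $W\coloneqq(1-\abs{x}^2)e_1+2x_1x$ is a \emph{Killing} field of $g_{\S^n}$, namely the stereographic image of the rotation in the $(e_0,e_1)$-plane. Indeed $\mathcal{L}_W g_{\R^n}=4x_1 g_{\R^n}$ and $W(\log u)=-2x_1$ for $u=\frac{2}{1+\abs{x}^2}$, so $\mathcal{L}_W(u^2g_{\R^n})=0$. Together with $\Sigma x$ this gives $\xi=\frac n2(BX)^\flat$ (dual taken in $g_{\S^n}$), where $B$ is the complex structure on $\R^{n+1}$ acting as a rotation by $\frac{\pi}{2}$ in the $(e_0,e_1)$-plane and as $\Sigma$ on $e_2,\dots,e_n$. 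Everything then follows without computation:
\begin{enumerate}
\item[(1)] This is just $\abs{BX}=1$.
\item[(2)] We have $\abs{\rd\xi}=n\,\abs{\omega_B|_{T_X\S^n}}$, where $\omega_B(Y,Z)=\<BY,Z\>$, and $\abs{\omega_B|_{X^\perp}}^2=\frac{n+1}{2}-\abs{BX}^2=\frac{n-1}{2}$.
\item[(3)] A Killing $1$-form is co-closed with $\nabla^*\nabla\xi=(n-1)\xi$. So \eqref{sphere_Weitzenbock} gives $\curl\curl\xi=\Delta\xi=2(n-1)\xi$. This is \eqref{EL_another} with $\mu_3=2(n-1)\abs{\curl\xi}^{\frac n2-2}\abs{\xi}^{2-n}$, the constant of Lemma~\ref{lem6.3}.
\end{enumerate}
This runs the paper's logic in reverse. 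The paper computes $\rd\xi_{\R^n}$, $\abs{\rd\xi_{\R^n}}$ and $\curl(\abs{\curl\xi}^{\frac n2-2}\curl\xi)$ by brute force on $\R^n$, and only afterwards deduces in Lemma~\ref{lem6.3} that $\xi$ is Killing.

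Your homogeneity shortcut for (2) needs the same identification. The group you list (Euclidean rotations commuting with $\Sigma$ and fixing $e_1$, plus the flow of $\xi$) is $U(1)\times U(\frac{n-1}{2})$ in complex coordinates. It preserves $\abs{z_1}$ and is not transitive on $\S^n$; the full stabilizer $U(\frac{n+1}{2})$ of $BX$ is.

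Finally, there are two arithmetic slips at the origin. There $\rd A^\flat=4n\,\omega_\Sigma$, not $2n\,\omega_\Sigma$, because the coefficient of $\Sigma x$ in $A$ is $2$. With your own scaling factor $u(0)^{-2}=\frac14$, the correct value gives the target. Your value $2n\,\omega_\Sigma$ reproduces the target only if one divides by $u(0)$ instead of $u(0)^2$, so the two slips cancel.
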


\begin{proof}
We sketch the proof; the details follow from direct (though somewhat lengthy) computations.
Since the expressions simplify on $\S^n$ while derivatives are easier to compute on $\R^n$, we start from $\xi_{\R^n}$ and then use conformal covariance to transfer the identities to $\S^n$.
First, we have
\eq{\label{lem6.1_eq0}
    \abs{\xi_{\mathbb{R}^n}}_{g_{\mathbb{R}^n}}^2 = \frac{n^2}{(1+\abs{x}^2)^2}.
}   
The derivative of $\xi_{\mathbb{R}^n}$ is
\eq{
    \nabla_{e_i}\xi_{\mathbb{R}^n} = -\frac{4nx_i}{(1+\abs{x}^2)^3}\Big((1-\abs{x}^2)e_1 + 2x_1x + 2\Sigma x\Big)^\flat + \frac{n}{(1+\abs{x}^2)^2}\Big( -2x_ie_1 + 2x_1e_i + 2\delta_{1i}x + 2\Sigma e_i\Big)^\flat.
}
Using Proposition \ref{basic_form} (4) we have
\eq{
    \rd\xi_{\mathbb{R}^n} = -\frac{4n}{(1+\abs{x}^2)^3}\Big( 2x^\flat\w e_1^\flat + 2x^\flat\w (\Sigma x)^\flat - (1+\abs{x}^2)(e_2^\flat\w e_3^\flat + \cdots e_{n-1}^\flat \w e_n^\flat) \Big).
}
Hence
\eq{\label{lem6.1_eq01}
    \abs{*_{g_{\mathbb{R}^n}}\rd\xi_{\mathbb{R}^n}}_{g_{\mathbb{R}^n}}^2 = \abs{\rd\xi_{\mathbb{R}^n}}_{g_{\mathbb{R}^n}}^2 = \frac{8n^2(n-1)}{(1+\abs{x}^2)^4}.
}
It follows (for short we omit the subscript $g_{\mathbb{R}^n}$ in this step)
\eq{\label{lem6.1_eq1}
    &\curl(\abs{\curl\xi}^{\frac{n}{2}-2}\curl\xi) = *\rd(\abs{*\rd\xi}^{\frac{n}{2}-2}*\!\rd\xi)\\
    =& *\!\rd\left\{\frac{c}{(1+\abs{x}^2)^{n-1}}\Big( 2*(x^\flat\w e_1^\flat) + 2*(x^\flat\w (\Sigma x)^\flat) - (1+\abs{x}^2)*(e_2^\flat\w e_3^\flat + \cdots e_{n-1}^\flat \w e_n^\flat) \Big)\right\}\\
    =&-\frac{2(n-1)c}{(1+\abs{x}^2)^n} *\left\{ x^\flat\w \Big( 2*(x^\flat\w e_1^\flat) + 2*(x^\flat\w (\Sigma x)^\flat) - (1+\abs{x}^2)*(e_2^\flat\w e_3^\flat + \cdots e_{n-1}^\flat \w e_n^\flat) \Big) \right\}\\
    &+\frac{c}{(1+\abs{x}^2)^{n-1}}\left\{ 2\rd^*(x^\flat\w e_1^\flat) + 2\rd^*(x^\flat\w(\Sigma x)^\flat) - \rd^*\big((1+\abs{x}^2)(e_2^\flat\w e_3^\flat + \cdots e_{n-1}^\flat \w e_n^\flat)\big) \right\}.
}
Every term can be computed using Proposition \ref{basic_form}. For example
\eq{
    *\big( x^\flat\w *(x^\flat\w e_1^\flat) \big) = -x\ip x^\flat\w e_1^\flat = -\abs{x}^2e_1^\flat + x^\flat\w x\ip e_1^\flat = -\abs{x}^2e_1^\flat + x_1x^\flat,
}
and
\eq{
    \rd^*(x^\flat\w e_1^\flat) = -e_i\ip\nabla_{e_i}(x^\flat\w e_1^\flat) = -e_i\ip e_i^\flat\w e_1^\flat = -(n-1)e_1^\flat.
}
Similarly we have
\eq{
    *\big( x^\flat\w *(x^\flat\w(\Sigma x)^\flat) \big) = -\abs{x}^2\Sigma x, \quad *\big( x^\flat\w *(e_2^\flat\w e_3^\flat) \big) = -x_2e_3^\flat + x_3e_2^\flat,
}
and
\eq{
    \rd^*(x^\flat\w (\Sigma x)^\flat) = -n(\Sigma x)^\flat, \quad \rd^*\big( (1+\abs{x}^2) e_2^\flat\w e_3^\flat \big) = -2x_2e_3^\flat + 2x_3e_2^\flat.
}
Hence
\eq{
    \rd^*\big((1+\abs{x}^2)(e_2^\flat\w e_3^\flat + \cdots e_{n-1}^\flat \w e_n^\flat)\big) = -2 (\Sigma x)^\flat.
}
Together with \eqref{lem6.1_eq1} it follows
\eq{
    \curl(\abs{\curl\xi}^{\frac{n}{2}-2}\curl\xi) = -\frac{2(n-1)c}{(1+\abs{x}^2)^n} \Big( (1-\abs{x}^2)e_1 + 2x_1x + 2\Sigma x \Big)^\flat.
}
Now we revert the subscript $g_{\mathbb{R}^n}$. Recall \eqref{lem6.1_eq0}, and we have
\eq{
    \curl_{g_{\mathbb{R}^n}}(\abs{\curl_{g_{\mathbb{R}^n}}\xi_{\mathbb{R}^n}}_{g_{\mathbb{R}^n}}^{\frac{n}{2}-2}\curl_{g_{\mathbb{R}^n}}\xi_{\mathbb{R}^n}) = \mu_3 \abs{\xi_{\mathbb{R}^n}}_{g_{\mathbb{R}^n}}^{n-2}\xi_{\mathbb{R}^n}.
}
Hence $\xi_{\mathbb{R}^n}$ solves \eqref{EL_another}. Finally, by conformal invariance of \eqref{EL_another} and using \eqref{conformal_change}, we complete the proof for $\xi$.

Note that by definition of $\Sigma$ we have
\eq{
    \<\Sigma x, e_1\> = \<\Sigma x, x\> = 0.
}
Therefore, after replacing $\Sigma$ by $-\Sigma$, one can see that the same conclusion holds for $\bar{\xi}$.
\end{proof}

We recall the spectrum of the Hodge Laplacian on $\S^n$, which was first given in \cite{IK79}.

\begin{lemma}[cf. \cite{IK79}]
On $\S^n$, the spectrum of the Hodge Laplacian operator acting on $k$-forms consists of two series
\eq{
    \lambda_l^{(1)} = (k+l)(n-k+l+1), \quad \lambda_l^{(2)} = (k+l+1)(n-k+l), \quad l=0,1,2,\cdots,
}
where $\lambda_l^{(1)}$ corresponds to closed eigenforms and $\lambda_l^{(2)}$ corresponds to co-closed eigenforms.
\end{lemma}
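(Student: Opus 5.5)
We would use the Hodge decomposition to reduce the statement to determining the co-closed spectrum in each degree, and then compute that spectrum representation-theoretically on $\S^n=SO(n+1)/SO(n)$.

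\textbf{Step 1: reduction to co-closed eigenforms.} On $\S^n$ with $0<k<n$, \eqref{sphere_Hodge} (applied in degree $k$) gives $\Omega^k=\rd\Omega^{k-1}\oplus\rd^*\Omega^{k+1}$. Since $\Delta$ commutes with $\rd$ and $\rd^*$, every eigenform splits into a closed and a co-closed eigenform with the same eigenvalue. For $\lambda\neq0$ the map $\rd$ sends co-closed $\lambda$-eigenforms of degree $k-1$ bijectively onto closed $\lambda$-eigenforms of degree $k$; the inverse is $\lambda^{-1}\rd^*$. Hence the closed series in degree $k$ equals the co-closed series in degree $k-1$. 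This is consistent with the formulas, since $\lambda^{(2)}_l$ for $k-1$ is $(k+l)(n-k+l+1)=\lambda^{(1)}_l$ for $k$. The degree $0$ case is the classical function spectrum $l(n+l-1)$, with the constants removed. So it suffices to show that the co-closed spectrum on $k$-forms is $(k+l+1)(n-k+l)$, $l\ge0$.

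\textbf{Step 2: homogeneous-bundle description.} Let $G=SO(n+1)$ and $H=SO(n)$, and write $\Lambda^k\S^n=G\times_H\Lambda^k\R^n$. Since $\S^n$ is a symmetric space with the normal metric, the rough Laplacian acts on sections as $\nabla^*\nabla={\rm Cas}_G-{\rm Cas}_H(\Lambda^k\R^n)$, with the normalization in which ${\rm Cas}_H(\Lambda^k\R^n)=k(n-k)$. Combined with \eqref{sphere_Weitzenbock}, $\Delta=\nabla^*\nabla+k(n-k)$ acts on each $G$-isotypic component exactly as ${\rm Cas}_G$.

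By Frobenius reciprocity and the branching rule $SO(n+1)\downarrow SO(n)$, the $G$-types occurring in $L^2(\Lambda^k)$ are those whose restriction contains $\Lambda^k\R^n$. These are the highest weights $(l+1,1^{k-1},0,\dots)$ and $(l+1,1^{k},0,\dots)$ with $l\ge0$, with care about the sign of the last weight in the middle degree when $n+1$ is even.

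\textbf{Step 3: Casimir evaluation.} Using ${\rm Cas}(\lambda)=\langle\lambda,\lambda+2\rho\rangle$ with $\rho_i=\frac{n+1}{2}-i$, one computes ${\rm Cas}(l+1,1^{k-1})=(k+l)(n-k+l+1)$ and ${\rm Cas}(l+1,1^{k})=(k+l+1)(n-k+l)$. The first family must be the image under $\rd$ of the second family in degree $k-1$, since it has the same highest weight. It is therefore the closed part, and the second family is the co-closed part, which matches the two series in the statement.

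\textbf{Alternative and expected obstacle.} Instead of representation theory, one could restrict to $\S^n$ the forms $\sum_I P_I\,\rd x^I$ on $\R^{n+1}$ with $P_I$ harmonic homogeneous of degree $l$ and $x\ip\omega=0=\rd^*\omega$, and compute $\Delta$ directly, in the spirit of Proposition~\ref{new_eigenfunction}. The main obstacle is the branching and identification step: checking that exactly these $G$-types occur, with the right multiplicity, and that the assignment to closed versus co-closed is correct. We would settle this by testing $k=0$, where the eigenvalue $(l+1)(n+l)$ is the degree-$(l+1)$ spherical harmonic eigenvalue. We would also compare with Proposition~\ref{eigenvalue_and_multi} at $k=\frac{n-1}{2}$, where $\lambda_l^{(2)}=\big(\frac{n+1}{2}+l\big)^2$ matches the squared curl eigenvalues.
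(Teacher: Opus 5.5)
Your argument is correct, and it takes a different route from the paper. The paper gives no proof of this lemma: it quotes it from \cite{IK79}, and in the middle degree combines it with \cite{B19} to obtain Proposition~\ref{eigenvalue_and_multi}. You instead supply a self-contained derivation, which is the standard representation-theoretic computation.

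\emph{Why the steps hold.} On $\S^n=SO(n+1)/SO(n)$ the Levi-Civita connection is the canonical connection, so $\nabla^*\nabla=\mathrm{Cas}_G-\mathrm{Cas}_H$. With the normalization $-\tfrac12\mathrm{tr}$, which gives the round metric, one has $\mathrm{Cas}_H(\Lambda^k\R^n)=k(n-k)$. The Weitzenb\"ock formula \eqref{sphere_Weitzenbock} then identifies $\Delta$ with $\mathrm{Cas}_G$. Your Casimir values $(k+l)(n-k+l+1)$ and $(k+l+1)(n-k+l)$ are right, and your Step~1 bijection $\rd$, with inverse $\lambda^{-1}\rd^*$, is right.

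\emph{What your route buys over the citation.} The Weyl dimension formula gives the multiplicities, and each eigenspace is identified as an $SO(n+1)$-module. In the degree $k=\frac{n-1}{2}$ with $n$ odd, the co-closed family has highest weights $(l+1,1^{k-1},\pm1)$. This splitting is exactly $E_{l+1}\oplus E_{-(l+1)}$, so you also recover the multiplicities in Proposition~\ref{eigenvalue_and_multi}; for instance, for $n=3$ the weights $(l+1,\pm1)$ have dimension $(l+1)(l+3)$. The citation only buys brevity.

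A few points should be tightened; none of them is a real gap.

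\textbf{Range of $k$.} The lemma is only correct for $1\le k\le n-1$. For $k=0$ the closed forms are the constants, so $\lambda^{(1)}_l=l(n+l+1)$ with $l\ge1$ is not an eigenvalue; dually for $k=n$. You implicitly restrict to this range, which is the right reading.

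\textbf{Degrees $k>\frac n2$.} Your list of $G$-types uses the highest weight $(1^k)$ of $\Lambda^k\R^n$. This is valid only for $k<\frac n2$, including $k=\frac{n-1}{2}$ with the sign split you mention. For $k>\frac n2$ one must use $\Lambda^k\R^n\cong\Lambda^{n-k}\R^n$, and then the roles of the two families swap. The cleanest fix is Hodge duality: $*$ commutes with $\Delta$, interchanges closed and co-closed forms, and $\lambda^{(2)}_l$ in degree $n-k$ equals $\lambda^{(1)}_l$ in degree $k$. For $n$ even and $k=\frac n2$, the type $(l+1,1^{k-1})$ occurs twice and highest weights cannot separate closed from co-closed. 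There, however, the two series coincide, and this case does not occur for the odd $n$ considered here.

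\textbf{The identification step is not an obstacle.} Interlacing gives every type with multiplicity one. The closed/co-closed assignment then follows by induction on $k$, starting from functions:
\begin{itemize}
\item[(i)] By Step~1, $\rd$ maps the co-closed part of $\Omega^{k-1}$, which inductively consists of the types $(l+1,1^{k-1})$, injectively and $G$-equivariantly into $\Omega^k$.
\item[(ii)] Each image is a nonzero submodule of the irreducible $(l+1,1^{k-1})$-isotypic component, hence all of it.
\item[(iii)] So the closed part of $\Omega^k$ is exactly the first family. Since there are no harmonic $k$-forms for $0<k<n$, the invariant orthogonal Hodge decomposition puts the types $(l+1,1^k)$ in the co-closed part.
\end{itemize}
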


We prove that $\xi$ is in fact the first co-closed eigenform of the Hodge Laplacian.

\begin{lemma}\label{lem6.3}
$\xi$ is a Killing 1-form and $\Delta\xi=2(n-1)\xi$. The same holds for $\bar{\xi}$.
\end{lemma}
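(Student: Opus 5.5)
Our plan is to prove the Killing equation on $\S^n$ directly, namely $\nabla_X\xi=\frac12 X\ip\rd\xi$ together with $\rd^*\xi=0$, and then read off the eigenvalue.

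\textbf{Step 1: co-closedness.} Conformal changes alter $\rd^*$ by a lower-order term, so it is cleanest to work directly on $\S^n$. Lemma~\ref{lem6.1}(3) shows that $\xi$ solves \eqref{EL_another}. Applying $\rd^*$ to \eqref{EL_another} kills the left-hand side, since $\rd^*\curl=\pm *\rd\rd=0$. This gives $\rd^*(\abs{\xi}^{n-2}\xi)=0$. Because $\abs{\xi}\equiv\frac n2$ on $\S^n$ by Lemma~\ref{lem6.1}(1), we conclude $\rd^*\xi=0$.

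\textbf{Step 2: the Killing equation.} A co-closed 1-form $\xi$ is Killing exactly when its dual vector field $A$ is a Killing field, i.e.\ $\mathcal L_A g=0$, equivalently $\nabla\xi$ is antisymmetric. We check this on $\S^n$ after stereographic projection. In the embedding picture the pulled-back field should be the restriction to $\S^n\subset\R^{n+1}$ of a linear vector field $x\mapsto Bx$ with $B\in\mathfrak{so}(n+1)$. The expected form of $B$ is: a rotation in the $(e_0,e_1)$-plane, where $e_0$ is the axis of projection, combined with $\Sigma$ acting on the remaining coordinates.

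Concretely, we first verify that $(1-\abs{x}^2)e_1+2x_1x$ is (up to the factor 2) the stereographic image of the rotation generator $x_0\partial_1-x_1\partial_0$. We then verify that $2\Sigma x$ is the image of the rotation generated by $\Sigma$; since that rotation fixes the projection axis, it is also a rotation of $\R^n$.

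Next we track the scalar factors. The 1-form on $\R^n$ is $\xi_{\R^n}=\frac{n}{(1+\abs{x}^2)^2}V^\flat$ with $V=(1-\abs{x}^2)e_1+2x_1x+2\Sigma x$. Raising the index with $g_{\S^n}=\frac{4}{(1+\abs{x}^2)^2}g_{\R^n}$ gives the vector field $\frac n4 V$. Thus $A$ is a constant multiple of a Killing field of $g_{\S^n}$, which yields antisymmetry of $\nabla\xi$.

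For a Killing 1-form, $\nabla_X\xi=\frac12 X\ip\rd\xi$ holds automatically: antisymmetry of $\nabla\xi$ means $\rd\xi=2\nabla\xi$ as a 2-form. Hence $\xi$ is a Killing 1-form in the sense of Definition~\ref{def_Killing}.

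\textbf{Step 3: the eigenvalue.} For a Killing vector field on an Einstein manifold, $\nabla^*\nabla\xi=\mathrm{Ric}(\xi)=(n-1)\xi$. By \eqref{sphere_Weitzenbock} with $p=1$,
\[
\Delta\xi=\nabla^*\nabla\xi+(n-1)\xi=2(n-1)\xi .
\]
This is $\lambda^{(2)}_0$ for $k=1$, the first co-closed eigenvalue.

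\textbf{Step 4: the twin $\bar\xi$.} Replacing $\Sigma$ by $-\Sigma$ keeps $B$ in $\mathfrak{so}(n+1)$, so Steps 1--3 apply verbatim to $\bar\xi$.

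\textbf{Main obstacle.} The delicate step is Step 2, identifying $\xi_{\S^n}$ with the restriction of a linear skew field and keeping the conformal factors and sign conventions straight. As a fallback, we can verify antisymmetry of $\nabla\xi$ directly on $\R^n$. Under $g=u^2 g_{\R^n}$, the Killing equation for $A$ becomes the conformal Killing equation for $A$ on $\R^n$, together with $A(\log u)=-\frac1n\,\mathrm{div}_{\R^n}A$. Both are checked from the explicit derivative formula $\nabla_{e_i}\xi_{\R^n}$ already computed in the proof of Lemma~\ref{lem6.1}.
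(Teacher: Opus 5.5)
Your proof is correct, but it runs in the opposite direction from the paper's.

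\emph{The paper's route.} It never identifies $\xi$ geometrically. It transfers \eqref{EL_another} to $\S^n$ by conformal invariance. It then inserts the constant lengths $\abs{\xi}\equiv\frac n2$ and $\abs{\curl\xi}\equiv n(\frac{n-1}{2})^{1/2}$ from Lemma~\ref{lem6.1}, together with the explicit value of $\mu_3$, to obtain the linear equation $\curl^2\xi=2(n-1)\xi$. From this it reads off co-closedness and $\Delta\xi=\curl^2\xi=2(n-1)\xi$. Only then does it obtain the Killing property, from Semmelmann's spectral characterization of Killing $1$-forms on $\S^n$.

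\emph{Your route.} You prove the Killing property first. You recognize $V$ as $\rd\pi\big(2(y_0\partial_{y_1}-y_1\partial_{y_0})+2\Sigma y\big)$ under $\pi(y,y_0)=y/(1+y_0)$. This checks out: the first piece gives $\frac12\big((1-\abs{x}^2)e_1+2x_1x\big)$ up to the factor $2$, the second gives $\Sigma x$, and the $g_{\S^n}$-dual of $\xi$ is indeed $\frac n4 V$. You then get the eigenvalue from the Bochner identity $\nabla^*\nabla\xi=\mathrm{Ric}(\xi)$ for Killing fields.

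\emph{What each buys.} The paper's argument is shorter once Lemma~\ref{lem6.1} is available. However, it leans on that lemma's lengthy and only sketched computation, on the exact constant $\mu_3$, and on an external citation. Yours is self-contained and does not need Lemma~\ref{lem6.1} at all. Your Step~1 is in fact superfluous: an antisymmetric $\nabla\xi$ is trace-free, so $\rd^*\xi=-\rtr\nabla\xi=0$ automatically.

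Your identification also explains Lemma~\ref{lem6.1}(1). Here $\frac12 B$ is an orthogonal complex structure on $\R^{n+1}$, so $\abs{By}=2$ on $\S^n$. Hence $\xi$ and $\bar\xi$ are Hopf-type Killing fields of constant length $\frac n4\cdot 2=\frac n2$.
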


\begin{proof}
By conformal invariance, \eqref{EL_another} also holds on $\S^n$ for $\xi$. Here $\mu_3 = 2^{\frac{3n}{4}}n^{-\frac{n}{2}}(n-1)^{\frac{n}{4}}$. Together with Lemma~\ref{lem6.1} we obtain
\eq{
    \curl^2\xi = 2(n-1)\xi.
}
In particular, $\xi$ is co-closed, and hence
\eq{
    \Delta\xi = \curl^2\xi = 2(n-1)\xi.
}
For any co-closed $1$-form $\alpha$ on $\S^n$, \cite[Corollary~1.1.4]{Semmelmann03} shows that $\alpha$ is a Killing $1$-form if and only if $\Delta\alpha = 2(n-1)\alpha$. Therefore $\xi$ is a Killing $1$-form. The argument for $\bar\xi$ is identical.
\end{proof}

\begin{remark}
The $1$-forms $\xi$ and $\bar\xi$ defined above form a special subclass of Killing $1$-forms, namely those of constant length. When $n=3$, they span the space of Killing $1$-forms. When $n>3$, there are many other Killing $1$-forms: the space of Killing $1$-forms (equivalently, Killing vector fields) has dimension $\frac{n(n+1)}{2}$, whereas the span of $\xi$ and $\bar\xi$ has dimension only $2n$.
\end{remark}

\begin{lemma}\label{lem6.4}
For any Killing 1-form $\xi$ and any $X\in\Gamma(T\S^n)$ we have $\nabla_X\rd\xi = -2X^\flat\wedge\xi$.
\end{lemma}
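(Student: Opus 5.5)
We need $\nabla_X \rd\xi = -2X^\flat\wedge\xi$ for a Killing 1-form $\xi$ on $\S^n$. The plan is the classical Kostant-type identity for Killing fields, $\nabla^2 K = R(\cdot,K)$, specialized to constant curvature $1$ and written in form language. By Definition~\ref{def_Killing} with $p=1$ we have $\nabla_Y\xi=\frac12 Y\ip\rd\xi$ for all $Y$. Equivalently, $\nabla\xi$ is skew and $\rd\xi = 2\nabla\xi$ viewed as a 2-form, since $\rd\xi=\sum_i e_i^\flat\w\nabla_{e_i}\xi$ and the symmetric part of $\nabla\xi$ vanishes.

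\textbf{Step 1: the skew derivative identity.} Set $B(Y,Z)=(\nabla\xi)(Y,Z)=\<\nabla_Y\xi,Z\>$, which is skew. I differentiate once more and use skew-symmetry together with the Ricci identity
$$(\nabla^2_{X,Y}\xi)(Z)-(\nabla^2_{Y,X}\xi)(Z)=-\xi(R(X,Y)Z).$$
The standard cyclic-permutation trick then runs as follows. Write the Ricci identity for the three cyclic permutations of $(X,Y,Z)$ and add/subtract them. The quantity $\nabla^2_{X,Y}\xi(Z)$ is skew in $(Y,Z)$, and the cyclic sum $\nabla^2_{X,Y}\xi(Z)+\nabla^2_{Y,Z}\xi(X)+\nabla^2_{Z,X}\xi(Y)=0$, because $\rd\xi=2B$ is closed: $\rd\rd\xi=0$ is exactly this cyclic vanishing. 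Combining these with the first Bianchi identity yields
$$\nabla^2_{X,Y}\xi(Z)=-\xi(R(Y,Z)X)=\<R(Y,Z)X,\xi^\sharp\>$$
up to the sign convention for $R$ fixed in the paper.

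\textbf{Step 2: insert the sphere curvature.} With the paper's convention $R(X,Y)=\nabla_X\nabla_Y-\nabla_Y\nabla_X-\nabla_{[X,Y]}$, on $\S^n$ we have $R(Y,Z)X=\<Z,X\>Y-\<Y,X\>Z$. This gives
$$(\nabla_X\nabla\xi)(Y,Z)=\pm\big(\<X,Z\>\xi(Y)-\<X,Y\>\xi(Z)\big).$$
The right-hand side is, up to sign, $(X^\flat\w\xi)(Z,Y)$. Multiplying by $2$ (since $\rd\xi=2\nabla\xi$) gives $\nabla_X\rd\xi=\pm2X^\flat\w\xi$.

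\textbf{Step 3: fix the sign.} The main obstacle is sign bookkeeping: the curvature convention and the identification of a skew 2-tensor with a 2-form both affect it. Rather than tracking these abstractly, I would fix the sign by a consistency check. Apply $\rd^*$ to both sides, or use the Weitzenb\"ock formula \eqref{sphere_Weitzenbock}. First, $\rd^*\rd\xi=\Delta\xi=2(n-1)\xi$, which holds by Lemma~\ref{lem6.3} (the argument there works for any Killing 1-form, via \cite{Semmelmann03}). Second, by Proposition~\ref{basic_form}(4), the candidate formula gives
$$\rd^*\rd\xi=-\sum_i e_i\ip\nabla_{e_i}\rd\xi=\pm2\sum_i e_i\ip(e_i^\flat\w\xi)=\pm2(n-1)\xi$$
by the counting formula (6). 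The plus sign in this trace corresponds to the minus sign in $\nabla_X\rd\xi=-2X^\flat\w\xi$, so the consistent choice is $\nabla_X\rd\xi=-2X^\flat\w\xi$.

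This check is valid because Step 2 shows $\nabla_X\rd\xi=c\,X^\flat\w\xi$ with $c=\pm2$ fixed by the geometry, and the trace identity then determines the sign of $c$ unambiguously.
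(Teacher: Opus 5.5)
Your argument is correct, but it takes a different route from the paper. You prove Kostant's identity $\nabla^2_{X,Y}\xi(Z)=\xi(R(Y,Z)X)$ for Killing fields by the tensor permutation trick. This uses skewness of $\nabla\xi$, the Ricci identity, and the cyclic identity coming from $\rd\rd\xi=0$. You then insert the sphere curvature and fix the overall sign afterwards by tracing against $\rd^*\rd\xi=2(n-1)\xi$.

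The paper stays in form language. At a point with $\nabla e_i=0$ it computes the commutator $\nabla_{e_i}\rd\xi-\rd\nabla_{e_i}\xi=e_j^\flat\wedge R(e_i,e_j)\xi=-e_i^\flat\wedge\xi$. It then uses the Killing equation, the Clifford relation and $\rd^2\xi=0$ to show $\rd\nabla_{e_i}\xi=\tfrac12\rd(e_i\ip\rd\xi)=\tfrac12\nabla_{e_i}\rd\xi$. Subtracting gives $\tfrac12\nabla_{e_i}\rd\xi=-e_i^\flat\wedge\xi$, with the sign built in.

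Both proofs rest on the same three facts. In the paper, the wedge contraction does the permutation bookkeeping and produces the sign automatically. Your version gives the more general identity, valid on any Riemannian manifold, but needs an extra input to pin the sign: Semmelmann's $\Delta\xi=2(n-1)\xi$. That input can also be obtained directly. The Killing equation gives $\nabla^*\nabla\xi=\tfrac12\rd^*\rd\xi$, and the Weitzenb\"ock formula $\Delta=\nabla^*\nabla+(n-1)$ then forces $\rd^*\rd\xi=2(n-1)\xi$.

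In Step 1 there is a sign slip. The two expressions you equate, $-\xi(R(Y,Z)X)$ and $\langle R(Y,Z)X,\xi^\sharp\rangle$, are negatives of each other. With the paper's convention the second is the correct one.

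You actually need neither Bianchi nor the a posteriori sign check. Set $S(X,Y,Z)=\nabla^2_{X,Y}\xi(Z)$.
\begin{itemize}
\item Skewness in the last two slots turns the Ricci identity into $S(X,Y,Z)+S(Y,Z,X)=-\xi(R(X,Y)Z)$.
\item The cyclic identity then gives $S(Z,X,Y)=\xi(R(X,Y)Z)$, that is, $S(X,Y,Z)=\xi(R(Y,Z)X)$.
\item On $\S^n$ this equals $\langle X,Z\rangle\xi(Y)-\langle X,Y\rangle\xi(Z)=-(X^\flat\wedge\xi)(Y,Z)$, so $c=-2$ directly.
\end{itemize}
Your trace argument then becomes a consistency check. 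It is a valid one, because $c$ is a universal constant and nonzero Killing $1$-forms exist.
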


\begin{proof}
It suffices to prove it locally. Given any point $x_0\in\S^n$, let $\{e_i\}_{i=1}^n$ be a local orthonormal basis at $x_0$ with $\nabla e_i(x_0)=0$. Then we have
\eq{\label{eq_lemm6.4}
    \nabla_{e_i}\rd\xi - \rd\nabla_{e_i}\xi &= \nabla_{e_i}(e_j^\flat\w\nabla_{e_j}\xi) - e_j^\flat\w\nabla_{e_j}\nabla_{e_i}\xi = e_j^\flat\w (\nabla_{e_i}\nabla_{e_j}\xi - \nabla_{e_j}\nabla_{e_i}\xi)\\
    &= e_j^\flat\w R(e_i,e_j)\xi = e_j^\flat\w (\xi_je_i^\flat - \xi_i e_j^\flat) = - e_i^\flat\w\xi,
}
where we used the Riemann curvature $R(X,Y)Z=\<Y,Z\>X-\<X,Z\>Y$ on $\S^n$.
Since $\xi$ is a Killing 1-form, by definition we have
\eq{
    \nabla_{e_i}\xi = \frac{1}{2} e_i\ip\rd\xi.
}
It follows
\eq{
    \rd\nabla_{e_i}\xi &= \frac{1}{2}\rd(e_i\ip\rd\xi) = \frac{1}{2}e_j^\flat\w\nabla_{e_j}(e_i\ip\rd\xi)\\
    &= \frac{1}{2}e_j^\flat\w e_i\ip \nabla_{e_j}\rd\xi = \frac{1}{2}\nabla_{e_i}\rd\xi - \frac{1}{2}e_i\ip e_j^\flat\w\nabla_{e_j}\rd\xi\\
    &= \frac{1}{2}\nabla_{e_i}\rd\xi - \frac{1}{2}e_i\ip \rd^2\xi = \frac{1}{2}\nabla_{e_i}\rd\xi,
}
where we have used Proposition~\ref{basic_form}(7) in the fifth equality. Together with \eqref{eq_lemm6.4} we complete the proof.
\end{proof}

\begin{lemma}\label{lem6.5}
For any two Killing 1-forms $\xi,\eta$ on $\S^n$, we have
\begin{enumerate}
    \item $\<\curl\xi, \curl\eta\> + 4\<\xi,\eta\> \equiv{\rm const}$;
    \item $\<\curl\xi, \curl\eta\> = 2\<\nabla\xi,\nabla\eta\>$.
\end{enumerate}
\end{lemma}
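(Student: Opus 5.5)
The plan is to use the Killing equation $\nabla_X\xi=\frac12 X\ip\rd\xi$ (Definition~\ref{def_Killing} with $p=1$), together with Lemma~\ref{lem6.4} and the duality identity \eqref{direct_dual}. On $1$-forms $\curl=*\rd$ with $*$ an isometry, so $\<\curl\xi,\curl\eta\>=\<\rd\xi,\rd\eta\>$. Both items therefore reduce to statements about the $2$-forms $\rd\xi,\rd\eta$.

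For (2), I would write, in a local orthonormal frame,
\[
\<\nabla\xi,\nabla\eta\>=\sum_i\<\nabla_{e_i}\xi,\nabla_{e_i}\eta\>=\frac14\sum_i\<e_i\ip\rd\xi,e_i\ip\rd\eta\>.
\]
By \eqref{direct_dual} and the counting formula Proposition~\ref{basic_form}(6) with $p=2$, we have $\sum_i\<e_i\ip\omega,e_i\ip\theta\>=\<\omega,\sum_i e_i^\flat\w e_i\ip\theta\>=2\<\omega,\theta\>$ for $2$-forms $\omega,\theta$. This gives $\<\nabla\xi,\nabla\eta\>=\frac12\<\rd\xi,\rd\eta\>=\frac12\<\curl\xi,\curl\eta\>$, which is (2).

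For (1), I will show that the function $F=\<\rd\xi,\rd\eta\>+4\<\xi,\eta\>$ has vanishing derivative in every direction $X$. By Lemma~\ref{lem6.4},
\[
X\<\rd\xi,\rd\eta\>=-2\<X^\flat\w\xi,\rd\eta\>-2\<\rd\xi,X^\flat\w\eta\>.
\]
By the Killing equation,
\[
X\,4\<\xi,\eta\>=2\<X\ip\rd\xi,\eta\>+2\<\xi,X\ip\rd\eta\>.
\]
Applying \eqref{direct_dual} gives $\<X\ip\rd\xi,\eta\>=\<\rd\xi,X^\flat\w\eta\>$ and $\<\xi,X\ip\rd\eta\>=\<X^\flat\w\xi,\rd\eta\>$, so the two expressions cancel exactly. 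Since $\S^n$ is connected, $F$ is constant, which is (1).

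There is no real obstacle here. The only point needing care is normalization: the form inner product on $2$-forms must be the one for which $\sum_i e_i^\flat\w e_i\ip\theta=2\theta$, consistent with Proposition~\ref{basic_form}(6). I would check this convention, since it fixes the constants $4$ in (1) and $2$ in (2), and it matches Lemma~\ref{lem6.4} with its factor $-2$.
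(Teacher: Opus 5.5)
Your proof is correct and follows the paper's argument essentially line by line: for (1) you differentiate with Lemma~\ref{lem6.4} and the Killing equation and cancel the terms via \eqref{direct_dual}, and for (2) you combine the Killing equation, \eqref{direct_dual} and the counting formula. Your citation of Proposition~\ref{basic_form}(6) with $p=2$ for the final step of (2) is the correct reference; the paper cites item (7) there.
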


\begin{proof}
(1) Using Lemma~\ref{lem6.4} we have for any vector field $X$
\eq{
    \nabla_X \<\curl\xi, \curl\eta\> &= \nabla_X \<\rd\xi, \rd\eta\> = \<\nabla_X\rd\xi,\rd\eta\> + \<\rd\xi, \nabla_X\rd\eta\> = -2\<X^\flat\w\xi,\rd\eta\> -2\<\rd\xi,X^\flat\w\eta\>.
}
By definition of Killing 1-forms we have
\eq{
    \nabla_X\<\xi,\eta\> = \<\nabla_X\xi,\eta\> + \<\xi,\nabla_X\eta\> = \frac{1}{2}\<X\ip\rd\xi,\eta\> + \frac{1}{2}\<\xi,X\ip\rd\eta\> = \frac{1}{2}\<\rd\xi,X^\flat\w\eta\> + \frac{1}{2}\<X^\flat\w\xi,\rd\eta\>,
}
where we used Proposition~\ref{basic_form}(2) in the last equality. Hence
\eq{
    \nabla_X\big(\<\curl\xi,\curl\eta\> + 4\<\xi,\eta\>\big) = 0, \quad \forall X\in\Gamma(T\S^n)
}
and the conclusion follows.
    
(2) By definition of Killing 1-forms we have
\eq{
    \<\nabla\xi,\nabla\eta\> = \<\nabla_{e_i}\xi,\nabla_{e_i}\eta\> = \frac{1}{4}\<e_i\ip\rd\xi,e_i\ip\rd\eta\> = \frac{1}{4}\<\rd\xi, e_i^\flat\w e_i\ip\rd\eta\> = \frac{1}{2}\<\rd\xi,\rd\eta\>,
}
where we used \eqref{direct_dual} in the third equality and Proposition~\ref{basic_form}(7) in the last. Hence we complete the proof.
\end{proof}

Using a similar method one can extend Lemma~\ref{lem6.4} and Lemma~\ref{lem6.5} to Killing $k$-forms. We state the result, though not necessarily for our purpose, and omit the proof.

\begin{lemma}
For any two Killing $k$-forms $\xi,\eta$ on $\S^n$, we have
\begin{enumerate}
    \item $\nabla_X\rd\xi = -(k+1)X^\flat\w\xi$ for any $X\in\Gamma(T\S^n)$;
    \item $\<\curl\xi,\curl\eta\> + (k+1)^2\<\xi,\eta\> \equiv{\rm const}$;
    \item $\<\curl\xi,\curl\eta\> = (k+1)\<\nabla\xi,\nabla\eta\>$. 
\end{enumerate}
\end{lemma}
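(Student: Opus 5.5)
We follow the proofs of Lemma~\ref{lem6.4} and Lemma~\ref{lem6.5}, with the degree $1$ replaced by $k$ throughout. Let $\xi$ be a Killing $k$-form, so $\rd^*\xi=0$ and $\nabla_X\xi=\frac1{k+1}X\ip\rd\xi$.

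\emph{Step 1 (the commutator term).} Work in a frame with $\nabla e_i(x_0)=0$. Then
\begin{align}
\nabla_{e_i}\rd\xi-\rd\nabla_{e_i}\xi=\sum_j e_j^\flat\w R(e_i,e_j)\xi .
\end{align}
On $\S^n$ we have $R(X,Y)=Y^\flat\w X\ip-X^\flat\w Y\ip$ acting on forms. Substituting and applying the counting formulas of Proposition~\ref{basic_form}(6) in degree $k$, we get
\begin{align}
\sum_j e_j^\flat\w\big(e_j^\flat\w e_i\ip\xi-e_i^\flat\w e_j\ip\xi\big)=0+e_i^\flat\w\sum_j e_j^\flat\w e_j\ip\xi=k\,e_i^\flat\w\xi
\end{align}
(up to a sign fixed by the curvature convention). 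Checking the case $k=1$ against \eqref{eq_lemm6.4}, which gives $-e_i^\flat\w\xi$, fixes the sign: the commutator equals $-k\,e_i^\flat\w\xi$.

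\emph{Step 2 (the derivative term).} By the Killing equation and the parallel frame,
\begin{align}
\rd\nabla_{e_i}\xi=\tfrac1{k+1}\rd(e_i\ip\rd\xi)=\tfrac1{k+1}\sum_j e_j^\flat\w e_i\ip\nabla_{e_j}\rd\xi .
\end{align}
Using Proposition~\ref{basic_form}(7) and $\rd^2=0$, this equals $\frac1{k+1}\nabla_{e_i}\rd\xi$. Writing $A:=\nabla_{e_i}\rd\xi$, Step 1 becomes $A-\frac1{k+1}A=-k\,e_i^\flat\w\xi$. Hence $A=-(k+1)e_i^\flat\w\xi$, which proves (1).

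\emph{Step 3 (proof of (2)).} Differentiate both terms. By (1),
\begin{align}
\nabla_X\<\rd\xi,\rd\eta\>=-(k+1)\big(\<X^\flat\w\xi,\rd\eta\>+\<\rd\xi,X^\flat\w\eta\>\big).
\end{align}
By the Killing equation and \eqref{direct_dual},
\begin{align}
\nabla_X\<\xi,\eta\>=\tfrac1{k+1}\big(\<\rd\xi,X^\flat\w\eta\>+\<X^\flat\w\xi,\rd\eta\>\big).
\end{align}
So the combination $\<\rd\xi,\rd\eta\>+(k+1)^2\<\xi,\eta\>$ is parallel, hence constant. Finally $|\curl\cdot|=|\rd\cdot|$, interpreting $\curl=*\rd$ for $k$-forms, since $*$ is an isometry; this gives (2).

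\emph{Step 4 (proof of (3)).} Compute
\begin{align}
\<\nabla\xi,\nabla\eta\>=\tfrac1{(k+1)^2}\sum_i\<e_i\ip\rd\xi,e_i\ip\rd\eta\>=\tfrac1{(k+1)^2}\<\rd\xi,\textstyle\sum_i e_i^\flat\w e_i\ip\rd\eta\>=\tfrac1{k+1}\<\rd\xi,\rd\eta\>,
\end{align}
where the last step uses the counting formula on $(k+1)$-forms.

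The main obstacle is the sign and normalization of the curvature term in Step 1 for general $k$. The plan is to verify it against the $k=1$ computation \eqref{eq_lemm6.4} and against Proposition~\ref{Killing_properties}(1) in the middle degree $k=\frac{n-1}{2}$. In that degree $\rd\xi=\frac{n+1}{2}*\xi$ and $\nabla_X*\xi=*(X\ip*\xi)=-X^\flat\w\xi$ by Proposition~\ref{basic_form}(2). This gives $\nabla_X\rd\xi=-\frac{n+1}{2}X^\flat\w\xi=-(k+1)X^\flat\w\xi$, which is consistent with (1).
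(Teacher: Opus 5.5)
Your proposal is correct and is exactly the degree-$k$ version of the proofs of Lemmas~\ref{lem6.4} and~\ref{lem6.5}, which is the ``similar method'' the paper refers to when it omits this proof. The sign you calibrate in Step~1 can be read off directly: with $R(X,Y)=\nabla_X\nabla_Y-\nabla_Y\nabla_X-\nabla_{[X,Y]}$, the curvature of $\S^n$ acts on forms as the derivation $X^\flat\w Y\ip-Y^\flat\w X\ip$ (the sign used in \eqref{eq_lemm6.4}, and the one that gives the Weitzenb\"ock constant $k(n-k)$), which yields $-k\,e_i^\flat\w\xi$ at once; note also that solving for $A$ in Step~2 requires $k\ge1$.
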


\begin{remark}\label{rmk6.7}
The construction \eqref{construction_xi} of $\xi_{\mathbb{R}^n}$ does not rely on the choice of the unit vector $e_1$. In fact, one can replace $e_1$ by any other unit constant vector in $\mathbb{R}^n$ and obtain the same conclusion, just keeping in mind that $\Sigma$ should be replaced as well. For example, if we replace $e_1$ by $-e_2$ and change the sign of corresponding $\Sigma$, then the corresponding Killing 1-form is
\eq{
    \eta = \left[n\left( \frac 1 {1+|x|^2} \right)^2( -(1-|x|^2) e_2 -2 x_2 x +2\Sigma' x)\right]^\flat,
}
where

\eq{
    \Sigma' \coloneqq \left ( \begin{matrix}
    0 & &  & & \cdots &  1 \\
    & 0 &  &  &  &  &  \\
    & & 0 & -1 &  &  &  \\
    & & 1 & 0  &  &  &  \\
    \vdots & & & & \ddots &  &  \\
    -1 & & & &  &   0
    \end{matrix}\right).
}
\end{remark}

Let $\eta$ be defined as in Remark~\ref{rmk6.7}. Using the symmetry of $\S^n$, it is elementary to check that
\eq{\label{before_lemm6.6}
\int \<\xi,\eta\> = 0 \quad \hbox{and} \quad \<\xi,\eta\>\not\equiv0.
}

The following lemma can also be proved by direct computation on $\mathbb{R}^n$, but we give a short proof instead.
\begin{lemma}\label{lem6.6}
By our choice of $\eta$, we have $\int \<\curl\xi,\curl\eta\> = 0$.
\end{lemma}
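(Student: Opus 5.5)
The short route is to combine Lemma~\ref{lem6.5}(1) with \eqref{before_lemm6.6}, using in addition that both sides of Lemma~\ref{lem6.5}(1) integrate to something we can compute. By Lemma~\ref{lem6.5}(1),
\[
\<\curl\xi,\curl\eta\> + 4\<\xi,\eta\> \equiv C
\]
for some constant $C$. Integrating over $\S^n$ and using $\int\<\xi,\eta\>=0$ from \eqref{before_lemm6.6} gives
\[
\int\<\curl\xi,\curl\eta\> = C\,\omega_n .
\]
It therefore suffices to show $C=0$.

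For this I would integrate by parts instead. Since $\curl$ is self-adjoint on $1$-forms, $\curl^2=*\rd*\rd=\rd^*\rd$ up to the sign fixed by $n$ odd, so $\curl^2\eta=\Delta\eta=2(n-1)\eta$, because $\eta$ is co-closed (Lemma~\ref{lem6.3}, applied to $\eta$ via Remark~\ref{rmk6.7}). Hence
\[
\int\<\curl\xi,\curl\eta\>=\int\<\rd\xi,\rd\eta\>=\int\<\xi,\rd^*\rd\eta\>=2(n-1)\int\<\xi,\eta\>=0.
\]
This already proves the lemma, and it does not need Lemma~\ref{lem6.5}(1).

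The main point to check is the sign convention. We need $|\curl\alpha|=|\rd\alpha|$ and $\int\<\rd\xi,\rd\eta\>=\int\<\xi,\rd^*\rd\eta\>$; both are standard, since $*$ is an isometry and $\rd^*$ is the $L^2$-adjoint of $\rd$. We also need $\eta$ to be a co-closed eigenform with eigenvalue $2(n-1)$. This follows because Lemma~\ref{lem6.1} and Lemma~\ref{lem6.3} apply verbatim to $\eta$, by the remark that $e_1$ and $\Sigma$ may be replaced accordingly.

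As a byproduct, Lemma~\ref{lem6.5}(1) then forces $\<\curl\xi,\curl\eta\>=-4\<\xi,\eta\>$ pointwise. This identity is what will be useful later.
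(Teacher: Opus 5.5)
Your argument is correct, but it takes a shorter route than the paper's. The paper works with covariant derivatives. From the Weitzenb\"ock formula it gets $\nabla^*\nabla\alpha=(n-1)\alpha$ for every first co-closed eigenform $\alpha$. It then polarizes $\int|\nabla\alpha|^2=(n-1)\int|\alpha|^2$ over $\alpha=\xi+\eta,\ \xi,\ \eta$ to obtain $\int\<\nabla\xi,\nabla\eta\>=0$. Finally it converts this into the statement via the pointwise Killing identity $\<\curl\xi,\curl\eta\>=2\<\nabla\xi,\nabla\eta\>$ of Lemma~\ref{lem6.5}(2).

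You instead integrate by parts with $\rd$ directly: $\int\<\rd\xi,\rd\eta\>=\int\<\xi,\rd^*\rd\eta\>=\int\<\xi,\Delta\eta\>$. This uses only that $\eta$ is co-closed with $\Delta\eta=2(n-1)\eta$, which comes from Lemma~\ref{lem6.3} via Remark~\ref{rmk6.7}; the paper's proof needs that input as well. Your route bypasses both the Weitzenb\"ock formula and Lemma~\ref{lem6.5}(2), and it uses the eigenform property of only one of the two forms. In fact it gives $\int\<\curl\xi,\curl\eta\>=2(n-1)\int\<\xi,\eta\>$ for an arbitrary $1$-form $\xi$. The paper's route, by contrast, reuses Killing-form identities it develops anyway in Lemma~\ref{lem6.5} for Corollary~\ref{coro6.7}.

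One remark about your wording. For $n>3$, $\curl=*\rd$ maps $\Omega^1$ to $\Omega^{n-2}$, so ``$\curl$ is self-adjoint on $1$-forms'' only makes literal sense when $n=3$. Your displayed computation never uses it, though. It uses only that $*$ is an isometry, that $\rd^*$ is the $L^2$-adjoint of $\rd$, and that $\rd^*\eta=0$, so the proof stands as written.

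Your closing observation is also right: integrating Lemma~\ref{lem6.5}(1) forces the constant to vanish, giving the pointwise identity $\<\curl\xi,\curl\eta\>=-4\<\xi,\eta\>$. That is exactly Corollary~\ref{coro6.7}.
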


\begin{proof}
For any first eigenform $\alpha\in\Omega^1$ of the Hodge Laplacian operator on $\S^n$, by the Weitzenb\"ock formula \eqref{sphere_Weitzenbock} we have
\eq{
    2(n-1)\alpha = \Delta\alpha = \nabla^*\nabla\alpha + (n-1)\alpha.
}
Therefore $\nabla^*\nabla\alpha = (n-1)\alpha$ and hence
\eq{
    \int \abs{\nabla\alpha}^2 = (n-1) \int \abs{\alpha}^2.
}
Choosing $\alpha = \xi+\eta$, $\alpha=\xi$, and $\alpha=\eta$, respectively, we have
\eq{
    \int \abs{\nabla\xi}^2 + 2\int\<\nabla\xi,\nabla\eta\> + \int\abs{\nabla\eta}^2 = (n-1)\int\abs{\xi}^2 + 2(n-1)\int\<\xi,\eta\> + (n-1)\int\abs{\eta}^2
}
and
\eq{
    \int\abs{\nabla\xi}^2 = (n-1)\int\abs{\xi}^2, \quad \int\abs{\nabla\eta}^2 = (n-1)\int\abs{\eta}^2.
}
In view of \eqref{before_lemm6.6}, we have
\eq{
    \int \<\nabla\xi,\nabla\eta\> = 0.
}
Together with Lemma~\ref{lem6.5}(3), the conclusion follows.
\end{proof}

\begin{corollary}\label{coro6.7}
By our choice of $\eta$, we have $\<\curl\xi,\curl\eta\> + 4\<\xi,\eta\> \equiv 0$.
\end{corollary}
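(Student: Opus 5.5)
By Lemma~\ref{lem6.5}(1), applied to the two Killing $1$-forms $\xi$ and $\eta$, the function
\[
F\coloneqq \<\curl\xi,\curl\eta\> + 4\<\xi,\eta\>
\]
is constant on $\S^n$, say $F\equiv C$. It therefore suffices to show that $C=0$. For this we integrate the identity over $\S^n$, which gives
\[
C\,\omega_n = \int \<\curl\xi,\curl\eta\> + 4\int\<\xi,\eta\>.
\]

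Both integrals on the right-hand side have already been computed. The second vanishes by \eqref{before_lemm6.6}. The first vanishes by Lemma~\ref{lem6.6}, which was itself derived from \eqref{before_lemm6.6} together with the Weitzenb\"ock identity $\int\abs{\nabla\alpha}^2=(n-1)\int\abs{\alpha}^2$ for first co-closed eigenforms and Lemma~\ref{lem6.5}(2). Hence $C\omega_n=0$, so $C=0$, which is the claim.

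The only point that needs care is that Lemma~\ref{lem6.5} applies, i.e.\ that $\eta$ is indeed a Killing $1$-form. By Remark~\ref{rmk6.7}, $\eta$ arises from the same Loss--Yau/Dunne--Min construction with $e_1$ replaced by $-e_2$ and $\Sigma$ replaced by the correspondingly adapted matrix $-\Sigma'$. The proofs of Lemma~\ref{lem6.1} and Lemma~\ref{lem6.3} go through verbatim after an orthogonal change of coordinates, so $\eta$ solves \eqref{EL_another}, has constant length, satisfies $\curl^2\eta=2(n-1)\eta$, and is therefore Killing by \cite[Corollary~1.1.4]{Semmelmann03}. The main (mild) obstacle is thus not the corollary itself but the input \eqref{before_lemm6.6}, i.e.\ that $\int\<\xi,\eta\>=0$. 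If one wanted to double-check it, I would pull back to $\R^n$ and observe that $\<\xi_{\R^n},\eta_{\R^n}\>$ multiplied by the conformal weight is odd under a suitable reflection, for instance $x\mapsto$ a reflection exchanging the roles of $x_1$ and $x_2$ up to sign. This forces the integral to vanish.
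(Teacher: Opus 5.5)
Your proof is correct and is the paper's argument. Lemma~\ref{lem6.5}(1) makes $\<\curl\xi,\curl\eta\>+4\<\xi,\eta\>$ constant (the paper cites part (2), but part (1) is evidently what is meant), and integrating over $\S^n$ with \eqref{before_lemm6.6} and Lemma~\ref{lem6.6} forces that constant to vanish.

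Your optional double-check of \eqref{before_lemm6.6} is not needed, and the reflection you propose would not work as stated. For $n=3$, a direct computation in stereographic coordinates gives $\<\xi,\eta\>=9\,\frac{(1-|x|^2)x_3-2x_1x_2}{(1+|x|^2)^2}$. Its $x_3$-term is invariant under any signed swap of $x_1,x_2$, so no such swap makes the function odd. Each term does integrate to zero separately, by its own reflection: $x_3\mapsto -x_3$ for the first and $x_1\mapsto -x_1$ for the second.
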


\begin{proof}
It follows from \eqref{before_lemm6.6}, Lemma~\ref{lem6.5}(2), and Lemma~\ref{lem6.6}.
\end{proof}

Now we prove that $\xi$ is not a minimizer of $J_{\frac{n}{2},1}$. It suffices to prove that the second variation of $J_{\frac{n}{2},1}$ at $\xi$ in fact admits negative directions.

\begin{proposition}\label{prop6.8}
The second variation of $J_{\frac{n}{2},1}$ at $\xi$ admits negative directions.
\end{proposition}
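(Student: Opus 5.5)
The plan is to exhibit an explicit negative direction for the second variation at $\xi$. I take the direction to be the Killing $1$-form $\eta$ of Remark~\ref{rmk6.7}. By \eqref{before_lemm6.6}, Lemma~\ref{lem6.6} and Corollary~\ref{coro6.7}, $\eta$ satisfies
\[
\int\langle\xi,\eta\rangle=0,\qquad \int\langle\curl\xi,\curl\eta\rangle=0,\qquad \langle\curl\xi,\curl\eta\rangle=-4\langle\xi,\eta\rangle \text{ pointwise}.
\]
By Lemma~\ref{lem6.1} (via the same computation applied to $\eta$, which is just $\xi$ with $e_1$ replaced by $-e_2$), it also satisfies
\[
|\eta|=|\xi|\equiv b:=\tfrac n2,\qquad |\curl\eta|=|\curl\xi|\equiv a,\qquad a^2=\tfrac{n^2(n-1)}{2}=2(n-1)b^2 .
\]

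Write $J_{\frac n2,1}=U/V$ with $U=N^2$, $N=\int|\curl\alpha|^{n/2}$ and $V=\int|\alpha|^n$. I will use the elementary formula
\[
\frac{\rd^2}{\rd t^2}\Big|_{t=0}\int|A+tB|^p=p\int|A|^{p-2}\Big(|B|^2+(p-2)\frac{\langle A,B\rangle^2}{|A|^2}\Big).
\]
The first variations vanish in the direction $\eta$: $N'=\frac n2a^{\frac n2-2}\int\langle\curl\xi,\curl\eta\rangle=0$ and $V'=nb^{n-2}\int\langle\xi,\eta\rangle=0$. Hence $U''=2NN''$, and the sign of $J''$ is the sign of $U''V-UV''$.

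Let $\omega=\omega_n$ and $I:=\int\langle\xi,\eta\rangle^2$. Then $N=a^{n/2}\omega$, $U=a^n\omega^2$ and $V=b^n\omega$. Using the pointwise identity $\langle\curl\xi,\curl\eta\rangle^2=16\langle\xi,\eta\rangle^2$, we get
\[
U''=n a^{n-2}\omega\Big(a^2\omega+16\big(\tfrac n2-2\big)I\Big),\qquad V''=nb^{n-2}\big(b^2\omega+(n-2)I\big).
\]
Dividing $U''V-UV''$ by $n a^{n-2}b^{n-2}\omega^2$ leaves
\[
b^2\big(a^2\omega+(8n-32)I\big)-a^2\big(b^2\omega+(n-2)I\big)=b^2I\big(8n-32-2(n-1)(n-2)\big)=-2b^2I\,(n^2-7n+18).
\]
The quadratic $n^2-7n+18$ has negative discriminant, so it is positive for all $n$. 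Since $\langle\xi,\eta\rangle\not\equiv0$ by \eqref{before_lemm6.6}, we have $I>0$, and therefore $\frac{\rd^2}{\rd t^2}J_{\frac n2,1}(\xi+t\eta)|_{t=0}<0$.

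I expect the main obstacle to be the bookkeeping rather than any new idea. One point to check is that $|\eta|$ and $|\curl\eta|$ really equal the same constants as for $\xi$; I would confirm this by rotating the computation in Lemma~\ref{lem6.1}. The other is the gauge constraint in \eqref{A1}. If the statement is read for the constrained quotient with $\inf_\phi$ in the denominator, replacing $\int|\alpha|^n$ by the infimum over $\alpha-\rd\phi$ only decreases the denominator and so could raise $J$. I would handle this by noting two things. First, $\rd\phi=0$ is critical for the denominator at $\xi$, since $|\xi|$ is constant and $\xi$ is co-closed. Second, optimizing over exact perturbations of $\xi+t\eta$ changes $V$ only at order $t^2$ by a term involving $\rd^*(|\xi+t\eta|^{n-2}(\xi+t\eta))$. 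Its first-order part is $(n-2)b^{n-4}\rd^*(\langle\xi,\eta\rangle\xi)$ (using $\rd^*\eta=0$), which one can correct by a $t$-order exact form at a cost of order $t^2$. I would then try to show that this correction is dominated by, or has the same sign as, the negative term above. If that proves messy, I would first settle for the unconstrained $J_{\frac n2,1}$ exactly as defined in \eqref{L_pq}, where the computation above is already complete.
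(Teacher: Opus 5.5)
Your argument is the paper's. You test in the direction $\eta$ of Remark~\ref{rmk6.7} and use Corollary~\ref{coro6.7} to turn $\int\langle\curl\xi,\curl\eta\rangle^2$ into $16I$. The $\int\abs{\curl\eta}^2$ and $\int\abs{\eta}^2$ terms then cancel because $\abs{\eta}\equiv b$ and $\abs{\curl\eta}\equiv a$, and you read off a negative multiple of $I$.

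One step is computed incorrectly, however. In $U''$ and $V''$ you dropped the factor $\abs{A}^{-2}$ that your own formula puts on $\langle A,B\rangle^2$. The correct values are
\[
U''=na^{n-2}\omega\Big(a^2\omega+16\big(\tfrac n2-2\big)a^{-2}I\Big),\qquad V''=nb^{n-2}\big(b^2\omega+(n-2)b^{-2}I\big).
\]
Dividing $U''V-UV''$ by $na^{n-2}b^{n-2}\omega^2$ therefore gives
\[
I\Big(8(n-4)\frac{b^2}{a^2}-(n-2)\frac{a^2}{b^2}\Big)=2I\Big(\frac{2(n-4)}{n-1}-(n-1)(n-2)\Big).
\]
This is exactly the paper's bracket, not $-2b^2I(n^2-7n+18)$. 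A homogeneity check exposes the slip: under $(\xi,\eta)\mapsto(\lambda\xi,\lambda\eta)$ the cancelled terms have degree $4$, while your $b^2I$ and $a^2I$ have degree $6$. The conclusion survives, since the bracket equals $-3$ for $n=3$ and $\frac{2(n-4)}{n-1}<2<(n-1)(n-2)$ for odd $n\ge5$. But your discriminant argument has to be replaced by this one.

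Your two side concerns are harmless.

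\textbf{Constancy of $\abs{\eta}$ and $\abs{\curl\eta}$.} The identities $\abs{\eta}\equiv b$ and $\abs{\curl\eta}\equiv a$ hold because $\eta$ is the push-forward of $\xi$ by an orthogonal map $R$ of $\R^n$ with $Re_1=-e_2$ and $R\Sigma R^{-1}=\Sigma'$. Such an $R$ induces an isometry of $\S^n$, so Lemma~\ref{lem6.1} applies verbatim; the paper uses this tacitly.

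\textbf{Gauge constraint.} Proposition~\ref{prop6.8} and its proof concern the unconstrained quotient \eqref{L_pq}, so nothing more is needed for the statement. Your worry about the gauge-optimized quotient behind \eqref{A1} nevertheless resolves cleanly. Let $X=\xi^\sharp$, a Killing field of constant length. Then $\nabla_XX=0$ and $(\nabla_X\eta)(X)=0$, so $X\langle\xi,\eta\rangle=0$ and hence $\rd^*(\langle\xi,\eta\rangle\xi)=-X\langle\xi,\eta\rangle=0$. Thus the first-order part of $\rd^*(\abs{\xi+t\eta}^{n-2}(\xi+t\eta))$ vanishes. The inequality $\abs{u+v}^n\ge\abs{u}^n+n\abs{u}^{n-2}\langle u,v\rangle+c_n\abs{v}^n$ then shows that the optimal gauge lowers the denominator by at most $O(t^{2n/(n-1)})=o(t^2)$. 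So $\eta$ is a negative direction for that quotient as well.
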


\begin{proof}
For short we denote $a \coloneqq n\big(\frac{n-1}{2}\big)^{1/2}$ and $b \coloneqq \frac{n}{2}$. Then Lemma~\ref{lem6.1} becomes
\eq{
    \abs{\curl\xi} \equiv a, \quad \abs{\xi} \equiv b.
}
In view of Lemma~\ref{lem6.3}, we are able to compute the second variation at $\xi$:
\eq{
    \frac{\rd^2}{\rd t^2}\Big|_{t=0}J_{\frac{n}{2},1}(\xi+t\varphi) = &
    b^{-2n}
    \Bigg\{ \frac{n}{2}a^{n-4}b^{n} \omega_n^{-1} \Big( 2(n-1)\int \<\xi, \varphi\> \Big)^2
    + \Big(\frac{n}{2}-2\Big) a^{n-4}b^{n} \int \<\curl\xi,\curl\varphi\>^2\\
    &+ a^{n-2}b^{n} \int \abs{\curl\varphi}^2 - (n-2)a^{n}b^{n-4} \int \<\xi,\varphi\>^2
    - a^{n}b^{n-2} \int \abs{\varphi}^2 \Bigg\}.
}
Choose $\varphi=\eta$. By Corollary~\ref{coro6.7} we have
\eq{
    \int \<\curl\xi,\curl\eta\>^2 = 16 \int\<\xi,\eta\>^2.
}
Hence
\eq{
    \frac{\rd^2}{\rd t^2}\Big|_{t=0}J_{\frac{n}{2},1}(\xi+t\varphi)
    &= b^{-2n}
    \Bigg\{ 16\Big(\frac{n}{2}-2\Big) a^{n-4}b^{n} \int \<\xi,\eta\>^2 + a^{n}b^{n}\omega_n - (n-2)a^{n}b^{n-4} \int \<\xi,\eta\>^2
    - a^{n}b^{n}\omega_n \Bigg\}\\
    &= a^{n-2}b^{-n-2}\Big( \frac{b^2}{a^2}\cdot 8(n-4) - \frac{a^2}{b^2}(n-2) \Big)\int \<\xi,\eta\>^2\\
    &= 2a^{n-2}b^{-n-2}\Big( \frac{2(n-4)}{n-1} - (n-1)(n-2) \Big)\int \<\xi,\eta\>^2 \leq 0.
}
Recall that $\<\xi,\eta\>\not\equiv0$, hence the inequality is strict.
\end{proof}
\medskip

\noindent{\it Proof of Theorem~\ref{thm1.3}.}
Proposition~\ref{prop6.8} provides a direction along which the second variation of $J_{\frac{n}{2},1}$ at $\xi$ is strictly negative. Therefore $\xi$ cannot be a local minimizer, and Theorem~\ref{thm1.3} follows. \qed

\subsection{Proof of Theorem~\ref{thm1.2}}

Assume $n\equiv 3\pmod 4$ and set $k=\frac{n-1}{2}$. We prove that Killing $k$-forms are not local minimizers.

It is enough to show that for any $1<p<n$, the (formal) second variation of $J_{p,k}$ at any $\xi\in E_1$ admits a negative direction. The case of negative Killing $k$-forms is identical.

\begin{proposition}\label{prop6.9}
The (formal) second variation of $J_{p,\frac{n-1}{2}}$ at $\xi\in E_1$ admits negative directions.
\end{proposition}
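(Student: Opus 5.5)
We follow the strategy of Proposition~\ref{prop6.8}, now for the quotient $J_{p,k}$ with $k=\frac{n-1}{2}$, $q=\frac{np}{n-p}$, at a normalized $\xi\in E_1$ with $\abs{\xi}=1$ and $\abs{\curl\xi}=\frac{n+1}{2}\eqqcolon a$. The test direction is $\varphi=\varphi_{-1}\in E_{-1}$, chosen exactly as in the proof of Proposition~\ref{prop5.1} so that $\<\xi,\varphi_{-1}\>\not\equiv0$. By Proposition~\ref{new_eigenfunction}, $\<\xi,\varphi_{-1}\>\in P_2$, so it has zero mean.

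\textbf{Step 1: general second variation.} Write $J_{p,k}=U/V$ with $U=(\int\abs{\curl\alpha}^p)^{q/p}$ and $V=\int\abs{\alpha}^q$. Since $\abs{\xi}$ and $\abs{\curl\xi}$ are constant and $\curl\xi=a\xi$, $\xi$ is critical, and $J''=(U''V-UV'')/V^2$. Expanding $\abs{\curl\xi+t\curl\varphi}^p$ and $\abs{\xi+t\varphi}^q$ to second order gives
\begin{align}
U''&=q(q-p)a^{2q-4}\omega_n^{q/p-2}\Big(\int\<\curl\xi,\curl\varphi\>\Big)^2+q\,a^{q-p}\omega_n^{q/p-1}\Big[a^{p-2}\int\abs{\curl\varphi}^2+(p-2)a^{p-4}\int\<\curl\xi,\curl\varphi\>^2\Big],\\
V''&=q\int\abs{\varphi}^2+q(q-2)\int\<\xi,\varphi\>^2,
\end{align}
with $U(\xi)=a^q\omega_n^{q/p}$ and $V(\xi)=\omega_n$.

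\textbf{Step 2: plug in $\varphi=\varphi_{-1}$.} Here $\curl\varphi_{-1}=-a\varphi_{-1}$. Hence $\abs{\curl\varphi}^2=a^2\abs{\varphi}^2$, $\<\curl\xi,\curl\varphi\>=-a^2\<\xi,\varphi\>$, and $\int\<\xi,\varphi\>=0$, so the squared-mean term drops. The $\int\abs{\varphi}^2$ contributions, $q a^q\omega_n^{q/p}$ from $U''V$ and $q a^q\omega_n^{q/p}$ from $UV''$, cancel exactly. The remainder is
\[
J''\,\omega_n^2=q\,a^q\omega_n^{q/p}\big[(p-2)-(q-2)\big]\int\<\xi,\varphi_{-1}\>^2 .
\]
Since $q-p=\frac{p^2}{n-p}>0$, this equals $-q(q-p)a^q\omega_n^{q/p-2}\int\<\xi,\varphi_{-1}\>^2<0$. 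This agrees with Proposition~\ref{prop5.1}, where equality in H\"older fails precisely through the nonconstancy of $\<\xi,\varphi_{-1}\>$.

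\textbf{Step 3: the conformal case and the main obstacle.} For $p=\frac{2n}{n+1}$ the denominator of $J_2$ includes the infimum over $\rd\phi$, and $J_2\le J_{p,k}$. This inequality cannot pass to the second variation directly, since $\xi$ is critical for both functionals with the same value, so we use it at the level of values. Along $\xi+t\varphi_{-1}$ we have $J_{p,k}(\xi+t\varphi_{-1})<J_{p,k}(\xi)=J_2(\xi)$ for small $t\neq0$, and therefore $J_2(\xi+t\varphi_{-1})\le J_{p,k}(\xi+t\varphi_{-1})<J_2(\xi)$. Thus $\xi$ is not a local minimizer of $J_2$ either, and conformal invariance extends this to all of $\mathcal M$.

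The only real risk is bookkeeping in Step 1, namely the $p-2$ and $q-2$ coefficients and the exact cancellation of the $\int\abs{\varphi}^2$ terms. I would double-check these by specializing to $p=2$, where the formula must reduce to the earlier H\"older computation.
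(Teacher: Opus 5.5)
Your Steps 1--2 are correct and follow the paper's own proof. You use the same test direction $\varphi_{-1}\in E_{-1}$ with $\<\xi,\varphi_{-1}\>\not\equiv0$ (supplied by Proposition~\ref{prop5.1}), obtain the same cancellation of the $\int\abs{\varphi}^2$ terms, and reach the same value $-q(q-p)\big(\frac{n+1}{2}\big)^{q}\omega_n^{q/p-2}\int\<\xi,\varphi_{-1}\>^2<0$. One small fix: in the squared-mean term of $U''$ the factor $a^{2q-4}$ should be $a^{q-4}$. This is harmless, because that term vanishes when $\int\<\xi,\varphi_{-1}\>=0$.

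Step 3 goes beyond the statement, and as written it is wrong. Since $\inf_\phi\int\abs{\alpha-\rd\phi}^q\le\int\abs{\alpha}^q$, gauge optimization can only shrink the denominator. So for $p=\frac{2n}{n+1}$, $q=\frac{2n}{n-1}$, you get $J_2(\alpha)\ge J_{p,\frac{n-1}{2}}(\alpha)^{2/q}$, not $J_2\le J_{p,k}$. The two functionals also differ by the power $2/q=\frac{n-1}{n}$; for example $J_{p,\frac{n-1}{2}}(\xi)=J_2(\xi)^{q/2}$, not $J_2(\xi)$. The power is only a monotone reparametrization. The direction of the inequality, however, is fatal: a decrease of $J_{p,k}$ along $\xi+t\varphi_{-1}$ gives no information about $J_2$. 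Formally, the gauge correction can only increase the second variation of $J_2$ relative to that of $J_{p,k}$.

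To get non-minimality for $J_2$ you need the separate argument of Proposition~\ref{prop5.1}. The form $\alpha=\xi+t\varphi_{-1}$ is co-closed, so $L^2$-orthogonality of exact and co-closed forms gives $\inf_\phi\int\abs{\alpha-\rd\phi}^2=\int\abs{\alpha}^2$. H\"older's inequality in numerator and denominator then gives $J_2(\alpha)\le\frac{(n+1)^2}{4}\omega_n^{2/n}$. The inequality is strict because $\abs{\curl\alpha}=\frac{n+1}{2}\abs{\xi-t\varphi_{-1}}$ is nonconstant. For the proposition as stated, which concerns only $J_{p,\frac{n-1}{2}}$, your Steps 1--2 already suffice.
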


\begin{proof}
For $\xi\in E_1$ the (formal) second variation is
\eq{
    \frac{\rd^2}{\rd t^2}\Big|_{t=0}J_{p,\frac{n-1}{2}}(\xi+t\varphi)
    = &\,q\Big(\frac{n+1}{2}\Big)^{q-2}\omega_n^{\frac{q}{p}-2}
    \Bigg\{ (q-p)\Big(\frac{n+1}{2}\Big)^2\omega_n^{-1} \Big( \int \<\xi, \varphi\> \Big)^2 + \int \abs{\curl \varphi}^2\\
    &\qquad + (p-2) \int \<\xi, \curl \varphi\>^2 - (q-2)\Big(\frac{n+1}{2}\Big)^2 \int \<\xi,\varphi\>^2 - \Big(\frac{n+1}{2}\Big)^2 \int \abs{\varphi}^2 \Bigg\}.
}

Choosing $\varphi = \varphi_{-1}\in E_{-1}$ yields
\eq{\label{second_variation_L_pq}
    \frac{\rd^2}{\rd t^2}\Big|_{t=0}J_{p,\frac{n-1}{2}}(\xi+t\varphi)
    = q\Big(\frac{n+1}{2}\Big)^{q-2}\omega_n^{\frac{q}{p}-2}
    \Bigg\{ (p-q)\Big(\frac{n+1}{2}\Big)^2 \int \<\xi,\varphi_{-1}\>^2 \Bigg\}.
}

By Proposition~\ref{prop5.1}, $\varphi_{-1}$ can be chosen so that $\<\xi,\varphi_{-1}\>\not\equiv 0$. Since $p-q<0$, the right-hand side of \eqref{second_variation_L_pq} is strictly negative, proving the claim.
\end{proof}

\begin{proof}[Proof of Theorem~\ref{thm1.2}]
It follows immediately from Proposition~\ref{prop6.9}.  
\end{proof}

\medskip

We now end the paper by an open question, which we believe is true. Define an action of $g\in\{1\}\times {\rm SO}(n-1)$ on vector fields by
\eq{u_g(x) \coloneqq g\cdot u(g^{-1}\cdot x).}
The Loss--Yau solutions constructed in \eqref{construction_xi} are invariant under this action. Are they minimizers of $J_2$ within the class of invariant vector fields? By contrast, our counterexample appears to be non-invariant under this group action. A similar question was also raised in \cite{MS25}.

\appendix

\section{Classification of eigenforms}\label{secA}

We now give a complete classification of curl eigenforms on $\S^n$. We believe this result is of independent interest.

\begin{theorem}\label{thmA.1}
Let $E_{\pm k}$ be as defined above. Then $E_1$ is the space of positive Killing $\frac{n-1}{2}$-forms. Moreover, for every $k\ge 1$ we have
\eq{
    E_{k} &= \Big\{ \alpha = (n+k-2)\sum_i f_i\xi_i - \sum_i \nabla f_i \ip*\xi_i \,\Big|\, \xi_i\in E_1,\ f_i\in P_{k-1},\ \alpha\ \text{co-closed} \Big\} \\
    &= \mathrm{span}\big\{ (n+k-2)f\xi - \nabla f\ip*\xi \,\big|\, \xi\in E_1,\ f\in P_{k-1} \big\} \cap \rd^*\Omega^{\frac{n-3}{2}},
}
and
\eq{
    E_{-k} &= \Big\{ \alpha = -(k+1)\sum_i f_i\xi_i - \sum_i \nabla f_i \ip*\xi_i \,\Big|\, \xi_i\in E_1,\ f_i\in P_{k+1},\ \alpha\ \text{co-closed} \Big\}\\
    &= \mathrm{span}\big\{ -(k+1)f\xi - \nabla f\ip*\xi \,\big|\, \xi\in E_1,\ f\in P_{k+1} \big\} \cap \rd^*\Omega^{\frac{n-3}{2}}.
}
\end{theorem}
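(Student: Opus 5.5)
The plan is to handle the two inclusions separately, and to use the global frame from Proposition~\ref{Killing_properties}(3) together with the spectral information in Proposition~\ref{new_eigenfunction}. Throughout I read the constraint ``$\alpha$ co-closed'' as $\alpha\in{\rm ker}(\rd^*)=\rd^*\Omega^{\frac{n+1}{2}}$, as in \eqref{sphere_Hodge}. Fix an orthonormal basis $\{\xi_i\}$ of $E_1$. Every $\frac{n-1}{2}$-form is then uniquely written as $\alpha=\sum_i h_i\xi_i$ with $h_i=\<\xi_i,\alpha\>$. By Proposition~\ref{Killing_properties}(4),
\[
\curl\alpha=\tfrac{n+1}{2}\alpha-\sum_i\nabla h_i\ip*\xi_i .
\]
This identity is the basic tool.

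\emph{Inclusion $E_{\pm k}\subset$ right-hand side.} Let $\alpha\in E_k$. Proposition~\ref{new_eigenfunction}(1) gives $h_i\in P_{k-1}$. The eigen-equation $\curl\alpha=(\frac{n-1}{2}+k)\alpha$ combined with the identity above gives $\sum_i\nabla h_i\ip*\xi_i=(1-k)\alpha$. Hence
\[
(n+k-2)\sum_i h_i\xi_i-\sum_i\nabla h_i\ip*\xi_i=(n+2k-3)\,\alpha .
\]
So $\alpha$ has the asserted form with $f_i=h_i/(n+2k-3)\in P_{k-1}$; it is automatically co-closed, since it is a nonzero-eigenvalue eigenform.

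For $\alpha\in E_{-k}$ the argument is the same. Proposition~\ref{new_eigenfunction}(2) gives $h_i\in P_{k+1}$, and the eigen-equation yields $\sum_i\nabla h_i\ip*\xi_i=(n+k)\alpha$. Then $-(k+1)\sum h_i\xi_i-\sum\nabla h_i\ip*\xi_i=-(n+2k+1)\alpha$, so one takes $f_i=-h_i/(n+2k+1)\in P_{k+1}$.

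The first statement, that $E_1$ consists of the positive Killing forms, is just Proposition~\ref{Killing_eigenform}.

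\emph{Converse inclusion.} Let $\alpha=(n+k-2)\sum f_i\xi_i-\sum\nabla f_i\ip*\xi_i$ be co-closed, with $f_i\in P_{k-1}$. I would compute $\curl\alpha$ directly. For this I generalize Proposition~\ref{Killing_properties}(5) to $f\in P_m$, using $\curl(\nabla f\ip*\xi)=-\rd^*(\rd f\w\xi)$, Proposition~\ref{basic_form}(4)(6)(7), and $\nabla\xi=X\ip*\xi$. This produces
\[
\curl(\nabla f\ip*\xi)=-m(n+m-1)f\xi-\tfrac{n-1}{2}\nabla f\ip*\xi+R_f ,
\]
where the remainder $R_f$ involves the trace-free Hessian of $f$ contracted into $\xi$. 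The point is that $R_f$ does not vanish for $m\ge2$.

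This is the main obstacle, and I expect the co-closedness hypothesis to be exactly what disposes of $R_f$. Concretely, $\rd^*\alpha=0$ should express $\sum_i R_{f_i}$ as a combination of $\sum f_i\xi_i$ and $\sum\nabla f_i\ip*\xi_i$. Plugging that back in should give $\curl\alpha=(\frac{n-1}{2}+k)\alpha$.

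If the direct computation becomes unmanageable, I would fall back on a spectral argument. Expand the co-closed $\alpha$ into curl eigenforms $\varphi_{\pm j}$, and pair each component with $\xi_i$. By Proposition~\ref{new_eigenfunction}, the coefficient functions of the components lie in distinct $P_l$'s. These are $L^2$-orthogonal, so comparing with the spherical-harmonic degrees appearing in $\<\xi_i,\alpha\>$ leaves only $E_k$ and $E_{-(k-2)}$. The $E_{-(k-2)}$ piece would then be excluded by comparing the two linear relations between $\sum h\xi$ and $\sum\nabla h\ip*\xi$ derived above, since their coefficients $(1-k)$ and $(n+k-2)$ differ.

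Some care is needed here: $\<\xi_i,\nabla f_j\ip*\xi_j\>$ need not vanish, so the frame coefficients $h_i$ of $\alpha$ are not simply $(n+k-2)f_i$. I would first check that these cross terms still lie in $P_{k-1}$, by differentiating $\<\xi_i,X\ip*\xi_j\>$ using Proposition~\ref{Killing_properties}(1).

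The $E_{-k}$ case follows by the same computation with $f_i\in P_{k+1}$ and the coefficient $-(k+1)$.
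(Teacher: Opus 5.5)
\textbf{Verdict: the converse inclusion has a genuine gap.} Your forward inclusion of $E_{\pm k}$ into the right-hand sides is correct, and it is more direct than the paper's. From $\curl\alpha=\frac{n+1}{2}\alpha-\sum_i\nabla h_i\ip*\xi_i$ and the eigen-equation you get $\sum_i\nabla h_i\ip*\xi_i=(1-k)\alpha$, resp.\ $(n+k)\alpha$, and the rest is algebra. The converse inclusion is the real content of the theorem, and neither of your routes proves it.

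\textbf{The direct route.} You stop exactly at the decisive step (``$\rd^*\alpha=0$ should express $\sum_i R_{f_i}$\dots''). Your displayed formula is also wrong. For $f\in P_1$ the trace-free Hessian vanishes, yet your formula gives $-nf\xi-\frac{n-1}{2}\nabla f\ip*\xi$, which contradicts Proposition~\ref{Killing_properties}(5). The correct pointwise formula is
\[
\curl(\nabla f\ip*\xi)=(\mathrm{tr}\,\mathrm{Hess}\,f)\,\xi-\sum_j(\mathrm{Hess}\,f)(e_j)^\flat\w e_j\ip\xi+\nabla f\ip*\xi .
\]
The identity you actually need is that, whenever $\sum_i f_i\xi_i$ is co-closed,
\[
\curl\Big(\sum_i\nabla f_i\ip*\xi_i\Big)=-\sum_i(\nabla^*\nabla f_i)\,\xi_i-\frac{n-3}{2}\sum_i\nabla f_i\ip*\xi_i .
\]

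\textbf{How the paper gets it.} It never touches the Hessian. It computes $\curl\curl(\sum_i f_i\xi_i)$ twice: once by applying Proposition~\ref{Killing_properties}(4) twice, and once via $\curl^2=\Delta$ on co-closed forms together with the Weitzenb\"ock formula $\Delta(f\xi)=(\nabla^*\nabla f)\xi+\frac{(n+1)^2}{4}f\xi-2\nabla f\ip*\xi$. Comparing the two gives the identity. For $f_i\in P_k$ with $\sum_i f_i\xi_i$ co-closed, it follows that $(n+k-1)\sum_i f_i\xi_i-\sum_i\nabla f_i\ip*\xi_i$ and $-k\sum_i f_i\xi_i-\sum_i\nabla f_i\ip*\xi_i$ are curl eigenforms, with eigenvalues $\frac{n+1}{2}+k$ and $-(\frac{n+1}{2}+k-2)$. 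Writing $\sum_i f_i\xi_i$ as a combination of the two yields both inclusions at once.

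\textbf{How to close your direct route.} Differentiate the co-closedness condition $\sum_i\nabla f_i\ip\xi_i=0$ along $e_j$, wedge with $e_j^\flat$, and sum. By symmetry of the Hessian this gives
\[
\sum_{i,j}(\mathrm{Hess}\,f_i)(e_j)^\flat\w e_j\ip\xi_i=\frac{n-1}{2}\sum_i\nabla f_i\ip*\xi_i ,
\]
which is exactly what disposes of the Hessian term.

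\textbf{A fact both routes need.} Since $\rd^*(f\xi)=-\nabla f\ip\xi$ and $\rd^*(\nabla f\ip*\xi)=-\frac{n+1}{2}\nabla f\ip\xi$, one has $\rd^*\alpha=\big(k+\frac{n-5}{2}\big)\,\rd^*\big(\sum_i f_i\xi_i\big)$. So for $k\ge2$ (the case $k=1$ is trivial), $\alpha$ is co-closed iff $\sum_i f_i\xi_i$ is. Your proposal never uses this.

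\textbf{The spectral fallback.} It does not close as stated either. After writing $\alpha=\varphi+\psi\in E_k\oplus E_{-(k-2)}$, the relations $\sum_i\nabla h_i\ip*\xi_i=(1-k)\varphi$, resp.\ $(n+k-2)\psi$, concern the frame coefficients of the eigencomponents of $\alpha$. But $\alpha$ is given through the $f_i$, and nothing in your sketch relates the two; this is precisely the cross-term issue you noticed. So ``the coefficients differ'' does not exclude $\psi$.

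\textbf{The repair: expand $\sum_i f_i\xi_i$ instead of $\alpha$.}
\begin{itemize}
\item[(i)] $\sum_i f_i\xi_i$ is co-closed by the identity above, and its frame coefficients are $f_i\in P_{k-1}$.
\item[(ii)] Proposition~\ref{new_eigenfunction} plus the frame property then give $\sum_i f_i\xi_i=\varphi+\psi$ with $\varphi\in E_k$ and $\psi\in E_{-(k-2)}$. Indeed, each $P_l$ receives contributions only from $E_{l+1}$ and $E_{-(l-1)}$; for $l\neq k-1$ their sum has vanishing frame coefficients, so both vanish.
\item[(iii)] Your forward-direction identities give $\sum_i\nabla f_i\ip*\xi_i=(1-k)\varphi+(n+k-2)\psi$.
\item[(iv)] Hence $\alpha=(n+2k-3)\varphi\in E_k$, with no cross-term analysis needed.
\end{itemize}
For $E_{-k}$ the same argument gives $\sum_i f_i\xi_i\in E_{k+2}\oplus E_{-k}$ and $\alpha=-(n+2k+1)\psi$. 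Completed this way, your argument would be a genuine alternative to the paper's. It trades the Weitzenb\"ock comparison for the eigenform expansion of co-closed forms together with Proposition~\ref{new_eigenfunction}.
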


\begin{proof}
The case $k=1$ is clear, so we only classify $E_k$ for $k\ge 2$ and $E_{-k}$ for $k\ge 1$.

Let $\alpha$ be a co-closed $\frac{n-1}{2}$-form. By Proposition~\ref{Killing_properties}(3) we may write
\eq{
    \alpha = \sum_i f_i\xi_i \quad \text{with}\quad \xi_i\in E_1,
}
for some functions $f_i$. By Proposition~\ref{Killing_properties}(4),
\eq{\label{classification_eq0}
    \curl \alpha = \curl\Big(\sum_i f_i\xi_i\Big) = \frac{n+1}{2} \sum_i f_i\xi_i - \sum_i \nabla f_i\ip *\xi_i.
}
Consequently,
\eq{\label{classification_eq1}
    \curl\curl\alpha &= \frac{n+1}{2} \curl\Big(\sum_i f_i\xi_i\Big) - \curl\Big(\sum_i \nabla f_i\ip *\xi_i\Big)\\
    &= \frac{(n+1)^2}{4} \sum_i f_i\xi_i - \frac{n+1}{2} \sum_i \nabla f_i\ip *\xi_i - \curl\Big(\sum_i \nabla f_i\ip *\xi_i\Big).
}
Since $\alpha$ is co-closed, $\curl\curl\alpha=\Delta\alpha$.

Next, for $\xi\in E_1$ and a function $f$, the Weitzenb\"ock formula \eqref{sphere_Weitzenbock} gives
\begin{align}
    \Delta(f\xi) &= \nabla^*\nabla(f\xi) + \frac{(n+1)(n-1)}{4}f\xi\\
    &= (\nabla^*\nabla f)\xi + f\nabla^*\nabla\xi - 2\nabla_{e_j}f\,\nabla_{e_j}\xi + \frac{(n+1)(n-1)}{4}f\xi\\
    &= (\nabla^*\nabla f)\xi + f\Big(\Delta\xi - \frac{(n+1)(n-1)}{4}\xi\Big) - 2\nabla_{\nabla f}\xi + \frac{(n+1)(n-1)}{4}f\xi\\
    &= (\nabla^*\nabla f)\xi + \frac{(n+1)^2}{4}f\xi - 2\nabla f \ip*\xi,
\end{align}
where we used Proposition~\ref{Killing_properties}(1) in the last equality. Therefore
\eq{ \label{classification_eq2}
    \curl\curl\alpha = \Delta\alpha = \sum_i (\nabla^*\nabla f_i)\xi_i + \frac{(n+1)^2}{4} \sum_i f_i\xi_i - 2\sum_i \nabla f_i \ip*\xi_i.
}
Comparing \eqref{classification_eq1} and \eqref{classification_eq2} yields
\eq{\label{classification_eq3}
    \curl \Big(\sum_i \nabla f_i\ip *\xi_i\Big) = \sum_i (-\nabla^*\nabla f_i)\xi_i - \frac{n-3}{2} \sum_i \nabla f_i \ip*\xi_i.
}
In particular, if $f\in P_k$, then
\eq{\label{classification_eq4}
    \curl \Big(\sum_i \nabla f_i\ip *\xi_i\Big) = -k(n+k-1) \sum_i f_i\xi_i - \frac{n-3}{2} \sum_i \nabla f_i \ip*\xi_i.
}
Combining \eqref{classification_eq0} and \eqref{classification_eq4}, we obtain
\eq{
    \curl \Big( (n+k-1)\sum_i f_i\xi_i - \sum_i \nabla f_i\ip*\xi_i \Big)
    = \Big( \frac{n+1}{2} + k \Big) \Big( (n+k-1)\sum_i f_i\xi_i - \sum_i \nabla f_i\ip*\xi_i \Big)
}
and
\eq{\label{classification_eq4.5}
    \curl \Big( -k\sum_i f_i\xi_i - \sum_i \nabla f_i\ip*\xi_i \Big)
    = \Big( -\frac{n+1}{2} - (k-2) \Big) \Big( -k\sum_i f_i\xi_i - \sum_i \nabla f_i\ip*\xi_i \Big).
}

Define
\eq{
    V_k \coloneqq \Big\{ \alpha = (n+k-1)\sum_i f_i\xi_i - \sum_i \nabla f_i \ip*\xi_i \,\Big|\, \xi_i\in E_1,\ f_i\in P_{k},\ \alpha\ \text{co-closed} \Big\}
}
and
\eq{
    W_k \coloneqq \Big\{ \alpha = -k\sum_i f_i\xi_i - \sum_i \nabla f_i \ip*\xi_i \,\Big|\, \xi_i\in E_1,\ f_i\in P_{k},\ \alpha\ \text{co-closed} \Big\}.
}
For $k\ge 1$ (and with the convention $E_0=\{0\}$) we have
\eq{\label{classification_eq5}
    V_{k} \subset E_{k+1},\quad W_{k} \subset E_{-(k-1)}.
}

Conversely, let $\varphi_{k+1}\in E_{k+1}$. Writing
\eq{
    \varphi_{k+1} = \sum_i h_i\xi_i
}
for some functions $h_i$, Proposition~\ref{new_eigenfunction} implies $h_i\in P_k$. Since $\varphi_{k+1}$ is co-closed, we obtain
\eq{
    \varphi_{k+1} \in \Big\{ \alpha = \sum_i f_i\xi_i \,\Big|\, \xi_i\in E_1,\ f_i\in P_k,\ \alpha\ \text{co-closed} \Big\} \subset V_k\oplus W_k.
}
Hence $E_{k+1}\subset V_k\oplus W_k$. The same argument gives $E_{-(k-1)}\subset V_k\oplus W_k$. Together with \eqref{classification_eq5},
\eq{
    V_k \oplus W_k \subset E_{k+1}\oplus E_{-(k-1)} \subset V_k \oplus W_k,
}
so $E_{k+1}=V_k$ and $E_{-(k-1)}=W_k$, as claimed.
\end{proof}

\begin{remark}
(1) For any function $f$ and any form $\alpha$,
\eq{
    \rd^*(f\alpha) = f\rd^*\!\alpha - \nabla f \ip\alpha.
}
In particular, for $\xi\in E_1$ we have
\eq{
    \rd^*(f\xi) = -\nabla f\ip\xi.
}
Moreover, using Proposition~\ref{Killing_properties}(4),
\eq{
    \rd^*(\nabla f \ip*\xi)
    &= \frac{n+1}{2}\rd^*(f\xi) - \rd^*\Big(\frac{n+1}{2}f\xi - \nabla f \ip*\xi\Big)\\
    &= -\frac{n+1}{2}\nabla f\ip\xi - \rd^*(*\rd(f\xi))\\
    &= -\frac{n+1}{2}\nabla f\ip\xi.
}
Consequently, for $\alpha=\sum_i f_i\xi_i$ the co-closedness constraint is equivalent to
\eq{
    \sum_i \nabla f_i\ip\xi_i = 0.
}

(2) If $k=2$, then Theorem~\ref{thmA.1} reduces to our earlier classification Proposition~\ref{classification}. Indeed, the condition $\sum_i \nabla f_i\ip\xi_i = 0$ from (1) implies
\eq{
    \sum_i f_i\xi_i + \sum_i \nabla f_i\ip*\xi_i = 0 \qquad \text{for } f_i\in P_1.
}
Equivalently (see $k=1$ in \eqref{classification_eq4.5}, or (1) together with Proposition~\ref{Killing_properties}(4)),
\eq{
    \rd^* \Big(\sum_i f_i\xi_i + \sum_i \nabla f_i\ip*\xi_i\Big)
    = \rd \Big(\sum_i f_i\xi_i + \sum_i \nabla f_i\ip*\xi_i\Big) = 0.
}

\item There is an analogous classification using negative Killing forms as a trivialization; see Proposition~\ref{Killing_properties}(3).
\end{remark}

\section{A new conformal invariant}\label{secB}

On a compact Riemannian manifold $(M,g)$, consider the conformal class
\eq{[g]=\{\tilde g\mid \tilde g=u^2 g\}.}
Denote by $\lambda_1^+(\tilde g)$ the first positive eigenvalue of the curl operator with respect to $\tilde g\in[g]$, and define the conformal invariant
\eq{
    \mu([g]) \coloneqq \inf_{\tilde g\in[g]} \lambda_1^+(\tilde g)\,\rVol(\tilde g)^{\frac{1}{n}}.
}
It is known that $\mu([g])\in(0,\infty)$; see \cite{Jammes07}. Determining whether the infimum is achieved (and describing optimizers) appears to be difficult; this remains open even for $n=3$ and $(M,g)=(\S^3,g_{\rm st})$.  The invariant $\mu([g])$ is closely related to the functional $J_1$.

\begin{lemma}
\eq{
    \mu([g]) = \inf_{\int \< \curl\alpha, \alpha\> >0} J_1(\alpha,g).
}
\end{lemma}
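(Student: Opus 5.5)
We prove the two inequalities separately. Two facts about a metric $\tilde g=u^2g$ will be used throughout. First, the quantities in \eqref{functionals} are conformally invariant, so $J_1(\alpha,\tilde g)=J_1(\alpha,g)$. Second, $\curl_{\tilde g}=u^{-1}\curl_g$ and $\rd V_{\tilde g}=u^n\rd V_g$.

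\emph{Step 1: $\mu([g])\ge\inf J_1$.} Fix $\tilde g=u^2g$ and let $\alpha$ be a first positive curl eigenform for $\tilde g$, so $\curl_{\tilde g}\alpha=\lambda\alpha$ with $\lambda=\lambda_1^+(\tilde g)$. Then $\int_{\tilde g}\<\curl_{\tilde g}\alpha,\alpha\>=\lambda\int_{\tilde g}|\alpha|^2>0$. By H\"older's inequality with respect to $\rd V_{\tilde g}$,
\[
\Big(\int|\curl_{\tilde g}\alpha|^{\frac{2n}{n+1}}\Big)^{\frac{n+1}{n}}\le \rVol(\tilde g)^{\frac1n}\int|\curl_{\tilde g}\alpha|^2=\lambda^2\rVol(\tilde g)^{\frac1n}\int|\alpha|^2 .
\]
Hence $J_1(\alpha,g)=J_1(\alpha,\tilde g)\le\lambda\,\rVol(\tilde g)^{1/n}$. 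Taking the infimum over $\tilde g\in[g]$ gives this inequality.

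\emph{Step 2: $\mu([g])\le\inf J_1$.} Take $\alpha$ with $\int\<\curl\alpha,\alpha\>>0$; by density we may assume it is smooth. We choose the metric so that H\"older becomes an equality: set $\tilde g=u^2g$ with $u=|\curl_g\alpha|_g^{\frac{2}{n+1}}$. Then
\[
|\curl_{\tilde g}\alpha|_{\tilde g}=u^{-\frac{n-1}{2}}u^{-1}|\curl_g\alpha|_g=u^{-\frac{n+1}{2}}|\curl_g\alpha|_g\equiv1 .
\]
It follows that $\int|\curl_{\tilde g}\alpha|^2\rd V_{\tilde g}=\rVol(\tilde g)$, and $J_1(\alpha,g)=\rVol(\tilde g)^{\frac{n+1}{n}}/\int_{\tilde g}\<\curl_{\tilde g}\alpha,\alpha\>$.

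Next we use the variational characterization of $\lambda_1^+(\tilde g)$. Decompose $\alpha$ (modulo closed forms) in $\tilde g$-curl eigenforms. Writing $\beta=\curl_{\tilde g}\alpha$, we have
\[
\int\<\curl\alpha,\alpha\>=\sum_{\lambda\neq0}\lambda|a_\lambda|^2\le\sum_{\lambda>0}\lambda|a_\lambda|^2\le\frac{1}{\lambda_1^+}\sum_{\lambda>0}\lambda^2|a_\lambda|^2\le\frac{1}{\lambda_1^+}\int|\beta|^2 .
\]
Here the middle inequality uses $\lambda^{-1}\le 1/\lambda_1^+$ for $\lambda>0$. Thus $\int_{\tilde g}\<\curl\alpha,\alpha\>\le\rVol(\tilde g)/\lambda_1^+(\tilde g)$, so
\[
J_1(\alpha,g)\ge\lambda_1^+(\tilde g)\rVol(\tilde g)^{\frac1n}\ge\mu([g]).
\]

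\emph{Main obstacle.} The difficulty in Step 2 is regularity: $u$ may vanish or be nonsmooth where $\curl\alpha=0$, so $\tilde g$ need not be a genuine metric. The fix is to use $u_\varepsilon=(|\curl_g\alpha|^2+\varepsilon^2)^{\frac{1}{n+1}}$, which is smooth and positive. Then $|\curl_{\tilde g}\alpha|_{\tilde g}\le1$, so
\[
\int_{\tilde g}|\curl_{\tilde g}\alpha|^2\le\rVol(\tilde g_\varepsilon)=\int(|\curl_g\alpha|^2+\varepsilon^2)^{\frac{n}{n+1}}\to\int|\curl_g\alpha|^{\frac{2n}{n+1}} .
\]
With this bound the chain of inequalities gives $\mu([g])\le\rVol(\tilde g_\varepsilon)^{\frac{n+1}{n}}/\int\<\curl\alpha,\alpha\>$, and letting $\varepsilon\to0$ gives $\mu([g])\le J_1(\alpha,g)$. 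One must also check that the spectral expansion is valid for smooth forms on a closed manifold, which holds because the nonzero curl spectrum is discrete with $L^2$-complete eigenforms on $\ker(\rd)^\perp$.
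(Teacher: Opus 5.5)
Your proof is correct and follows the paper's argument: H\"older's inequality applied to a first positive eigenform gives one inequality, and the conformal factor $u=|\curl_g\alpha|_g^{2/(n+1)}$ (which makes $|\curl_{\tilde g}\alpha|_{\tilde g}$ constant) combined with the Rayleigh-quotient bound $\int|\curl_{\tilde g}\alpha|^2\ge\lambda_1^+(\tilde g)\int\langle\curl_{\tilde g}\alpha,\alpha\rangle$ gives the other, with a regularization where $\curl\alpha$ vanishes. Your regularization $(|\curl_g\alpha|^2+\varepsilon^2)^{1/(n+1)}$ is slightly cleaner than the paper's $\sqrt{u^2+\varepsilon^2}$, because it gives a smooth conformal factor, and you also make the spectral justification of the Rayleigh bound explicit.
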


\begin{proof}
The argument is standard; we include it for completeness.

Fix $\tilde g\in[g]$, and let $\lambda_1^+(\tilde g)$ and $\alpha_1$ be the first positive eigenvalue and a corresponding eigenform of the curl operator. Using the conformal invariance of $J_1$ and H\"older's inequality,
\eq{
    \inf J_1(\alpha,g)
    = \inf J_1(\alpha,\tilde g)
    \le J_1(\alpha_1,\tilde g)
    = \lambda_1^+(\tilde g)\,
    \frac{\big(\int \abs{\alpha_1}_{\tilde g}^{\frac{2n}{n+1}}\rdV_{\tilde g}\big)^{\frac{n+1}{n}}}{\int \abs{\alpha_1}_{\tilde g}^2\,\rdV_{\tilde g}}
    \le \lambda_1^+(\tilde g)\,\rVol(\tilde g)^{\frac{1}{n}}.
}
Taking the infimum over $\tilde g\in[g]$ gives $\inf J_1(\alpha,g)\le \mu([g])$.

For the reverse inequality, fix $\varepsilon>0$ and choose $\alpha_\varepsilon$ with $\int\<\curl\alpha_\varepsilon,\alpha_\varepsilon\> >0$ such that
\eq{
    J_1(\alpha_\varepsilon,g) \le \inf J_1(\alpha,g) + \varepsilon.
}
After scaling, we may assume
\eq{
    \int \abs{\curl_g\alpha_\varepsilon}_g^{\frac{2n}{n+1}}\rdV_g = 1.
}
Define $u\coloneqq \abs{\curl_g\alpha_\varepsilon}_g^{\frac{2}{n+1}}$ (assuming for the moment that $\curl\alpha_\varepsilon$ is nowhere vanishing) and set $\tilde g\coloneqq u^2 g$. Then
\eq{
    \rVol(\tilde g)=\int u^n\rdV_g = \int \abs{\curl_g\alpha_\varepsilon}_g^{\frac{2n}{n+1}}\rdV_g = 1.
}
Using the conformal transformation formulas from Subsection~\ref{sec2.2},
\eq{
    \Big(\int \abs{\curl_{\tilde g}\alpha_\varepsilon}_{\tilde g}^{\frac{2n}{n+1}}\rdV_{\tilde g}\Big)^{\frac{n+1}{n}}
    = 1 = \int \abs{\curl_{\tilde g}\alpha_\varepsilon}_{\tilde g}^2\,\rdV_{\tilde g}.
}
Therefore,
\eq{
    \inf J_1(\alpha,g) + \varepsilon
    \ge J_1(\alpha_\varepsilon,g)
    = J_1(\alpha_\varepsilon,\tilde g)
    = \frac{\int \abs{\curl_{\tilde g}\alpha_\varepsilon}_{\tilde g}^2\,\rdV_{\tilde g}}{\int \<\curl_{\tilde g}\alpha_\varepsilon,\alpha_\varepsilon\>_{\tilde g}\,\rdV_{\tilde g}}
    \ge \lambda_1^+(\tilde g)
    = \lambda_1^+(\tilde g)\,\rVol(\tilde g)^{\frac{1}{n}}
    \ge \mu([g]).
}
Letting $\varepsilon\to 0$ yields $\inf J_1(\alpha,g)\ge \mu([g])$.

If $u$ has zeros, one can instead take $u_\varepsilon\coloneqq \sqrt{u^2+\varepsilon^2}$ and run the same argument.
\end{proof}

A conformal metric $\tilde g\in[g]$ achieving $\mu([g])$ is called an \emph{optimizer}. There are interesting results on closed $3$-manifolds \cite{EGP25} and on $3$-manifolds with boundary (where one also needs boundary conditions); see \cites{EP20,EGP22}. However, for $\S^3$ this problem remains open.

A natural conjecture, mentioned in the introduction,  is that the standard round metric $g_{\rm st}$ on $\S^n$ is an optimizer, which is equivalent to 
\eq{\label{best_constant}
  S_1=  \inf_{\int \< \curl\alpha, \alpha\> >0} J_1(\alpha,g_{\rm st})
    = \frac{n+1}{2}\,\omega_n^{\frac{1}{n}}.
}

\section{A lower bound in dimension 3}\label{secC}

In this appendix we prove a quantitative lower bound for $S_1$ in dimension $n=3$. Recall that
\eq{
    S_1 \coloneqq \inf\Big\{ \frac{ \Big(\int_{\S^3} \abs{\curl\alpha}^{\frac{3}{2}} \,\rdV \Big)^{\frac{4}{3}} }{ \int_{\S^3} \< \curl\alpha, \alpha\> \,\rdV } \,\Big|\, \rd^*\!\alpha=0,\ \int_{\S^3} \< \curl\alpha, \alpha\> \,\rdV > 0 \Big\}.
}
By conformal invariance, this is equivalent to the corresponding variational problem on $\R^3$.
The exact value of $S_1$ is still unknown. If the conjecture that Killing $1$-forms (equivalently, Killing vector fields) are global minimizers is true, then
\eq{
    S_1 = 2\omega_3^{\frac{1}{3}} \approx 5.405.
}
In \cite[Appendix~A]{EGP25} the authors derived a rough lower bound for $S_1$ by applying Cauchy--Schwarz to reduce to a scalar inequality and then invoking the sharp Sobolev constant for functions~\cite{Lieb83}:
\eq{
    S_1 \ge \Big( \frac{16}{\pi} \Big)^{\frac{1}{3}} \approx 1.721.
}

We improve this bound by relating the curl quotient to the sharp spinorial Sobolev inequality
\eq{\label{spin_Sobolev}
    \frac{ \Big(\int_{\S^3} \abs{\D\varphi}^{\frac{3}{2}} \,\rdV \Big)^{\frac{4}{3}} }{ \int_{\S^3} \< \D\varphi, \varphi\> \,\rdV } \ge \frac{3}{2}\omega_3^{\frac{1}{3}},
    \qquad \text{whenever } \int_{\S^3} \< \D\varphi, \varphi\> \,\rdV > 0,
}
where $\D$ denotes the Dirac operator and $\varphi$ is a spinor field. Equality holds if and only if $\varphi$ is a Killing spinor, up to composition with a conformal transformation; see~\cite{A03}.

\begin{theorem}\label{thmC.1}
$S_1 \ge \frac{3}{2}\omega_3^{1/3} \approx 4.054$.
\end{theorem}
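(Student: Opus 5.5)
The plan is to transfer the problem to $\R^3$ and turn a co-closed $1$-form into a spinor by Clifford multiplication with a \emph{parallel} spinor. On $\R^3$ such a spinor introduces no zeroth-order error, so the sharp spinorial inequality \eqref{spin_Sobolev} applies directly.

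\textbf{Step 1 (reduction to $\R^3$).} Both $J_1$ and the spinorial quotient in \eqref{spin_Sobolev} are conformally invariant. For $J_1$ this is Subsection~\ref{sec2.2}; for the Dirac quotient it is the classical covariance $\D_{u^2g}(u^{-1}\varphi)=u^{-2}\D_g\varphi$ under the usual spinor identification. Pulling back by stereographic projection, it therefore suffices to show
\[
\Big(\int_{\R^3}\abs{\curl\alpha}^{\frac32}\Big)^{\frac43}\ \ge\ \tfrac32\,\omega_3^{\frac13}\int_{\R^3}\<\curl\alpha,\alpha\>
\]
for compactly supported smooth $\alpha$ with $\rd^*\alpha=0$ and $\int\<\curl\alpha,\alpha\>>0$. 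We may assume $\rd^*\alpha=0$ by the Hodge decomposition, since $J_1$ only sees $\alpha$ modulo closed forms. The required class of $\alpha$ is then recovered by density in $W^{1,\frac32}$. The inequality \eqref{spin_Sobolev} holds on $\R^3$ with the same constant, again by conformal invariance.

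\textbf{Step 2 (the spinor).} Fix a constant (parallel) spinor $\psi$ on $\R^3$ with $\abs{\psi}=1$, and set $\varphi\coloneqq \alpha\cdot\psi$ (Clifford multiplication by $\alpha^\sharp$). Since $\nabla\psi=0$, the standard formula $\D(\alpha\cdot\psi)=(\rd\alpha+\rd^*\alpha)\cdot\psi-\alpha\cdot\D\psi-2\nabla_{\alpha^\sharp}\psi$ reduces to
\[
\D\varphi=\rd\alpha\cdot\psi .
\]
In dimension $3$ the Clifford volume element acts on spinors as $\pm1$, with the sign depending on the convention and orientation. Hence $\rd\alpha\cdot\psi=\pm(*\rd\alpha)\cdot\psi=\pm\curl\alpha\cdot\psi$.

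\textbf{Step 3 (comparing the quotients).} For real vectors $X,Y$ we use the pointwise identity
\[
\mathrm{Re}\<X\cdot\psi,Y\cdot\psi\>=\<X,Y\>\abs{\psi}^2 .
\]
It gives $\abs{\D\varphi}=\abs{\curl\alpha}$ and $\mathrm{Re}\<\D\varphi,\varphi\>=\pm\<\curl\alpha,\alpha\>$. The spinorial quotient uses the real part, as $\D$ is self-adjoint.

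If the sign comes out as $+$, then \eqref{spin_Sobolev} is literally the desired inequality. If it comes out as $-$, we replace the identification by the opposite representation of $\mathrm{Cl}_3$ (equivalently, reverse the orientation of the spinor structure), which flips the action of the volume element. This step needs care: $\curl$ depends on the orientation of $\R^3$, while the choice of Clifford module does not. So one must check that the inequivalent representation really produces $+\curl\alpha$ without altering $J_1$.

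\textbf{Main obstacle.} Step 3 is the delicate part: fixing the sign conventions consistently, and justifying the transfer of \eqref{spin_Sobolev} to $\R^3$ for the non-smooth-at-infinity spinor $\varphi=\alpha\cdot\psi$. The latter should follow from compact support of $\alpha$ and density.

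One might instead work directly on $\S^3$ with a Killing spinor, $\nabla_X\psi=\lambda X\cdot\psi$. That route produces $\D(\alpha\cdot\psi)=(\pm\curl\alpha+\lambda\alpha)\cdot\psi$, and the zeroth-order term $\lambda\alpha$ cannot be absorbed. This is exactly why the reduction to $\R^3$ in Step 1 is used.
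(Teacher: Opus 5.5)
Your argument is correct, and it takes a genuinely different route from the paper's.

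\textbf{The paper's route.} The paper stays on $\S^3$ and multiplies by a Killing spinor $\xi$ with $\nabla_X\xi=-\frac12X\cdot\xi$. This produces exactly the zeroth-order term you anticipated: $\curl\alpha\cdot\xi=(\D+\frac12)(\alpha\cdot\xi)$. Contrary to your closing remark, this term \emph{can} be absorbed. Set $\phi=(\D+\frac12)\varphi$ and use $\mathrm{spec}(\D)=\{\pm(\frac32+k)\}$. On every eigenvalue $\lambda$ one has $\frac1\lambda-\frac{1}{\lambda+1/2}=\frac{1}{2\lambda(\lambda+1/2)}>0$. Hence $\int\<(\D+\frac12)^{-1}\phi,\phi\><\int\<\D^{-1}\phi,\phi\>$, and the unshifted inequality \eqref{spin_Sobolev} gives the bound (Lemma~\ref{lemC.2}). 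What makes this comparison go the right way is the favourable sign of the shift relative to the spectrum.

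\textbf{Your route.} Flat space replaces this spectral comparison with an exact pointwise isometry. With a parallel $\psi$, the map $\alpha\mapsto\alpha\cdot\psi$ intertwines $\curl$ and $\D$ on co-closed forms, so the bound is literally \eqref{spin_Sobolev}.

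\textbf{The cost: bookkeeping on $\R^3$.} Co-closedness of $1$-forms is not conformally invariant in dimension $3$. Also, the Coulomb representative of a compactly supported form is not compactly supported. So ``compactly supported with $\rd^*\alpha=0$ by Hodge decomposition'' is not literally what you get. There are two clean fixes:
\begin{enumerate}
\item Take $\alpha=\curl v$ with $v$ compactly supported. These are co-closed, and $\{\curl\alpha\}$ is dense among divergence-free $L^{3/2}$ fields. The denominator is continuous in $\curl\alpha\in L^{3/2}$ by Hardy--Littlewood--Sobolev.
\item Alternatively, apply \eqref{spin_Sobolev} directly to $\varphi\in L^3$ with $\D\varphi\in L^{3/2}$.
\end{enumerate}

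\textbf{The sign issue you flag is harmless.} Suppose your convention gives $\rd\alpha\cdot\psi=-\curl\alpha\cdot\psi$. Then run the argument on $R^*\alpha$ for a reflection $R$ of $\R^3$. This preserves co-closedness and $\norm{\curl\alpha}_{3/2}$, but flips the sign of $\int\<\curl\alpha,\alpha\>$. Equivalently, the spinorial inequality holds with $\abs{\int\<\D\varphi,\varphi\>}$ on the right.

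\textbf{What each route buys.} The paper's route additionally identifies the exact value of the shifted quotient and shows it is not attained. Yours reaches the same constant $\frac32\omega_3^{1/3}$ more transparently.
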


The proof starts from the following observation. Let $\xi$ be a unit-length Killing spinor on $\S^3$, satisfying
\eq{
    \nabla_X\xi = -\frac{1}{2}X\cdot\xi, \quad\forall X\in\Gamma(T\S^3).
}
In particular,
\eq{
    \D\xi = \sum_{i=1}^3 e_i\cdot\nabla_{e_i}\xi = \frac{3}{2}\xi.
}
Using the elementary formula for any $1$-form $\alpha$ and any spinor field $\varphi$
\eq{
    \D(\alpha\cdot\varphi) = -\alpha\cdot\D\varphi + (\rd+\rd^*)\alpha\cdot\varphi - 2\nabla_{\alpha^\sharp}\varphi,
}
we obtain that for any co-closed $1$-form $\alpha$,
\eq{
    \D(\alpha\cdot\xi) = -\frac{1}{2}\alpha\cdot\xi + \rd\alpha\cdot\xi.
}
Moreover, in dimension $3$ we have $\rd\alpha\cdot\xi = *\rd\alpha\cdot\xi$ (this is a standard feature of the irreducible real spin representation; see, for instance, \cite[Chapter~1, Proposition~5.9]{Lawson} or \cite[Proposition~1.30]{Bourguignon2015}).
Consequently,
\eq{
    \curl\alpha\cdot\xi = (\D+\frac{1}{2})(\alpha\cdot\xi).
}
Taking length yields
\eq{
    \abs{\curl\alpha} = \abs{(\D+\frac{1}{2})(\alpha\cdot\xi)},
}
and moreover
\eq{
    \<\curl\alpha,\alpha\> = \<\curl\alpha\cdot\xi,\alpha\cdot\xi\> = \<(\D+\frac{1}{2})(\alpha\cdot\xi),\alpha\cdot\xi\>.
}
Therefore
\eq{\label{eq_key_relation}
    \frac{ \Big(\int_{\S^3} \abs{\curl\alpha}^{\frac{3}{2}} \,\rdV \Big)^{\frac{4}{3}} }{ \int_{\S^3} \< \curl\alpha, \alpha\> \,\rdV } = \frac{ \Big(\int_{\S^3} \abs{(\D+\frac{1}{2})(\alpha\cdot\xi)}^{\frac{3}{2}} \,\rdV \Big)^{\frac{4}{3}} }{ \int_{\S^3} \< (\D+\frac{1}{2})(\alpha\cdot\xi), \alpha\cdot\xi\> \,\rdV }.
}
Thus the original problem is reduced to a Sobolev-type quotient for the shifted Dirac operator $\D+\frac{1}{2}$. While $\D$ is conformally covariant, the shift breaks conformal covariance; nevertheless, we can still compute the corresponding sharp constant.

\begin{lemma}\label{lemC.2}
On $\S^3$ there holds
\eq{
    \inf \Big\{ \frac{ \Big(\int \abs{(\D+\frac{1}{2})\varphi}^{\frac{3}{2}} \,\rdV \Big)^{\frac{4}{3}} }{ \int \< (\D+\frac{1}{2})\varphi, \varphi\> \,\rdV } \,\Big|\, \int \< (\D+\frac{1}{2})\varphi, \varphi\> \,\rdV > 0 \Big\} = \frac{3}{2}\omega_3^{\frac{1}{3}}.
}
\end{lemma}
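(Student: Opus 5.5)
We prove two inequalities: the infimum is at most $\frac32\omega_3^{1/3}$ (attained at a Killing spinor), and it is at least $\frac32\omega_3^{1/3}$ (via the sharp spinorial inequality \eqref{spin_Sobolev}).

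\emph{Upper bound.} Take $\varphi=\xi$ a unit Killing spinor. Then $\D\xi=\frac32\xi$, so $(\D+\frac12)\xi=2\xi$ and $|(\D+\frac12)\xi|\equiv 2$. The quotient equals
\[
\frac{(2^{3/2}\omega_3)^{4/3}}{2\omega_3}=\frac{4\omega_3^{4/3}}{2\omega_3}=2\omega_3^{1/3}.
\]
This exceeds $\frac32\omega_3^{1/3}$, so the Killing spinor does not give the upper bound. Instead, use a concentrating family of conformal images $\varphi_b$ of a Killing spinor, with $b\to\partial\mathbb B^4$. For these, the spinorial quotient stays at $\frac32\omega_3^{1/3}$ by conformal covariance of $\D$.

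The shift contributes $\frac12\int|\varphi_b|^2$ to the denominator. Conformal weights give $|\varphi_b|\sim u^{1}$, where $u^3$ is a concentrating volume density, so $\int|\varphi_b|^2\sim\int u^2\to0$. In the numerator, $\|(\D+\frac12)\varphi_b\|_{3/2}\le\|\D\varphi_b\|_{3/2}+\frac12\|\varphi_b\|_{3/2}$, and $\|\varphi_b\|_{3/2}\lesssim\|\varphi_b\|_{2}\to0$. Meanwhile $\|\D\varphi_b\|_{3/2}$ and $\int\langle\D\varphi_b,\varphi_b\rangle$ stay fixed. Hence the shifted quotient tends to $\frac32\omega_3^{1/3}$, and the infimum is at most this value.

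\emph{Lower bound.} Expand in eigenspinors of $\D$ on $\S^3$. The eigenvalues are $\pm(\frac32+k)$, so the spectrum of $\D+\frac12$ is $\{2,3,\dots\}\cup\{-1,-2,\dots\}$. In particular $\D+\frac12$ is invertible. Given $\varphi$, write $\psi=(\D+\frac12)\varphi$, and let $\tilde\varphi=\D^{-1}\psi$ be the spinor with $\D\tilde\varphi=\psi$. Both quotients share the numerator $\|\psi\|_{3/2}^2$. It therefore suffices to compare the denominators and show
\[
\langle(\D+\tfrac12)^{-1}\psi,\psi\rangle_{L^2}\le\langle\D^{-1}\psi,\psi\rangle_{L^2}.
\]
Then \eqref{spin_Sobolev} applied to $\tilde\varphi$ gives the claim.

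The comparison is done spectrally, eigenvalue by eigenvalue, using $\mu$ for an eigenvalue of $\D$:
\begin{itemize}
\item For $\mu>0$: $\frac{1}{\mu+1/2}<\frac1\mu$.
\item For $\mu\le-\frac32$: $\mu+\frac12\le -1$. Both reciprocals are negative, and $\frac1{\mu+1/2}\le\frac1\mu$ holds because $\mu<\mu+\frac12<0$ gives $\frac1{\mu+1/2}<\frac1\mu$.
\end{itemize}
So the inequality holds on every eigenspace. Whenever the shifted denominator is positive, the unshifted one is at least as large, so the shifted quotient is at least the unshifted one, which is at least $\frac32\omega_3^{1/3}$.

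\emph{Main obstacle.} The delicate step is the upper bound. It requires checking carefully that the $L^2$ norm of the conformal images of a Killing spinor vanishes as the bubble concentrates, so that the shift becomes negligible. This depends on the exact weight $\varphi\mapsto u^{-(n-1)/2}\varphi$ and the corresponding integrability exponents for $n=3$.
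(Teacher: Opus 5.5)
Your proposal is correct and follows essentially the same route as the paper. For the lower bound, both you and the paper use the spectral comparison $\D^{-1}-(\D+\frac12)^{-1}>0$ together with the sharp spinorial inequality; you check it eigenvalue by eigenvalue, where the paper only says ``one checks''. For the upper bound, both use concentrating conformal images $\varphi_\epsilon=u_\epsilon\tilde\xi$ of a Killing spinor, whose $L^p$ norms for $p<3$ tend to zero, so the shift by $\frac12$ becomes negligible in both numerator and denominator. Your observation that the Killing spinor itself only gives $2\omega_3^{1/3}$ is a correct check that explains why a concentrating family is needed.
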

\begin{proof}
Let us denote the infimum by $A$. Note that on $\S^3$,
\eq{
    {\rm spec}(\D) = \{ \pm(\frac{3}{2}+k) \,|\, k\ge0 \}. 
}
Since $\D+\frac{1}{2}$ is invertible, we may rewrite
\eq{
    A = \inf \Big\{ \frac{ \Big(\int \abs{\varphi}^{\frac{3}{2}} \,\rdV \Big)^{\frac{4}{3}} }{ \int \< (\D+\frac{1}{2})^{-1}\varphi, \varphi\> \,\rdV } \,\Big|\, \int \< (\D+\frac{1}{2})^{-1}\varphi, \varphi\> \,\rdV > 0 \Big\}.
}
One checks that
\eq{
    \D^{-1} - (\D+\frac{1}{2})^{-1} > 0,
}
which yields the strict comparison
\eq{\label{ineq_strict}
    \int \< (\D+\frac{1}{2})^{-1}\varphi, \varphi\> \,\rdV < \int \< \D^{-1}\varphi, \varphi\> \,\rdV.
}
Consequently,
\eq{
    A \ge \frac{ \Big(\int \abs{\varphi}^{\frac{3}{2}} \,\rdV \Big)^{\frac{4}{3}} }{ \int \< \D^{-1}\varphi, \varphi\> \,\rdV } \ge \frac{3}{2}\omega_3^{\frac{1}{3}},
}
where the last inequality is equivalent to \eqref{spin_Sobolev}.

To obtain the matching upper bound, we construct a minimizing sequence via degenerating conformal transformations. Recall \eqref{eq_conformal_transformation}, and choose parameters $b_\epsilon\to N$ so that the conformal maps $\Xi_\epsilon\in{\rm Conf}(\S^3,g)$ degenerate to the point $N$ as $\epsilon\to0$. Recall the Killing spinor $\xi$ satisfies
\eq{\label{eq_conformal_Killing}
    \D\xi=\frac{3}{2}\xi, \quad \abs{\xi}=1.
}
Let $\tilde{g}=\Xi_\epsilon^*g=u_\epsilon^2g$ and $\tilde{\xi}\coloneqq\Xi_\epsilon^*\xi$. Then
\eq{
    \D_{\tilde{g}}\tilde{\xi}=\frac{3}{2}\tilde{\xi}, \quad \abs{\tilde{\xi}}_{\tilde{g}}=1.
}
View $\varphi_\epsilon=u_\epsilon\tilde{\xi}$ as a spinor associated to $g$ via the isometry between the spinor bundles $\Sigma_{\tilde{g}}$ and $\Sigma_g$. The conformal covariance of $\D$ implies
\eq{
    \D_g\varphi_\epsilon = u_\epsilon^2\D_{\tilde{g}}\tilde{\xi} = \frac{3}{2}u_\epsilon\varphi_\epsilon, \quad \abs{\varphi_\epsilon}_g=u_\epsilon.
}
It follows
\eq{
    \int \abs{\varphi_\epsilon}^3 \rdV_g = \int u_\epsilon^3 \rdV_g = \int \rdV_{\Xi_\epsilon^*g} = \int \Xi_\epsilon^*\rdV_g = \int \rdV_g = \omega_3.
}
Since $\Xi_\epsilon$ degenerates to a point as $\epsilon\to0$, $u_\epsilon$ uniformly converges to $0$ on each compact set of $\S^3\backslash\{N\}$. We claim that
\eq{
    \norm{\varphi_\epsilon}_p \to 0 \quad\hbox{as}\quad \epsilon\to0 \quad\hbox{for any}\quad p<3.
}
In fact, consider $\S^3 = (\S^3\backslash B_{\delta}(N))\cup B_{\delta}(N)$. The uniform convergence implies $\norm{\varphi_\epsilon}_{L^p(\S^3\backslash B_{\delta}(N))} \to 0$, while H\"older's inequality implies
\eq{
    \int_{B_\delta(N)}\abs{\varphi_\epsilon}^p\, \rdV_g = \int_{B_\delta(N)}u_\epsilon^p\, \rdV_g \le \left(\int_{\S^3}u_\epsilon^3\,\rdV_g\right)^{\frac{p}{3}} \rVol(B_\delta(N))^{1-\frac{p}{3}}.
}
Let $\delta$ be arbitrarily small and the claim follows.
The Minkowski inequality then gives
\eq{
    \Big| \norm{\D\varphi_\epsilon+\frac{1}{2}\varphi_\epsilon}_{\frac{3}{2}} - \norm{\D\varphi_\epsilon}_{\frac{3}{2}} \Big| \le \frac{1}{2}\norm{\varphi_\epsilon}_{\frac{3}{2}} \to 0 \quad\hbox{as}\ \epsilon\to0,
}
where
\eq{
    \norm{\D\varphi_\epsilon}_{\frac{3}{2}} = \frac{3}{2}\norm{\varphi_\epsilon}_3^2 = \frac{3}{2}\omega_3^{\frac{2}{3}}.
}
Hence
\eq{
    \norm{\D\varphi_\epsilon+\frac{1}{2}\varphi_\epsilon}_{\frac{3}{2}} \to \frac{3}{2}\omega_3^{\frac{2}{3}}\quad\hbox{as}\ \epsilon\to0. 
}
As a consequence,
\eq{
    \frac{ \Big(\int \abs{(\D+\frac{1}{2})\varphi_\epsilon}^{\frac{3}{2}} \,\rdV \Big)^{\frac{4}{3}} }{ \int \< (\D+\frac{1}{2})\varphi_\epsilon, \varphi_\epsilon\> \,\rdV } = \frac{ \norm{\D\varphi_\epsilon+\frac{1}{2}\varphi_\epsilon}_{\frac{3}{2}}^2 }{ \frac{3}{2}\omega_3 + \frac{1}{2}\norm{\varphi_\epsilon}_2^2 } \to \frac{3}{2}\omega_3^{\frac{1}{3}}.
}

\end{proof}

\begin{remark}
The infimum in Lemma~\ref{lemC.2} is not attained, because \eqref{ineq_strict} is strict for every admissible $\varphi$.

\noindent (2) The same argument shows that the conclusion remains valid if $\frac{1}{2}$ is replaced by any $a\in(0,\frac{3}{2})$. Moreover, the statement extends to higher dimensions.
\end{remark}

Finally, we prove Theorem~\ref{thmC.1}.

\medskip

\begin{proof}[Proof of Theorem~\ref{thmC.1}]
This is a consequence of \eqref{eq_key_relation} and Lemma~\ref{lemC.2}.
\end{proof}

Since the curl operator shares several features with the Dirac operator, it is natural to ask whether the argument above can be refined to yield an affirmative answer to this conjecture.

\medskip

\noindent\textsc{Acknowledgments.} G.W. would like to thank Professor Rupert Frank for helpful discussions.
M.Z. was supported by the China Scholarship Council (CSC), Grant No.~202306270117.

\bigskip

\noindent\textsc{Data availability.}
No new data were created or analyzed in this study; data sharing is not applicable.

\medskip

\noindent\textsc{Competing interests.}
The authors declare that they have no competing interests.

\printbibliography

\end{document}